\documentclass[12pt,reqno]{amsart}
\pdfoutput=1
\usepackage{etoolbox} 
\patchcmd{\subsubsection}{\normalfont}{\bfseries\textnormal}{}{} 

\usepackage[vmargin=2cm,hmargin=2cm]{geometry} 

\usepackage{amssymb, latexsym, color, etaremune, enumerate, tikz}
\usepackage{caption,float,textcomp,units,xfrac,url,tabularx}
\usepackage{placeins}

\usetikzlibrary{decorations.markings}
\captionsetup[table]{position=bottom}




\usepackage{microtype} 
\usepackage[english]{babel} 
\hyphenation{Lan-gua-sco}

\newtheorem{Thm}{Theorem}

\newtheorem{Lem}{Lemma}

\newtheorem{Cor}{Corollary}
\newtheorem{Con}{Conjecture}
\newtheorem{Prop}{Proposition} 

\theoremstyle{definition}
\newtheorem{Rem}{Remark}

\newtheorem{Def}{Definition}
\newtheorem{Example}{Example}

\DeclareMathOperator{\lcm}{lcm} 
\DeclareMathOperator{\ord}{ord}

\newcommand{\Q}{\mathbb{Q}}

\newcommand{\mc}{\mathcal}

\makeatletter
\let\@@pmod\pmod
\DeclareRobustCommand{\pmod}{\@ifstar\@pmods\@@pmod}
\def\@pmods#1{\mkern4mu({\operator@font mod}\mkern 6mu#1)}
\makeatother

\makeatletter 
\newcommand*\kronecker[2]{%
  \relax\if@display
    \expandafter{(\frac{#1}{#2})}
  \else
    \expandafter{\bigl(\frac{#1}{#2}\bigr)}
  \fi
}
\makeatother


\begin{document}
\title[The ubiquity of Ramanujan-style congruences]{Prime divisors of $\ell$-Genocchi numbers and 
the ubiquity of Ramanujan-style congruences of level $\ell$}

\author[P. Moree]{Pieter Moree}
\address{Max-Planck-Institut f\"ur Mathematik, Vivatsgasse 7, D-53111 Bonn, Germany}
\email{moree@mpim-bonn.mpg.de}

\author[P. Sgobba]{Pietro Sgobba}
\address{University of Luxembourg, Department of Mathematics, 6 Avenue de la Fonte, L-4364 Esch-sur-Alzette, Luxembourg}
\email{pietro.sgobba@uni.lu}

\subjclass[2020]{Primary 11A07, 11B68; secondary 11F33} 
\keywords{$\ell$-Genocchi numbers, 
$\ell$-regularity, Ramanujan type congruences, Artin's primitive root conjecture}

\maketitle

\begin{abstract}
Let $\ell$ be any fixed prime number.
We define the
$\ell$-Genocchi numbers by 
$G_n:=\ell(1-\ell^n)B_n$, with $B_n$ the $n$-th Bernoulli number. 
They are integers.
We introduce and study a variant of Kummer's notion of regularity of primes. 
We say that an odd prime $p$ is $\ell$-Genocchi irregular 
if it divides at least one of the 
$\ell$-Genocchi numbers $G_2,G_4,\ldots, G_{p-3}$, 
and $\ell$-regular otherwise. 
With the help of techniques used
in the study of Artin's primitive root conjecture,
we give asymptotic estimates
for the number of $\ell$-Genocchi irregular primes in 
a prescribed arithmetic progression in case $\ell$ is odd.
The case $\ell=2$ was already dealt with
by Hu, Kim, Moree and Sha (2019).
\par  Using similar methods we study the prime factors
of
$(1-\ell^n)B_{2n}/2n$ and $(1+\ell^n)B_{2n}/2n$. 
This
allows us to estimate the number of primes $p\le x$
for which there exist modulo $p$ Ramanujan-style congruences  
between the Fourier coefficients of an Eisenstein series
and some cusp form of prime 
level $\ell$. 
\end{abstract}

\section{Introduction}
Recall that the $n$-th Bernoulli number $B_n$ 
 is  implicitly defined as the coefficient of 
$t^n$ in the generating function 
\begin{equation}
\label{Bgen}
\frac{t}{e^t-1}=\sum_{n=0}^{\infty}B_{n}\frac{t^n}{n!}. 
\end{equation}
The Bernoulli numbers are rational. 
It is easy to see that $B_0=1,B_1=-1/2$ and $B_{2n+1}=0$ for $n\ge 1$.
By the \emph{von Staudt-Clausen
theorem} (see for example \cite[Chp.\,3]{AIK}) the remaining Bernoulli numbers satisfy
\begin{equation}
\label{vsclausen}
B_{2n}+\sum_{p-1\mid 2n}\frac{1}{p}\in \mathbb Z,
\end{equation}
where the sum is over the primes $p$ for which $p-1$ divides $2n$, and
thus their denominators are well understood. However, their numerators are
far less so. We say that an odd prime $p$ is  \textit{$B$-irregular}  
if $p$ divides the numerator of at least one of the Bernoulli numbers $B_{2},B_{4},\ldots,B_{p-3}$, 
and \textit{$B$-regular} otherwise. 
This notion has an important application in algebraic number theory. Let $\mathbb{Q}(\zeta_{p})$ with $\zeta_p=e^{2\pi i/p}$ be the $p$-th cyclotomic field, 
and $h_p$ its class number. 
\begin{Thm}[Kummer]
\label{thm:Kummer}
Let $p$ be an odd prime.
If $p\nmid h_p$, then
the Fermat equation $x^p+y^p=z^p$ does not have 
a solution in positive integers
$x,y,z$ with $p$ coprime to $xyz$. 
An odd prime $p$ is $B$-regular if and only if $p\nmid h_p$.
\end{Thm}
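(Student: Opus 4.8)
The plan is to prove the two assertions separately: the implication that $p\nmid h_p$ excludes solutions of $x^p+y^p=z^p$ with $p\nmid xyz$ (the \emph{first case} of Fermat), and the equivalence between $B$-regularity and the condition $p\nmid h_p$, which is Kummer's criterion. Throughout I would work in the ring of cyclotomic integers $\Z[\zeta_p]$, the full ring of integers of $\Q(\zeta_p)$ and a Dedekind domain, and freely use unique factorization of ideals there.

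For the Fermat implication, suppose $x^p+y^p=z^p$ with $p\nmid xyz$; after dividing out common factors I may assume $x,y,z$ pairwise coprime, and I use the factorization
\[
\prod_{i=0}^{p-1}(x+\zeta_p^i y)=z^p
\]
in $\Z[\zeta_p]$. The first step is to verify that the ideals $(x+\zeta_p^i y)$ are pairwise coprime; the hypothesis $p\nmid xyz$ is exactly what prevents the prime above $p$, generated by $1-\zeta_p$, from dividing two of them. Unique factorization then forces each factor to be a $p$-th power of an ideal, say $(x+\zeta_p y)=\mathfrak a^p$. Here regularity enters: the class $[\mathfrak a]$ has order dividing both $p$ and $h_p$, hence is trivial, so $\mathfrak a=(\alpha)$ and $x+\zeta_p y=u\,\alpha^p$ for some unit $u$. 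I would then invoke Kummer's lemma that every unit of $\Z[\zeta_p]$ is $\zeta_p^r$ times a totally real unit, together with the elementary congruence $\alpha^p\equiv a\pmod p$ for some $a\in\Z$. Comparing $x+\zeta_p y=u\,\alpha^p$ with its complex conjugate modulo $p$ and using the $\Q$-independence of $1,\zeta_p,\dots,\zeta_p^{p-2}$ yields $x\equiv y\pmod p$; applying the same reasoning to the equivalent equation $x^p+(-z)^p=(-y)^p$ gives $x\equiv -z\pmod p$. Combined with $x+y\equiv z\pmod p$ from Fermat's little theorem, this forces $3x\equiv 0\pmod p$, contradicting $p\nmid xyz$ once $p>3$ (the case $p=3$ being classical).

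For the equivalence, I would pass to the analytic class number formula and the factorization $h_p=h_p^+h_p^-$, with $h_p^+$ the class number of the maximal real subfield and $h_p^-$ an integer. Expressing $h_p^-$ through Dirichlet $L$-values gives the closed form
\[
h_p^-=2p\prod_{\chi\ \mathrm{odd}}\Bigl(-\tfrac12\,B_{1,\chi}\Bigr),
\]
the product over the odd Dirichlet characters modulo $p$, with $B_{1,\chi}$ generalized Bernoulli numbers. Writing $\omega$ for the Teichm\"uller character, the odd characters are the $\omega^{k-1}$ with $k$ even, $2\le k\le p-1$, and the congruence $B_{1,\omega^{k-1}}\equiv B_k/k\pmod p$ identifies the factors of $h_p^-$ modulo $p$ with the numerators of $B_2/2,\dots,B_{p-3}/(p-3)$ (the term $k=p-1$ being a $p$-adic unit by \eqref{vsclausen}). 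Thus $p\mid h_p^-$ if and only if $p$ is $B$-irregular. To finish I must rule out the plus part contributing new cases: $p\mid h_p^+$ implies $p\mid h_p^-$, which is the content of the reflection theorem (Spiegelungssatz) relating the $\omega^i$- and $\omega^{1-i}$-components of the $p$-part of the class group. Hence $p\mid h_p\iff p\mid h_p^-\iff p$ is $B$-irregular.

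The ideal bookkeeping in the Fermat argument is routine once $\Z[\zeta_p]$ is in hand; the genuine obstacles are the three structural inputs --- Kummer's lemma on cyclotomic units, the analytic class number formula together with the Bernoulli congruence, and the reflection theorem. I expect the class number formula and the congruence $B_{1,\omega^{k-1}}\equiv B_k/k$ to be the crux, as this is the step that actually translates the transcendental $L$-values measuring $h_p^-$ into the arithmetic condition on Bernoulli numerators; by comparison the Fermat implication is largely self-contained once regularity supplies the principal generator.
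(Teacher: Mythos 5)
The paper does not prove this statement at all: it is presented as a classical theorem of Kummer, with the reader referred to Borevich--Shafarevich for a modern $p$-adic proof, so there is no internal argument to compare yours against. Your sketch is the standard textbook route (Ireland--Rosen, Washington): (a) the cyclotomic factorization $\prod_i(x+\zeta_p^i y)=z^p$, pairwise coprimality of the ideal factors under $p\nmid xyz$, principalization of $\mathfrak a$ via $p\nmid h_p$, the unit decomposition, and the congruence case analysis leading to $3x\equiv 0\pmod*{p}$; (b) the formula $h_p^-=2p\prod_{\chi\,\mathrm{odd}}(-\tfrac12 B_{1,\chi})$ together with $B_{1,\omega^{k-1}}\equiv B_k/k\pmod*{p}$ for even $k\le p-3$; (c) the Spiegelungssatz to get $p\mid h_p^+\Rightarrow p\mid h_p^-$. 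This is structurally sound and is essentially the proof the paper's references contain.

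Three details of your sketch deserve tightening. First, the unit lemma you use (every unit of $\Z[\zeta_p]$ is $\zeta_p^r$ times a totally real unit) is not what is usually called Kummer's Lemma; Kummer's Lemma proper (a unit congruent to a rational integer mod $p$ is a $p$-th power of a unit) is only needed for the \emph{second} case of Fermat, which this statement does not assert, so you are invoking the right (and weaker) fact under the wrong name. Second, the factor of $h_p^-$ corresponding to $k=p-1$, i.e.\ $\chi=\omega^{p-2}=\omega^{-1}$, is \emph{not} itself a $p$-adic unit: $B_{1,\omega^{-1}}$ has $p$-adic valuation $-1$, and it is the product of this factor with the prefactor $2p$ that is a unit (this is where von Staudt--Clausen enters); as written, your accounting of $\nu_p(h_p^-)$ would be off by one. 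Third, the ``$\Q$-independence'' step should really be $\mathbb{F}_p$-linear independence of the images of $1,\zeta_p,\dots,\zeta_p^{p-2}$ in $\Z[\zeta_p]/(p)$, and the case analysis on the exponent $r$ (only the case $2r\equiv1\pmod*{p}$ survives, yielding $x\equiv y\pmod*{p}$) needs $p\ge5$ so that the four relevant powers of $\zeta_p$ leave a coefficient free; this is consistent with your separate treatment of $p=3$, but it is the place where a written-out proof must do actual work.
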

This result of Kummer is considered to be one of the highlights of
19th century number theory. 
In the process of proving it, Kummer developed a lot of algebraic number
theory including the notion of ideals. For a more modern proof of 
Theorem \ref{thm:Kummer} using $p$-adic 
methods, see the book by Borevich and Shafarevich \cite{BS}.
\par In 1915, Jensen (as a student!) proved that there are infinitely many irregular
primes (see, e.g., \cite[p.\,381]{BS} 
or \cite[p.\,20]{Murty}).
Unfortunately the same is not known for regular primes, although numerical evidence indicates that
about 61\% of all primes are regular. This number is easily explained using an heuristical argument due to 
Siegel \cite{Siegel}. 
Assuming that the divisor
structure of Bernoulli denominators is random, we expect that 
$p\nmid B_k$ with probability $1-1/p$. We thus might expect that $p$ is regular with probability
$(1-1/p)^{(p-3)/2}$, which as $p$ gets large tends to $e^{-1/2}=0.60653\ldots$. 
\begin{Con}[Siegel]
\label{sheuristics}
The number of $B$-regular primes up to $x$ is given asymptotically by
\[ \frac{x}{\sqrt{e}\log x}. \]
\end{Con}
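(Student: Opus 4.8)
The plan is to read Conjecture~\ref{sheuristics} as the rigorous shadow of the probabilistic model sketched above, and to separate cleanly the \emph{analytic} step (passing from a per-prime probability to a counting asymptotic) from the \emph{arithmetic} step (justifying the model). Write
\[
R(x):=\#\{\,p\le x:\ p\ \text{is }B\text{-regular}\,\},
\]
and set $f(p):=(1-1/p)^{(p-3)/2}$ for Siegel's heuristic probability that a given prime $p$ is regular, so that the model asserts $R(x)\approx\sum_{p\le x}f(p)$. The first, and entirely elementary, step is to compute the limiting value of $f$. Taking logarithms and expanding,
\[
\log f(p)=\frac{p-3}{2}\log\!\Bigl(1-\frac1p\Bigr)=\frac{p-3}{2}\Bigl(-\frac1p-\frac{1}{2p^2}-\cdots\Bigr)=-\frac12+O\!\Bigl(\frac1p\Bigr),
\]
whence $f(p)\to e^{-1/2}=1/\sqrt e$ as $p\to\infty$, matching the numerical $61\%$.

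Next I would convert this pointwise limit into the claimed asymptotic by partial summation against the prime number theorem. Since $f(p)-e^{-1/2}\to0$, for every $\varepsilon>0$ there is a $P$ with $|f(p)-e^{-1/2}|<\varepsilon$ for all $p>P$; splitting the sum at $P$ gives
\[
\Bigl|\sum_{p\le x}f(p)-e^{-1/2}\pi(x)\Bigr|\le C_P+\varepsilon\,\pi(x),
\]
and dividing by $\pi(x)\to\infty$ shows $\sum_{p\le x}f(p)=\bigl(e^{-1/2}+o(1)\bigr)\pi(x)$. Combining this with $\pi(x)\sim x/\log x$ yields the predicted
\[
R(x)\sim e^{-1/2}\pi(x)\sim\frac{1}{\sqrt e}\cdot\frac{x}{\log x}=\frac{x}{\sqrt e\log x}.
\]

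The hard part — indeed the part that keeps this a conjecture rather than a theorem — is the arithmetic step: justifying the identity $R(x)\approx\sum_{p\le x}f(p)$ itself. This rests on two unproven assumptions, namely that for fixed $p$ the residues $B_{2k}\bmod p$ are equidistributed, so that $p\mid B_{2k}$ carries ``probability'' $1/p$, and that the $(p-3)/2$ divisibility events for $k=1,\ldots,(p-3)/2$ are quasi-independent, which is exactly what produces the product $(1-1/p)^{(p-3)/2}$. No method is known to control the \emph{joint} distribution of Bernoulli numerators modulo $p$ at this level of precision; this is precisely why the infinitude of regular primes remains open, in sharp contrast to the irregular case settled by Jensen, where a \emph{single} divisibility suffices and one needs only existence rather than an intersection of many near-independent events. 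The Artin-primitive-root machinery exploited elsewhere in this paper governs divisibility by one prime at a time and so is well suited to counting irregular primes, but it does not reach the fine correlations between successive $B_{2k}\bmod p$ that would be required to upgrade the regular-prime heuristic into a proof.
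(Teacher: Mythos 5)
Your proposal matches the paper's treatment: the statement is a conjecture, and the paper's only ``proof'' is exactly Siegel's heuristic you reproduce --- the per-prime regularity probability $(1-1/p)^{(p-3)/2}\to e^{-1/2}$, summed against the prime number theorem, with the randomness/quasi-independence of the residues $B_{2k}\bmod p$ left as the unprovable modelling assumption. Your explicit partial-summation step and your discussion of why the arithmetic input is out of reach are correct elaborations of what the paper leaves implicit, so you have taken essentially the same route.
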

In this paper we study the divisibility of some sequences
related to Bernoulli numbers. As usual $\nu_p(a)$ denotes the
exponent of $p$ in the prime factorization of the rational number $a$.
\begin{Def}[Divisibility of a sequence]
Let $A=\{a_n\}_{n=1}^{\infty}$ be a sequence of rational numbers. We say that a prime $p$ divides the sequence $A$ if there exists $n\ge 1$ such that $a_n\ne 0$ and $\nu_p(a_n)\ge 1$. The set of prime
divisors of $A$ we denote by
${\mathcal Q}_A$. 
\end{Def}

Throughout, if $\mc S$ is a set of prime numbers, then we denote $\#\{p\le x:p\in \mc S\}$ by $\mc S(x)$.

It is a consequence of 
\eqref{vsclausen} and
the Kummer congruences, that a prime $p$ divides the sequence of Bernoulli numbers if and only if it is 
$B$-irregular. Recall that the \emph{Kummer congruence} (cf.\,Murty \cite[pp.\,18-19]{Murty}) 
implies that
\begin{equation}
    \label{Kummercongruence}
    \frac{B_j}{j}\equiv \frac{B_k}{k}\pmod*{p},~{\rm ~with~}j\equiv k\not\equiv 0\pmod*{p-1}.
\end{equation}

 The aim of this paper is to study the prime divisors of the
sequences $\{H_{2n}\}_{n=1}^{\infty}$, $\{H_{2n}^{-}\}_{n=1}^{\infty}$ and $\{H_{2n}^{+}\}_{n=1}^{\infty}$ with
\begin{equation}
\label{notation}    
H_{2n}: = (1-\ell^{2n})\frac{B_{2n}}{2n},\qquad H_{2n}^-:=(1-\ell^{n})\frac{B_{2n}}{2n}, \qquad  H_{2n}^+:=(1+\ell^{n})\frac{B_{2n}}{2n}. 
\end{equation}
In this context the notion of
irregularity will play an important role. 
\begin{Def}[(Ir)regularity]
Given a sequence of rational numbers $\{A_{k}\}_{k=1}^{\infty}$, we say that $p>3$ is $A$-regular if all of $\nu_p(A_2),\ldots,\nu_p(A_{p-3})$ are 
non-positive, and $A$-irregular otherwise. The prime $3$ is defined to be $A$-regular. The set of $A$-irregular primes is denoted by $\mc P_A$.
\end{Def}
Sometimes we use $H_{2n}^{-1}$ and $H_{2n}^{1}$ instead
of $H_{2n}^-$, respectively $H_{2n}^+$. Thus the final two entries in \eqref{notation} can be more compactly 
written as 
\[
H_{2n}^{\varepsilon}:=(1+\varepsilon\ell^{n})\frac{B_{2n}}{2n}, \qquad   
\varepsilon\in\{-1,1\}. 
\]
The number
$-H_k^{\varepsilon}/2$ occurs 
for even $k$ as constant term in the 
Fourier expansion of a generalization 
$E_{k,\ell}^{\varepsilon}(z)$
(given by \eqref{ES})
of the 
classical weight $k$ Eisenstein series $E_k(z)$ to the prime level $\ell$ 
setting. In case 
modulo a prime $p$ this constant is zero, the Eisenstein series 
$E_{k,\ell}^{\varepsilon}(z)$ is possibly coefficient wise congruent to a cusp form leading to a
congruence of Ramanujan type (such as \eqref{RC}).
In Sect.\,\ref{sec:hplusmincounting} we consider, given
 $\ell$ and $\varepsilon$, for how
many primes $p\le x$ there exists at least one Eisenstein
series 
$E_{k,\ell}^{\varepsilon}(z)$ such that 
modulo $p$ its constant term
$-H_k^{\varepsilon}/2$ is zero.
In the next section we discuss this modular form connection in more detail.

\subsection{Ramanujan style congruences for prime level $\ell$}
	Let $E_k$ be the Eisenstein series of even weight $k\ge  2$ for the group $SL_2({\mathbb Z})$,
	normalized so that its Fourier series expansion is
	$$E_k(z)=-\frac{B_k}{2k}+\sum_{n=1}^{\infty}\sigma_{k-1}(n)e^{2\pi i n z},$$
	where $\sigma_r(n) = \sum_{d|n}d^{r}$ is the
	$r$-th sum of divisors function.
	The prototype of a Ramanujan congruence goes back 
	to 1916 and asserts that
	\begin{equation}\label{RC}
		\tau(n)\equiv \sigma_{11}(n)  \pmod*{691},
	\end{equation}
	for every positive integer $n$. This can be viewed as a (coefficient-wise) congruence between the unique cusp form $\Delta(z)=\sum_{n=1}^{\infty}\tau(n)e^{2\pi i n z}$ of weight $12$ and the Eisenstein series 
	$E_{12}(z)$, namely
	$	\Delta \equiv E_{12}  \pmod*{691}$.
	Note that $691$ divides $B_{12}$.
	There are several well-known ways to prove, interpret, and 
	generalize this. Here we will only focus on a generalization to prime level
	$\ell$, where the associated Eisenstein series,
	for $\varepsilon\in \{\pm 1\}$, even weight $k\ge 2$ and prime level $\ell$, 
	is
	\begin{equation}\label{ES}
		E_{k,\ell}^{\varepsilon}(z)=E_k(z)+{\varepsilon}\ell^{k/2}E_k(\ell z).
	\end{equation}
	Notice that the constant term in the Fourier series of $E_{k,\ell}^{\varepsilon}(z)$ equals
	$-H_k^{\varepsilon}/2$.
	Kumar et al.\,\cite{KKMK} recently established the following result.
Throughout this article, given a rational number $a$, by $p\mid a$, we mean that the prime number $p$ 
divides the reduced numerator of $a$, that is $\nu_p(a)\ge 1$. 
\begin{Thm}\label{eigenform_varepsilon}
	Let $k$ be an even natural integer, $\ell$ and $p$ be primes. Let
$S_k^{\varepsilon}(\ell)$ denote the $\varepsilon$-eigenspace of the Atkin-Lehner operator $W_p$ inside $S_k(\ell)$, the space of modular 
cusp forms of weight $k$ for the group $\Gamma_0(\ell)$.\\
{\rm i)}	Suppose that $p\ge 5$ divides  $H_k$ for some even integer $k\ge 4$.
	Then there exists $\varepsilon \in \{\pm  1\}$ and a normalized eigenfunction $f \in S_k^\varepsilon(\ell)$ for all Hecke
	operators $T_q$ with $q\neq \ell$ a prime, and a prime ideal $\mathfrak p$ over $p$ in the coefficient field of $f$ such that 
		\begin{equation}
	\label{eq:Rtype}    
	f\equiv E_{k,\ell}^{\varepsilon} \pmod*{\mathfrak p}.
	\end{equation}
	{\rm ii)} Let $\varepsilon \in \{\pm  1\}$ be fixed. 
	Suppose that $p\ge 5$ divides $H_k^{\varepsilon}$ for some even integer
	$k\ge 4$.
	Then there exists a normalized eigenfunction $f \in S_k^\varepsilon(\ell)$ for all Hecke
	operators $T_q$ with $q\neq \ell$ a prime, and a prime ideal $\mathfrak p$ over $p$ in the coefficient field of $f$ such that 
	\eqref{eq:Rtype} holds.  
\end{Thm}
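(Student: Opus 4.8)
The plan is to establish part (ii) directly by an Eisenstein congruence argument and then to deduce part (i) from it. For the reduction I would use the factorisation
\[
H_k=(1-\ell^{k/2})\,H_k^{+}=(1+\ell^{k/2})\,H_k^{-},
\]
which follows at once from $1-\ell^k=(1-\ell^{k/2})(1+\ell^{k/2})$ and the definitions in \eqref{notation}. Hence if $p\ge 5$ divides $H_k$, then either $p$ divides the reduced numerator of $B_k/k$ (and so divides both $H_k^{+}$ and $H_k^{-}$), or $p\mid(1-\ell^{k/2})$ (so $p\mid H_k^{-}$), or $p\mid(1+\ell^{k/2})$ (so $p\mid H_k^{+}$). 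In every case $p\mid H_k^{\varepsilon}$ for a suitable $\varepsilon\in\{\pm1\}$, and part (ii) then produces the desired eigenform $f$ of part (i).

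For part (ii) the first step is to record the eigenform structure of $E_{k,\ell}^{\varepsilon}$. Since $E_k$ is a level-one Hecke eigenform and the operators $T_q$ with $q\neq\ell$ act on the oldform space, both $E_k(z)$ and $E_k(\ell z)$ are eigenvectors for $T_q$ with eigenvalue $\sigma_{k-1}(q)=1+q^{k-1}$; hence so is the normalised form $E_{k,\ell}^{\varepsilon}$ (which has $a_1=1$ by \eqref{ES}). Moreover the combination $E_k(z)+\varepsilon\ell^{k/2}E_k(\ell z)$ is exactly the $\varepsilon$-eigenvector of the Atkin--Lehner involution $W_\ell$ on the two-dimensional Eisenstein subspace of $M_k(\ell)$ spanned by $E_k(z)$ and $E_k(\ell z)$. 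Its constant term at the cusp $\infty$ is $-H_k^{\varepsilon}/2$, as already noted; and because $E_{k,\ell}^{\varepsilon}$ is a $W_\ell$-eigenvector while $W_\ell$ permutes the two cusps $0,\infty$ of $\Gamma_0(\ell)$, the constant term at the cusp $0$ is, up to sign, again $-H_k^{\varepsilon}/2$. In particular, when $p\mid H_k^{\varepsilon}$ both constant terms vanish modulo a fixed prime $\mathfrak p$ over $p$.

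The heart of the argument is the passage from Eisenstein to cuspidal. Reducing modulo $\mathfrak p$, the vanishing of both constant terms shows that $\overline{E}_{k,\ell}^{\varepsilon}$ is a nonzero \emph{cuspidal} mod-$p$ eigenform of weight $k$ and level $\ell$. Its system of eigenvalues $T_q\mapsto 1+q^{k-1}\bmod\mathfrak p$ therefore lies in the support of the cuspidal Hecke algebra acting on $S_k(\ell)$ over $\overline{\mathbb{F}}_p$. Applying the Deligne--Serre lifting lemma to this eigensystem yields a normalised cuspidal eigenform $f\in S_k(\ell)$ for all $T_q$ with $q\neq\ell$, together with a prime $\mathfrak p$ over $p$ in its coefficient field such that $a_q(f)\equiv 1+q^{k-1}\pmod*{\mathfrak p}$ for every $q\neq\ell$, that is, $f\equiv E_{k,\ell}^{\varepsilon}\pmod*{\mathfrak p}$. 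Finally, since $W_\ell$ commutes with the $T_q$ and $W_\ell^2=1$, the cuspidal space splits as $S_k^{+}(\ell)\oplus S_k^{-}(\ell)$; as $+1\not\equiv-1\pmod*{\mathfrak p}$ for $p\neq 2$, the congruence forces the $W_\ell$-eigenvalue of $f$ to equal $\varepsilon$, so $f\in S_k^{\varepsilon}(\ell)$.

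I expect the main obstacle to be precisely this Eisenstein-to-cuspidal transition: deducing from ``$\overline{E}_{k,\ell}^{\varepsilon}$ is cuspidal mod $p$'' that its eigensystem is realised by a characteristic-zero cusp form. This requires both that a weight-$k$ mod-$p$ form vanishing at every cusp genuinely lies in $S_k(\ell;\overline{\mathbb{F}}_p)$, so that the eigensystem is cuspidal, and the Deligne--Serre lemma to lift it to characteristic zero. Controlling the behaviour at $\ell$ (the oldform/newform structure and the role of $U_\ell$ versus $W_\ell$) and checking the constant term at the cusp $0$ as well as at $\infty$ are the delicate points; this, together with the standard avoidance of small-prime pathologies in the theory of mod-$p$ modular forms, is what the hypothesis $p\ge 5$ is there to ensure.
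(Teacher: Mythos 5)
First, a structural remark: the paper does not prove Theorem \ref{eigenform_varepsilon} at all --- it quotes it from Kumar et al.\ \cite{KKMK} --- so there is no in-paper proof to compare yours against; your proposal has to be judged as a proof of the cited result. Your skeleton is indeed the standard route to such Eisenstein congruences and very much in the spirit of the literature the paper cites: reduce (i) to (ii) via $H_k=(1-\ell^{k/2})H_k^{+}=(1+\ell^{k/2})H_k^{-}$, then for (ii) use that $E_{k,\ell}^{\varepsilon}$ is a normalized $T_q$-eigenform for $q\neq\ell$, is the $\varepsilon$-eigenvector of $W_\ell$, has its two constant terms (at the two cusps of $\Gamma_0(\ell)$, which $W_\ell$ swaps) killed by $\mathfrak p$ when $p\mid H_k^{\varepsilon}$, and lift the resulting mod-$p$ cuspidal eigensystem. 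One small fixable imprecision in the reduction (i)$\Rightarrow$(ii): when $p-1\mid k$, von Staudt--Clausen gives $\nu_p(B_k/k)=-1-\nu_p(k)<0$, so ``$p\mid 1\mp\ell^{k/2}$ implies $p\mid H_k^{\mp}$'' is not immediate. The correct argument is that $p\geq 5$ divides at most one of the factors $1\pm\ell^{k/2}$ (their difference is $2\ell^{k/2}$), so the entire $p$-valuation of $1-\ell^{k}$ sits in a single factor $1+\varepsilon\ell^{k/2}$, whence $\nu_p(H_k^{\varepsilon})=\nu_p(H_k)\geq 1$ for that $\varepsilon$; and if $p$ divides neither factor then $\nu_p(B_k/k)=\nu_p(H_k)\geq1$ and both $\varepsilon$ work.

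The genuine gaps are in part (ii). (1) The Eisenstein-to-cuspidal step, which you yourself flag as ``the main obstacle'', is the actual content of the theorem and is not supplied: vanishing of both constant terms modulo $\mathfrak p$ makes the reduction cuspidal as a mod-$p$ form, but to invoke Deligne--Serre you must know this eigensystem occurs in $S_k(\ell;\mathcal{O})\otimes\overline{\mathbb F}_p$, i.e.\ you need surjectivity of the reduction map on cusp forms (or a Hasse-invariant/filtration argument); announcing the lemma is not the same as carrying out this step. (2) More seriously, your determination of the Atkin--Lehner sign is circular. Deligne--Serre only yields $a_q(f)\equiv 1+q^{k-1}\pmod{\mathfrak p}$ for primes $q\neq\ell$, and these eigenvalues carry no information whatsoever about the $W_\ell$-eigenvalue: the two old eigenforms $g(z)\pm\ell^{k/2}g(\ell z)$ attached to a level-one form $g$ have identical $T_q$-eigenvalues for all $q\neq\ell$ but opposite $W_\ell$-signs. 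Your one-line argument (``$+1\not\equiv-1\pmod{\mathfrak p}$ forces $w=\varepsilon$'') presupposes the full $q$-expansion congruence including the coefficient $a_\ell$, which is exactly what has not been established. For a newform $f$ of level $\ell$ one has $a_\ell(f)=-w\,\ell^{k/2-1}$, and a short computation shows that $a_\ell(f)\equiv a_\ell(E_{k,\ell}^{\varepsilon})\pmod{\mathfrak p}$ with $w=\varepsilon$ amounts to $\mathfrak p\mid(1+\varepsilon\ell^{k/2-1})(1+\varepsilon\ell^{k/2})$, a nontrivial condition that must be connected to the hypothesis $p\mid H_k^{\varepsilon}$; in the literature this sign-pinning is done by a case analysis ($p\mid B_k/k$ versus $p\mid 1+\varepsilon\ell^{k/2}$) together with genuine input at $\ell$ (e.g.\ the local behaviour of the residual Galois representation), not merely $p>2$. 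So the proposal is the right skeleton, but the two steps where the theorem actually lives are missing, and the sign argument as written would fail.
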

If we fix a prime $\ell$, we can wonder about the ubiquity of the primes 
$p$ for
which a Ramanujan-congruence \eqref{eq:Rtype} for some even integer $k\ge 4$ exists. 
This amounts to estimating the number of prime
divisors $p\le x$ as $x$ gets large of the sequences $\{H_{2n}\}_{n=1}^{\infty}$ and $\{H_{2n}^{\varepsilon}\}_{n=1}^{\infty}$\footnote{For various reasons we included the terms
with $n=1$ as well. It is a consequence of the Kummer congruences that these
sequences have the same prime divisors as the ones with the term $n=1$ left out.}.
\par It is not so difficult to show (see Lemma \ref{lem:wieferich}) that a prime $p$ divides the
$H$-sequence if and only
if it is $H$-irregular or is in the Wieferich set
\begin{equation}\label{wieferich1}
    {\mathcal W}_{\ell}=\{p>2:\ell^{p-1}\equiv 1\bmod{p^2}\}.
\end{equation}
Likewise, $p$ divides the $H^{\varepsilon}$-sequence
if and only
if it is $H^{\varepsilon}$-irregular or is in the Wieferich set ${\mathcal W}_{\ell}$ (if $\varepsilon=-1$) or
${\mathcal W}_{\ell}^+$ (if $\varepsilon=1$),
where
\begin{equation}\label{wieferich2}
    {\mathcal W}_{\ell}^{\varepsilon}=\{p>2:\ell^{(p-1)/2}\equiv -\varepsilon\bmod{p^2}\},\quad \varepsilon\in\{-1,1\}.
\end{equation}
Note that ${\mathcal W}_{\ell}={\mathcal W}_{\ell}^-+{\mathcal W}_{\ell}^+$.
The Wieferich sets are believed to be very sparse and thus the congruence ubiquity problem in essence amounts to estimating ${\mathcal P}_{H^{\varepsilon}}(x)$, the number of
$H^{\varepsilon}$-irregular primes up to $x$. The results (too lengthy to be stated
here) are presented in Sect.\,\ref{sec:hplusmincounting}. We show that 
\begin{equation}\label{delta1}
    {\mathcal P}_{H^{\varepsilon}}(x)>\delta_1\frac{x}{\log x},\quad \delta_1>0,\quad x\rightarrow \infty,
\end{equation}
and making heuristical assumptions similar to that of Siegel, we conjecture that
$${\mathcal P}_{H^{\varepsilon}}(x)\sim \delta_2\frac{x}{\log x},\quad \delta_2>0,\quad x\rightarrow \infty,$$
with $\delta_1$ and $\delta_2>\delta_1$ explicit constants. Our constants $\delta_2$ are consistent with numerical data, see Table \ref{tabHpm}. The inequality \eqref{delta1} can be
compared to the best-known result for Bernoulli numbers.
Namely, Luca, Pizarro-Madariaga, and Pomerance \cite{Luca}
showed that the number ${\mathcal P}_B(x)$ of  irregular primes up to
$x$ satisfies
$$
{\mathcal P}_B(x)  \ge (1+o(1)) \frac{\log\log x}{\log\log\log x}.
$$
\par In Sec.\,\ref{hahaha} we extend the above results by restricting to primes in
a prescribed arithmetic progression.
 Given $1\leq a<d$ coprime integers and a prime $\ell$, we define 
\[ \mc P_{H^{\varepsilon}}(d,a):= \{ p : p\equiv a \bmod d \text{ and } p \text{ is $H^{\varepsilon}$-irregular} \},\,\varepsilon\in\{-1,1\}. \]
In order to study the primes in these sets we consider the
following related sets:
\[ \mc A_{d,a}=\{p\equiv a\bmod d: \ord_p(\ell)=p-1  \},\,\mc A_{d,a}^-:=\{p\equiv a\bmod d:\ord_p(\ell)=(p-1)/2\}.\]
To these sets we associate infinite sums $\alpha_{d,a}$ and
$\alpha^-_{d,a}$ given by \eqref{alphada}, respectively 
\eqref{alphada-}. Under GRH it follows from general results of 
Lenstra \cite{Lenstra} that these are the respective relative (inside the set of primes $p\equiv a\bmod d$) densities of the sets.
The infinite sums $\alpha_{d,a}$ and
$\alpha^-_{d,a}$ are given in Euler product form in 
Theorem \ref{thm-pietersap} (well-known), respectively Theorem \ref{thm_Hminus} (new). 
Both are rational multiples of the \emph{Artin constant} 
\begin{equation}
\label{Artinconstantdef}
A= \prod_{\textrm{prime $p$}}\Big (1-\frac{1}{p(p-1)} \Big) = 0.3739558136192022880547280543464\ldots,
\end{equation}
The set $\mc A_{d,a}$ has been well-studied, but not so
$\mc A^-_{d,a}$, although some special cases occur in various number theoretical problems, see, e.g., 
\cite{CM,Daileda,MZ}.
\begin{Thm}
\label{thm:HinAP}
Let $\ell$ be an odd prime.
For $\epsilon>0$ arbitrary and 
$\varepsilon\in \{-1,1\}$ we have
\[ \mc P_{H^{\varepsilon}}(d,a)(x)\geq (1 - \delta^{\varepsilon}_{d,a}-\epsilon)\frac{x}{\varphi(d)\log x}, \]
where $\delta^-_{d,a}=\alpha_{d,a}+\alpha^-_{d,a}$. 
We have $\delta^+_{d,a}=\alpha_{d,a}+\rho_{\ell,1}(a,d)$,
where $\rho_{\ell,1}(a,d)$ 
is the density of prime divisors $p\equiv a\bmod{d}$ 
of 
the sequence $\{\ell^n+1\}_{n\ge 1}^{\infty}$. 
This is always a rational number 
and is explicitly determined in 
Moree and Sury \cite{MS}.
\end{Thm}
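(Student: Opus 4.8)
The plan is to isolate a cheap sufficient condition for $H^{\varepsilon}$-irregularity whose failure set can be counted, and then to estimate that failure set inside the progression $a\bmod d$. First I would note that for $2\le 2n\le p-3$ the number $B_{2n}/(2n)$ is $p$-integral: by von Staudt--Clausen \eqref{vsclausen} the prime $p$ divides the denominator of $B_{2n}$ only when $p-1\mid 2n$, which is excluded here, and $p\nmid 2n$ as well. Hence $\nu_p(H_{2n}^{\varepsilon})\ge\nu_p(1+\varepsilon\ell^{n})$, and since $H_{2n}^{\varepsilon}\neq 0$,
\[
\ell^{n}\equiv-\varepsilon\pmod p\ \text{ for some }1\le n\le\tfrac{p-3}{2}\quad\Longrightarrow\quad p\ \text{is }H^{\varepsilon}\text{-irregular}.
\]
Thus the whole problem reduces to counting the primes in the class for which this congruence is solvable in the indicated range; note that, unlike the description of the full set of prime divisors in Lemma~\ref{lem:wieferich}, a lower bound needs no input from the Wieferich sets.

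Next I would translate solvability into a condition on $t:=\ord_p(\ell)\mid p-1$. The least positive solution of $\ell^{n}\equiv-\varepsilon\pmod p$ is $t$ when $\varepsilon=-1$, and $t/2$ when $\varepsilon=+1$ (the latter requiring $t$ even). Since the only divisors of $p-1$ exceeding $(p-3)/2$ are $(p-1)/2$ and $p-1$, for $\varepsilon=-1$ the congruence is solvable with $n\le(p-3)/2$ exactly when $t\notin\{(p-1)/2,\,p-1\}$, so the uncovered primes are precisely those of $\mc A_{d,a}\cup\mc A_{d,a}^{-}$, of relative density $\alpha_{d,a}+\alpha_{d,a}^{-}=\delta_{d,a}^{-}$. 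For $\varepsilon=+1$ and $p\ge 5$ one checks that it is solvable in range exactly when $t$ is even and $t\neq p-1$; the uncovered primes split into those of $\mc A_{d,a}$ and those for which $\ell^{n}\equiv-1$ is never solvable, of relative densities $\alpha_{d,a}$ and $\rho_{\ell,1}(a,d)$, summing to $\delta_{d,a}^{+}$.

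Finally I would convert this into the stated inequality. Writing $U_{\varepsilon}(x)$ for the number of uncovered primes $p\le x$ in the class, the prime number theorem in arithmetic progressions gives $\mc P_{H^{\varepsilon}}(d,a)(x)\ge\pi(x;d,a)-U_{\varepsilon}(x)+O(1)$ with $\pi(x;d,a)\sim x/(\varphi(d)\log x)$. The decisive point is that, although the exact values of $\alpha_{d,a}$ and $\alpha_{d,a}^{-}$ rest on GRH through \cite{Lenstra} (Theorems~\ref{thm-pietersap} and \ref{thm_Hminus}), each admits an unconditional \emph{upper} bound, while $\rho_{\ell,1}(a,d)$ is known unconditionally from \cite{MS}; this is what keeps the theorem GRH-free. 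For $\mc A_{d,a}$ and $\mc A_{d,a}^{-}$ I would use the classical Artin-type inclusion--exclusion truncated at a parameter $y$: discarding the conditions at primes $q>y$ only enlarges the set, and for fixed $y$ the truncated count is evaluated by effective Chebotarev in the Kummer fields $\Q(\zeta_{dq},\ell^{1/q})$ and their compositum, with a power-saving error. Letting $y\to\infty$ the truncated main term decreases to $\alpha_{d,a}$, respectively $\alpha_{d,a}^{-}$, so for every $\epsilon>0$ one obtains $\mc A_{d,a}(x)\le(\alpha_{d,a}+\epsilon)\pi(x;d,a)$ and the analogous bound for $\mc A_{d,a}^{-}$; the set where $\ell^{n}\equiv-1$ is unsolvable is treated identically, via the $2$-power Kummer fields. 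Subtracting yields $(1-\delta_{d,a}^{\varepsilon}-\epsilon)\,x/(\varphi(d)\log x)$. The main obstacle is precisely this uniform, unconditional control of the near-primitive-root counts: one must carry out the degree computations for $\Q(\zeta_{m},\ell^{1/m})$ in the presence of the congruence $p\equiv a\bmod d$, tracking the entanglement between $\Q(\zeta_d)$ and the Kummer tower and checking that the truncated Euler products converge to the constants of Theorems~\ref{thm-pietersap} and \ref{thm_Hminus}.
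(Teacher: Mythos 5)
Your strategy is exactly the one the paper intends (the paper leaves the details to the reader, pointing to the results of Section \ref{hahaha} and to the subtraction argument in Section \ref{sec:thm3}): reduce $H^{\varepsilon}$-irregularity to solvability of $\ell^{n}\equiv-\varepsilon\pmod{p}$ with $1\le n\le(p-3)/2$ via the $p$-integrality of $B_{2n}/2n$ (this is \eqref{eq:gelijk}, i.e.\ the easy direction of Lemma \ref{lem-H}, and you rightly note that a lower bound needs only this direction and no Wieferich input), convert solvability to order conditions as in Lemma \ref{lem_even}, and subtract an unconditional upper bound for the exceptional set from $\pi(x;d,a)$. For $\varepsilon=-1$ your argument is complete and agrees with the paper: the exceptional set is $\mc A_{d,a}\cup\mc A^-_{d,a}$, whose count is at most $(\alpha_{d,a}+\alpha^-_{d,a}+2\epsilon)\frac{x}{\varphi(d)\log x}$ by Theorems \ref{thm-pietersap} and \ref{thm_Hminus} (their combination is exactly Theorem \ref{thm:deltada-}); your closing remarks on how such upper bounds are obtained unconditionally (truncated inclusion--exclusion plus effective Chebotarev) describe the standard argument that the paper outsources to \cite{HKMS}.

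In the $\varepsilon=+1$ case, however, your density bookkeeping conflates two different quantities, and the difference matters. You assign ``relative density $\rho_{\ell,1}(a,d)$'' to the primes for which $\ell^{n}\equiv-1\pmod{p}$ is never solvable, i.e.\ to the primes with $\ord_p(\ell)$ odd. But by the definition in the statement (and in \eqref{eq:sequencedivisor}, following Moree--Sury), $\rho_{\ell,1}(a,d)$ is the density of the prime \emph{divisors} of $\{\ell^n+1\}$, i.e.\ of the primes with $\ord_p(\ell)$ \emph{even}, and it is normalized against $x/\log x$, not against $x/(\varphi(d)\log x)$. Taking the definitions literally, your exceptional set has counting function at most
\[
(\alpha_{d,a}+\epsilon)\frac{x}{\varphi(d)\log x}+\pi(x;d,a)-\mathcal Q_{\ell,1}(a,d)(x),
\]
and the subtraction then yields the constant $\varphi(d)\rho_{\ell,1}(a,d)-\alpha_{d,a}$, not $1-\alpha_{d,a}-\rho_{\ell,1}(a,d)$; the two agree only when $\rho_{\ell,1}(a,d)=1/(1+\varphi(d))$. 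The paper's own inequality \eqref{Hplus} (the case $d=1$, where the bound is $(\rho_{\ell,1}(1,1)-\delta(\ell,1)-\epsilon)x/\log x$ with $\rho_{\ell,1}(1,1)=2/3$, not $(1-\delta(\ell,1)-2/3)x/\log x$) confirms the first form. In other words, the statement of Theorem \ref{thm:HinAP} itself misdescribes $\rho_{\ell,1}(a,d)$, and your proof of the $+$ case goes through only because it silently adopts the reinterpretation (``$\rho_{\ell,1}(a,d)$ is the relative density of primes $p\equiv a\bmod d$ with $\ord_p(\ell)$ odd'') under which the stated formula becomes true. A rigorous write-up should either prove the corrected constant $\varphi(d)\rho_{\ell,1}(a,d)-\alpha_{d,a}$, or state explicitly that $\rho_{\ell,1}(a,d)$ is being used in this redefined sense; as written, the step ``of relative densities $\alpha_{d,a}$ and $\rho_{\ell,1}(a,d)$'' is false under the definition given in the statement you were asked to prove.
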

The proof is a quite immediate consequence of the 
results proved in Section 
\ref{hahaha} and similar to that of
Theorem \ref{thm:GinAP} given in
Section \ref{sec:thm3}. The details are left to the
interested reader.
\subsection{Counting $\ell$-Genocchi irregular primes}
We let $G=\{G_{2n}\}_{n\ge 1}$ with $G_{2n}:=2\ell n H_{2n}=\ell(1-\ell^{2n})B_{2n}$ be the 
sequence of \emph{$\ell$-Genocchi numbers}.
The inclusion of the factor $\ell$ ensures that they are integers.
In case $\ell=2$ we speak about \emph{Genocchi numbers}. These show
up in a result similar to Kummer's Theorem 
\ref{thm:Kummer}, see Hu and Kim \cite{HK}, and it might thus be reasonable to consider them also for arbitrary $\ell$.
An odd prime $p$ is said to be \textit{$\ell$-Genocchi (ir)regular} 
if and only if it is $G$-(ir)regular.
The first twenty $2$-Genocchi irregular primes are  
$$
17, 31, \textbf{37}, 41, 43, \textbf{59}, \textbf{67}, 73, 89, 97, \textbf{101}, \textbf{103}, 109, 113, 127, \textbf{131}, 137, 
\textbf{149}, 151, \textbf{157},
$$
where those that are
also $B$-irregular are put in boldface. There is a considerable literature
on the classical Genocchi numbers (Sect.\,\ref{subsec:Earlier2}), but little seems to have been done in the general
case (Sect.\,\ref{subsec:Earlierodd}). The $\ell$-Genocchi irregular primes show up in the study of the prime
divisors of $H$. Namely, a prime $p$ divides the $H$-sequence if and only if it is 
$\ell$-Genocchi irregular or in the Wieferich set ${\mathcal W}_{\ell}$ (Proposition \ref{prop:Hdivisor}).
As the Wieferich set is believed to be sparse, the study of the prime divisors
of $H$ is in essence that of $\ell$-Genocchi irregular primes.
\par Very little is known about the distribution of irregular primes in a 
prescribed arithmetic
progression. We will show that for the $\ell$-Genocchi irregular primes 
the situation is rather different. This is a consequence of the 
divisor structure of $\ell^{2n}-1$ being much better
understood than that of the Bernoulli numerators.
\par Given $1\leq a<d$ coprime integers and a prime $\ell$, we define 
\[ \mc P_{G}(d,a):= \{ p : p\equiv a \bmod d \text{ and } p \text{ is $\ell$-Genocchi irregular} \}. \]
We are interested in the behavior of $\mc P_{G}(d,a)(x)$ as $x$ gets large as
compared to that of $\pi(x;d,a):=\#\{ p\le x: p\equiv a \bmod d \}$, which is known to behave asymptotically as $x/(\varphi(d) \log x)$. The latter
function appears  in Theorem \ref{thm:GinAP} and 
Conjecture \ref{con:PinAP} and can thus be replaced by $\pi(x;d,a)$.
By $\kronecker{*}{*}$ we denote the Jacobi symbol.
\begin{Thm}
\label{thm:GinAP}
Let $\ell$ be an odd prime.
For $\epsilon>0$ arbitrary we have
\[ \mc P_{G}(d,a)(x)\geq (1 - \delta_{d,a}-\epsilon)\frac{x}{\varphi(d)\log x}, \]
where $\delta_{d,a}$ is given by \eqref{deltada} and worked out in Euler product form in 
Theorem \ref{thm-ap}.
We have $1-\delta_{d,a}>0$.
Moreover, either $1-\delta_{d,a}\ge 1/4$, or $4\mid d$ and 
$a\equiv 3\bmod4$, or $\ell\mid d$ and $\kronecker{a}{\ell}=-1$.
\end{Thm}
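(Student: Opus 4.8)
The plan is to reduce the count of $\ell$-Genocchi irregular primes in the progression to a count of primes in which $\ell$ has \emph{large} multiplicative order, to import the density estimates of Section~\ref{hahaha}, and finally to read off positivity and the dichotomy from one explicit auxiliary family. First I would pin down which primes are $\ell$-Genocchi regular. Fix $p\equiv a\bmod d$ with $p\ge 7$ and $p\neq\ell$, and recall $G_{2n}=\ell(1-\ell^{2n})B_{2n}$. For $2\le 2n\le p-3$ we have $p-1\nmid 2n$, so by von Staudt--Clausen $\nu_p(B_{2n})\ge 0$ and hence $\nu_p(G_{2n})=\nu_p(1-\ell^{2n})+\nu_p(B_{2n})$. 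Thus $p\mid G_{2n}$ as soon as $\ell^{2n}\equiv 1\bmod p$, i.e. as soon as $e:=\ord_p(\ell)$ divides $2n$. Writing $f=(p-1)/e$ for the index, an elementary case analysis on the smallest even multiple of $e$ shows that no admissible $2n\in[2,p-3]$ satisfies $e\mid 2n$ precisely when $f=1$, or when $f=2$ and $e$ is odd (equivalently $p\equiv 3\bmod 4$). The regular primes form the intersection of this order condition with $B$-regularity; discarding the latter only enlarges the set, so every $\ell$-Genocchi regular $p\equiv a\bmod d$ lies in the \emph{safe set}
\[
\mc S_{d,a}:=\mc A_{d,a}\cup\{p\in\mc A_{d,a}^-:p\equiv 3\bmod 4\}.
\]

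Second, I would convert this containment into the stated bound. Since the $\ell$-Genocchi irregular primes in the progression are exactly the complement of the regular ones, $\mc P_{G}(d,a)(x)\ge \pi(x;d,a)-\#\mc S_{d,a}(x)+O(1)$. By the evaluation of \eqref{deltada} in Theorem~\ref{thm-ap}, the relative density of $\mc S_{d,a}$ inside $\{p\equiv a\bmod d\}$ equals $\delta_{d,a}$, and the results of Section~\ref{hahaha} furnish the corresponding \emph{unconditional} upper bound $\#\mc S_{d,a}(x)\le(\delta_{d,a}+\epsilon)\,x/(\varphi(d)\log x)$ for every $\epsilon>0$ and all large $x$. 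Combined with $\pi(x;d,a)\sim x/(\varphi(d)\log x)$ this yields the asserted inequality. I expect this unconditional upper bound to be the main obstacle: passing from the (GRH) densities $\alpha_{d,a},\alpha^-_{d,a}$ of Lenstra~\cite{Lenstra} to an unconditional majorant requires truncating the inclusion--exclusion over the Kummer extensions $\Q(\zeta_k,\ell^{1/k})$, controlling the tail by a Brun--Titchmarsh estimate, and bookkeeping the $2$-adic and $\ell$-adic entanglement corrections that enter the Euler product.

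Finally, I would establish $1-\delta_{d,a}>0$ and the dichotomy by exhibiting one explicit positive-density family of irregular primes, namely
\[
\mc T_{d,a}:=\{p\equiv a\bmod d:\ \kronecker{\ell}{p}=1\ \text{and}\ p\equiv 1\bmod 4\}.
\]
Every $p\in\mc T_{d,a}$ (with $p$ large) avoids $\mc S_{d,a}$: the condition $\kronecker{\ell}{p}=1$ forces $2\mid f$, so $f\neq 1$, while the only safe index with $2\mid f$ is $f=2$ with $p\equiv 3\bmod 4$, which $p\equiv 1\bmod 4$ excludes. Hence $1-\delta_{d,a}$ is at least the relative density of $\mc T_{d,a}$. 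When the two defining conditions are independent of $p\equiv a\bmod d$, Chebotarev gives this relative density the value $1/4$, proving $1-\delta_{d,a}\ge 1/4$. The value collapses only when one condition is forced to fail on the progression: $p\equiv 1\bmod 4$ is impossible exactly when $4\mid d$ and $a\equiv 3\bmod 4$, whereas (by quadratic reciprocity, which for $p\equiv 1\bmod 4$ gives $\kronecker{\ell}{p}=\kronecker{p}{\ell}=\kronecker{a}{\ell}$) the condition $\kronecker{\ell}{p}=1$ becomes incompatible exactly when $\ell\mid d$ and $\kronecker{a}{\ell}=-1$. These are precisely the two exceptional cases in the statement; outside them $1-\delta_{d,a}\ge 1/4$. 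That $1-\delta_{d,a}>0$ holds in every case follows because primes of index at least $3$ (equivalently, with $\ell$ a proper power residue) retain positive relative density in any fixed progression, so the safe set of index-$1$ and index-$2$ primes can never exhaust $\{p\equiv a\bmod d\}$.
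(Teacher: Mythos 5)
Your first two steps are correct and essentially reproduce the paper's own proof. The regularity criterion you re-derive from von Staudt--Clausen is Proposition \ref{lem_Greg}: your ``safe set'' $\mc S_{d,a}$ is exactly the set $\mc P_{d,a}$ of \eqref{eq:pdadef} (compare \eqref{altset}), and the complement count $\mc P_G(d,a)(x)\ge \pi(x;d,a)-\mc P_{d,a}(x)+O(1)$ combined with the unconditional upper bound \eqref{eq:upper} is precisely the argument of Sect.\,\ref{sec:thm3}. Also, the unconditional upper bound you flag as ``the main obstacle'' is not something you need to re-prove: it is part of Theorem \ref{thm-ap}, which the statement explicitly takes as input (and which rests on the standard truncation machinery of \cite[Theorem 3.1]{HKMS}).

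The genuine gap is in your third step. The assertions $1-\delta_{d,a}>0$ and $1-\delta_{d,a}\ge 1/4$ are \emph{unconditional} statements about the number $\delta_{d,a}$ defined by the series \eqref{deltada}. Your argument instead bounds the relative density of the \emph{set} $\mc S_{d,a}$ via disjointness from $\mc T_{d,a}$, and to transfer that bound to $\delta_{d,a}$ you use ``the relative density of $\mc S_{d,a}$ equals $\delta_{d,a}$'' --- which in this paper is a GRH-conditional statement; unconditionally one only has the upper bound \eqref{eq:upper}, which points in the wrong direction for your purpose. So, as written, your proof of positivity and of the dichotomy is conditional on GRH, whereas the theorem asserts them unconditionally. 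The paper avoids this entirely by reading the bounds off the explicit Euler product $\delta_{d,a}=A\,c(d,a)\,R(d,a)$ of Theorem \ref{thm-ap} (Proposition \ref{casesbounds}; e.g.\ $AR(d,a)<1/2$ and $c(d,a)\le 2$ already give $\delta_{d,a}<1$). Your route can be repaired unconditionally, because your family $\mc T_{d,a}$ (which is the paper's Corollary \ref{cor:simple}) is exactly the set of primes in the progression splitting completely in $\Q(i,\sqrt{\ell})$, i.e.\ it is the failure set of the $q=2$ condition in the inclusion--exclusion defining \eqref{deltada}: every truncation $\delta^{(y)}_{d,a}$ of that series with $y\ge 2$ is the genuine (Chebotarev, unconditional) relative density of a set disjoint from $\mc T_{d,a}$, so $\delta^{(y)}_{d,a}\le 1-\tau$, and letting $y\to\infty$ gives $\delta_{d,a}\le 1-\tau$. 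The same repair is needed for your closing positivity claim, which is moreover stated too loosely: ``index divisible by $3$'' is impossible when $3\mid d$ and $a\equiv 2\bmod 3$, so you must choose an auxiliary prime $q\nmid 2d\ell$, use splitting in $\Q(\zeta_{2q},\ell^{1/q})$, and again argue at the level of truncations of \eqref{deltada} rather than at the level of conjectural densities.
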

A similar result for $\ell=2$ can be found in Hu et al.\,\cite{HKMS}.
The relative density $1-\delta_{d,a}$ 
can be arbitrarily close to $0$, respectively $1$.
For particulars see Sect.\,\ref{sec:extremal}.
\par Regarding the true behavior of 
$\mc P_{G}(d,a)(x)$ we make the following conjecture 
(consistent with numerical data, see Table \ref{tabG3}).
\begin{Con}
\label{con:PinAP}
Let $\ell$ be an odd prime.
Asymptotically one has
\[ \mc P_{G}(d,a)(x)\sim \bigg(1- \frac{\delta_{d,a}}{\sqrt{e}} \bigg)\frac{x}{\varphi(d)\log x}, \]
where $\delta_{d,a}$ is given by \eqref{deltada} and 
worked out in Euler product form in 
Theorem \ref{thm-ap}.
\end{Con}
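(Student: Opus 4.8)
The statement is a heuristic prediction rather than a theorem, so the aim is to give a Siegel-style derivation paralleling the discussion following Conjecture~\ref{sheuristics}. The plan is to factor $G$-regularity into an ``order part,'' whose relative density is exactly $\delta_{d,a}$, and a ``Bernoulli part,'' which contributes the factor $1/\sqrt e$.

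First I would record the local criterion. Fix $p\equiv a\bmod d$ with $p\neq\ell$, and take $n$ with $1\le n\le (p-3)/2$. Since then $2n\le p-3<p-1$, von Staudt--Clausen \eqref{vsclausen} gives $\nu_p(B_{2n})\ge 0$, so writing $G_{2n}=\ell(1-\ell^{2n})B_{2n}$ we obtain
\[
\nu_p(G_{2n})=\nu_p(1-\ell^{2n})+\nu_p(B_{2n})\ge 0,
\]
with equality failing precisely when $\ell^{2n}\equiv 1\bmod p$ or $p\mid B_{2n}$. Hence $p$ is $G$-regular if and only if, for every admissible $n$, both $\ell^{2n}\not\equiv 1\bmod p$ and $p\nmid B_{2n}$. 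The first family of conditions is purely multiplicative: it holds for all such $n$ exactly when $\ord_p(\ell^2)=(p-1)/2$, i.e. when $\ell$ is a primitive root modulo $p$, or $\ord_p(\ell)=(p-1)/2$ with $p\equiv 3\bmod 4$. This is the event whose relative density inside the class $a\bmod d$ is $\delta_{d,a}$, as defined in \eqref{deltada} and evaluated in Theorem~\ref{thm-ap}; it is exactly the input producing the rigorous lower bound of Theorem~\ref{thm:GinAP}, since every prime failing it is automatically $G$-irregular. The second family of conditions is precisely that $p$ be $B$-regular.

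Next I would invoke the Siegel heuristic for the Bernoulli part. Modelling the numerators of $B_2,B_4,\dots,B_{p-3}$ as random modulo $p$, one expects $p\nmid B_{2n}$ with probability $1-1/p$, independently over the $(p-3)/2$ admissible indices and independently of both the residue $a\bmod d$ and the order condition. The chance that all are nonzero modulo $p$ is then
\[
\Big(1-\frac1p\Big)^{(p-3)/2}\longrightarrow e^{-1/2}=\frac{1}{\sqrt e}
\]
as $p\to\infty$. Combining the two events under the independence assumption, the relative density of $G$-\emph{regular} primes in the class $a\bmod d$ should be $\delta_{d,a}/\sqrt e$, so the density of $G$-irregular primes is $1-\delta_{d,a}/\sqrt e$; multiplying by $\pi(x;d,a)\sim x/(\varphi(d)\log x)$ yields the asserted asymptotic. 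Note this is consistent with Theorem~\ref{thm:GinAP}, since $1-\delta_{d,a}\le 1-\delta_{d,a}/\sqrt e$.

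The hard part is that the argument is irreducibly heuristic on two fronts. On the order side, that $\delta_{d,a}$ is the \emph{true} density of the relevant union of $\mc A_{d,a}$ and the congruence-restricted part of $\mc A^-_{d,a}$ is known only under GRH (via Lenstra/Hooley-type analysis), so unconditionally one recovers just the lower bound of Theorem~\ref{thm:GinAP}. On the Bernoulli side, the randomness model underlying $1/\sqrt e$ is exactly Siegel's open Conjecture~\ref{sheuristics}, here further assumed to persist after restricting to a progression and to primes with a prescribed index of $\ell$; establishing genuine independence of $B$-regularity from the multiplicative order of $\ell$ is well beyond current methods. Thus a literal proof is out of reach, and the statement is offered as a conjecture justified by this heuristic and by the numerical data of Table~\ref{tabG3}.
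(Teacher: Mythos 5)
Your derivation is correct and follows essentially the same route the paper takes: you recover the characterization of Proposition \ref{lem_Greg} ($G$-regular $\Leftrightarrow$ $B$-regular and $\ord_p(\ell^2)=(p-1)/2$), attach the GRH-conditional relative density $\delta_{d,a}$ of Theorem \ref{thm-ap} to the order condition, and apply Siegel's independence heuristic (as in Conjecture \ref{con:doubledensity}) to supply the factor $1/\sqrt{e}$. Since the statement is a conjecture, this heuristic justification, together with the consistency checks against Theorem \ref{thm:GinAP} and Table \ref{tabG3}, is exactly what the paper intends.
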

A similar conjecture can be formulated for $\mc P_{H^{\varepsilon}}(d,a)(x)$, where one merely replaces
$\delta_{d,a}$ by $\delta^{\varepsilon}_{d,a}$.
\par By Theorem \ref{thm-ap} we have 
for any odd prime $\ell$ that $\delta_{d,a}=0$ if and only if $4\ell$ divides $d$, $\kronecker{a}{\ell}=1$ and $a\equiv1\bmod4$.
Thus Conjecture \ref{con:PinAP} leads to the following 
weaker conjecture.
\begin{Con}
Let $\ell$ be an odd prime.
The set of $\ell$-Genocchi regular primes in the primitive residue
class $a\bmod{d}$ has a positive density, provided we are not in the
case where $4\ell$ divides $d$, $\kronecker{a}{\ell}=1$ and $a\equiv1\bmod4$.
If $\ell=2$ the density is positive provided we are not in the case where $8$ divides $d$ and 
$a\equiv1\bmod8$.
\end{Con}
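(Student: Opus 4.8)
The plan is to derive this weaker statement directly from Conjecture \ref{con:PinAP} together with the explicit evaluation of $\delta_{d,a}$ in Theorem \ref{thm-ap}. The key observation is that, within the primitive residue class $a\bmod d$, the $\ell$-Genocchi regular primes form exactly the complement of the $\ell$-Genocchi irregular ones. I would therefore write the counting function of regular primes in the class as $\pi(x;d,a)-\mc P_G(d,a)(x)$ and substitute the asymptotics of Conjecture \ref{con:PinAP} for the subtracted term. Since $\pi(x;d,a)\sim x/(\varphi(d)\log x)$, this yields that the number of $\ell$-Genocchi regular primes $p\le x$ with $p\equiv a\bmod d$ is asymptotically
\[
\Big(1-\Big(1-\frac{\delta_{d,a}}{\sqrt e}\Big)\Big)\frac{x}{\varphi(d)\log x}=\frac{\delta_{d,a}}{\sqrt e}\cdot\frac{x}{\varphi(d)\log x}.
\]
Hence the relative density of regular primes in the class equals $\delta_{d,a}/\sqrt e$, and since $\delta_{d,a}\ge 0$ this density is positive if and only if $\delta_{d,a}\neq 0$.

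The second step is merely to read off the vanishing locus of the Euler product for $\delta_{d,a}$. By the characterization recorded immediately before the statement (a consequence of Theorem \ref{thm-ap}), for any odd prime $\ell$ one has $\delta_{d,a}=0$ precisely when $4\ell\mid d$, $\kronecker{a}{\ell}=1$ and $a\equiv 1\bmod 4$. Combined with the equivalence from the first step, this gives positivity of the regular density outside this single exceptional configuration, which is exactly the odd-$\ell$ assertion.

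For $\ell=2$ the same scheme applies, except that $\delta_{d,a}$ must now be drawn from the corresponding $\ell=2$ density computation of Hu et al.\,\cite{HKMS} rather than from Theorem \ref{thm-ap}, which is stated only for odd $\ell$. I would check that here the product degenerates to zero exactly when $8\mid d$ and $a\equiv 1\bmod 8$: the role played for odd $\ell$ by $\kronecker{a}{\ell}$ is taken over by the behaviour of $a$ modulo $8$, and the two odd-prime constraints $a\equiv 1\bmod 4$ and $\kronecker{a}{\ell}=1$ coalesce into the single congruence $a\equiv 1\bmod 8$ (while $4\ell\mid d$ becomes $8\mid d$). Feeding this back into the density identity produces the stated $\ell=2$ exception.

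The implication itself is short, so there is no genuine obstacle in the argument as presented; all the difficulty is upstream. An unconditional proof would require establishing the conjectural $1/\sqrt e$ Siegel-type correction factor of Conjecture \ref{con:PinAP}, which is as far out of reach as the classical Bernoulli analogue in Conjecture \ref{sheuristics}. Granting that conjecture, the remainder is just the bookkeeping of when $\delta_{d,a}$ vanishes, already supplied by Theorem \ref{thm-ap} and its $\ell=2$ counterpart, and these routine details can be left to the reader.
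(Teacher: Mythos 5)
Your derivation is correct and is essentially the paper's own: the paper obtains this conjecture by combining Conjecture \ref{con:PinAP} (so that the regular primes in the class $a\bmod d$ get relative density $\delta_{d,a}/\sqrt{e}$) with the vanishing criterion for $\delta_{d,a}$ supplied by Theorem \ref{thm-ap}, exactly as you do. The only divergence is at $\ell=2$, where the paper does not rerun the argument (Conjecture \ref{con:PinAP} is stated only for odd $\ell$) but instead cites Conjecture 1.16 of Hu et al.\,\cite{HKMS}, whereas you sketch the same scheme using the $\ell=2$ evaluation of $\delta_{d,a}$ --- consistent with the paper's own $\ell=2$ proposition, which shows $\delta_{d,a}=0$ precisely when $8\mid d$ and $a\equiv 1\bmod 8$.
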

The case $\ell=2$ is not covered by our argumentation, but is
Conjecture 1.16 of Hu et al.\,\cite{HKMS}.

\section{Preliminaries}
\subsection{Basic properties of the numbers $H_n$}
{}From \eqref{vsclausen} we infer that $G_{2n}$ is an integer and
$H_n$ an $\ell$-integer (that is
satisfies $\nu_p(H_{2n})\ge 0$ for every prime $p\ne \ell$). 
Put $\zeta_{\ell}=e^{2\pi i/\ell}$.
Using \eqref{Bgen} we see that 
\begin{equation}
\label{generating}
 t\sum^{\ell-1}_{a=1} \frac{\zeta_\ell^a}{\zeta_\ell^a-e^t}=
\frac{t}{e^t-1}-
\frac{\ell t}{e^{\ell t}-1}= \sum_{n=1}^{\infty} \frac{H_nt^n}{(n-1)!}, 
\end{equation}
where the first identity follows on noting that, as formal series,
\begin{align*}
    \sum^{\ell-1}_{a=1} \frac{\zeta_\ell^a}{\zeta_\ell^a-e^t} & 
    = \sum_{n=0}^\infty \left( \sum^{\ell-1}_{a=1} \zeta_\ell^{-an} \right)e^{tn} 
    = -\sum_{\substack{n\geq 0\\ \ell\nmid n}} e^{tn}+(\ell-1)\sum_{\substack{n\geq0\\ \ell\mid n}}e^{tn} \\
    &=-\sum_{n=0}^\infty e^{tn} +\ell \sum_{n=0}^\infty e^{\ell tn}
    = \frac{1}{e^t-1} - \frac{\ell}{e^{\ell t}-1}.
\end{align*}
\par If $p-1\nmid 2n$, then Voronoi's congruence (see, for example, 
Murty \cite[Chp.\,1]{Murty}) gives
\begin{equation}
    \label{Voronoi}
H_{2n}\equiv -\ell^{2n-1}\sum_{j=1}^{p-1}j^{2n-1}\big[\frac{j\ell}{p}\big]\pmod*{p},
\end{equation}
where $[y]$ denotes the greatest integer function. 
\subsection{Divisibility of $H,H^-,H^+$ and 
the $\ell$-Genocchi numbers: elementary observations}

The following trivial result will play an important role. 
By  $\ord_{p}(\ell)$ we denote the multiplicative order of $\ell$ modulo $p$.
\begin{Lem}\label{lem_even}
Let $p$ and $\ell$ be distinct primes. \\
{\rm 1)} The prime $p$ divides
$\ell^n+1$ for some
integer $n\ge 1$ if and only if $\ord_{p}(\ell)$ is even.\\
{\rm 2)} The prime $p$ divides $\ell^n+1$ for some $1\leq n\leq(p-3)/2$ if and only if 
$\ord_{p}(\ell)$ is even and not equal to $p-1$. \\
{\rm 3)} The prime $p$ divides $\ell^n+1$ for some $1\leq n\leq p-2$ 
with
$n\ne (p-1)/2$ if and only if 
$\ord_{p}(\ell)$ is even and not equal to $p-1$. \\
{\rm 4)} The prime $p$ divides $\ell^n-1$ for some $1\leq n\leq(p-3)/2$ if and only if 
$\ord_{p}(\ell)<(p-1)/2$.\\
{\rm 5)} The prime $p$ divides $\ell^n-1$ for some $1\leq n\leq p-2$ with
$n\ne (p-1)/2$ if and only if $\ord_{p}(\ell)<(p-1)/2$.\\
{\rm 6)} The prime $p$ divides $\ell^{2n}-1$ for some $1\leq n\leq(p-3)/2$ if and only if 
$\ord_{p}(\ell^2)<(p-1)/2$.
\end{Lem}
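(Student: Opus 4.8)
The plan is to reduce all six items to two elementary facts about the cyclic group $(\Z/p\Z)^{*}$ and then read each part off by locating the smallest admissible exponent. Set $d=\ord_p(\ell)$, so that $d\mid p-1$ by Fermat's little theorem. The two facts I would record first are: $\ell^n\equiv 1\pmod p$ holds exactly when $d\mid n$, and $\ell^n\equiv -1\pmod p$ is solvable at all if and only if $d$ is even, in which case the solutions are precisely $n\equiv d/2\pmod d$. The first is the definition of the order. The second holds because $-1$ is the unique element of order $2$ in the cyclic group $(\Z/p\Z)^{*}$: the subgroup $\langle\ell\rangle$ contains an element of order $2$ iff $2\mid d$, and such an element must then be $-1=\ell^{d/2}$. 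Part 1 is exactly the second fact.

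The engine for the remaining parts is the minimal-solution principle: the solutions of each congruence form an arithmetic progression whose least positive term is $n=d$ (for $+1$) respectively $n=d/2$ (for $-1$), so an admissible index exists in a window $[1,N]$ iff that least term is $\le N$. Two observations convert the size conditions into the order conditions in the statement. First, since $p$ is odd, for an integer $d$ one has $d\le (p-3)/2\iff d<(p-1)/2$; this matches the window bound in parts 4 and 6 with the stated strict inequality. Second, the largest proper divisor of the even number $p-1$ equals $(p-1)/2$, so any divisor $d$ of $p-1$ with $d\neq p-1$ already satisfies $d\le (p-1)/2$; moreover for $p\ge 5$ the only even divisor of $p-1$ exceeding $p-3$ is $p-1$ itself.

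Parts 2, 4 and 6 are then immediate. For part 4 the least solution is $d\le (p-3)/2$, which by the first observation is $d<(p-1)/2$. Part 6 is literally part 4 applied to $\ell^2$ in place of $\ell$, which is legitimate since $p\neq\ell$ gives $p\nmid\ell^2$, with $\ord_p(\ell^2)\mid p-1$ playing the role of $d$. For part 2 the least solution is $d/2\le (p-3)/2$, i.e.\ $d\le p-3$, and since $d$ is an even divisor of $p-1$ this holds precisely when $d\neq p-1$, by the second observation.

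For parts 3 and 5, which use the longer window $[1,p-2]$ with the single index $(p-1)/2$ deleted, I would locate all solutions in the window and check whether the deleted index is the only one. For the $-1$ equation the solutions are the odd multiples of $d/2$: if $d$ is even with $d\neq p-1$ then $d\le(p-1)/2$ and the least solution $d/2$ is already $<(p-1)/2$, hence survives the deletion; if $d=p-1$ the unique solution in $[1,p-2]$ is exactly $(p-1)/2$, which is deleted. For the $+1$ equation the solutions are the multiples of $d$: if $d<(p-1)/2$ the least solution $d$ lies below $(p-1)/2$ and survives, whereas if $d\in\{(p-1)/2,p-1\}$ the window contains at most the single index $(p-1)/2$, again deleted. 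In each borderline case both sides of the equivalence are false. This endpoint bookkeeping in parts 3 and 5 is the only subtle point, together with the degenerate prime $p=3$, where every window $[1,(p-3)/2]$ is empty and one checks directly that both sides are false (vacuously in the odd-order case); everything else follows from the minimal-solution principle and the two observations above.
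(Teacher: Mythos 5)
Your proof is correct and complete, and it follows exactly the standard order-theoretic argument the paper intends: the paper's own ``proof'' simply leaves the lemma to the reader, citing Moree's Prop.\,2 for the basic fact underlying your part 1, and your minimal-solution bookkeeping (least solution $d$ for $\ell^n\equiv 1$, least solution $d/2$ for $\ell^n\equiv -1$, plus the divisor structure of $p-1$) is precisely the reasoning being omitted. One small point worth making explicit: your argument necessarily assumes $p$ is odd --- for $p=2$ part 1 is in fact false, since $2\mid \ell^n+1$ always while $\ord_2(\ell)=1$ --- which is harmless here because the lemma is only applied to odd $p$ in the paper.
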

\begin{proof}
Left to the reader
(cf.\,Moree \cite[Prop.\,2]{Mordivisors}). 
\end{proof}
\begin{Rem}
\label{rem:odd}
We will use various times the trivial observation that 
\begin{equation}
    \label{eq:trivial}
\ord_p(\ell^2)=
\begin{cases} 
\ord_p(\ell) & \text{if }2\nmid \ord_p(\ell);\\
\ord_p(\ell)/2  & \text{otherwise}.
\end{cases}
\end{equation}
\end{Rem}
\subsubsection{The $H$-sequences}
With the help of Lemma \ref{lem_even} we
will now characterize $H$-, $H^{-}$- and $H^{+}$-irregular primes.
\begin{Lem}\label{lem-H}
Let $p\ne \ell$ be an 
odd prime. \\
{\rm 1)} It is $H$-irregular if and only if it is $B$-irregular or $\ord_p(\ell^2)
<(p-1)/2$. \\
{\rm 2)} It is $H^-$-irregular if and only if it is $B$-irregular or $\ord_p(\ell)
<(p-1)/2$. \\
{\rm 3)} It is $H^+$-irregular if and only if it is $B$-irregular or 
$\ord_p(\ell)$ is even and not equal to $p-1$.\\
{\rm 4)} It is $H$-irregular if and only if it is either  $H^-$- or  $H^+$-irregular.\\
{\rm 5)} It is both $H^-$- and  $H^+$-regular if it is $B$-regular and satisfies $p\equiv 3\bmod 4$ and $\ord_p(\ell)=(p-1)/2$. 
\end{Lem}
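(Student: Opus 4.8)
The plan is to reduce every statement to a valuation count on a single term and then invoke Lemma \ref{lem_even}. Fix an odd prime $p\ne\ell$ and an even index $k$ with $2\le k\le p-3$. The first step, common to parts 1--3, is to observe that no cancellation can occur in the relevant products. Indeed, since $k<p$ we have $\nu_p(k)=0$, and since $p-1\nmid k$ the von Staudt--Clausen theorem \eqref{vsclausen} gives $\nu_p(B_k)\ge 0$; the factors $1-\ell^k$, $1-\ell^{k/2}$, $1+\ell^{k/2}$ are rational integers and so also have non-negative $p$-adic valuation. Writing each term as a product of such factors, every summand in the valuation is non-negative, so $\nu_p$ of the term is positive if and only if $\nu_p$ of one of the factors is. In particular $p\mid H_k$ iff $p\mid B_k$ or $p\mid\ell^k-1$; likewise $p\mid H_k^-$ iff $p\mid B_k$ or $p\mid\ell^{k/2}-1$, and $p\mid H_k^+$ iff $p\mid B_k$ or $p\mid\ell^{k/2}+1$.

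For part 1, ranging over the admissible $k=2m$ shows that $p$ is $H$-irregular iff $p\mid B_k$ for some such $k$ (that is, $p$ is $B$-irregular) or $p\mid\ell^{2m}-1$ for some $1\le m\le(p-3)/2$; by Lemma \ref{lem_even}, part 6, the latter is exactly $\ord_p(\ell^2)<(p-1)/2$. Parts 2 and 3 are identical in structure: writing $k=2n$, the non-Bernoulli condition becomes $p\mid\ell^n-1$, respectively $p\mid\ell^n+1$, for some $1\le n\le(p-3)/2$, and Lemma \ref{lem_even}, part 4, respectively part 2, translates these into $\ord_p(\ell)<(p-1)/2$, respectively $\ord_p(\ell)$ even and not equal to $p-1$.

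For part 4 I would argue term by term, using the factorization $1-\ell^k=(1-\ell^{k/2})(1+\ell^{k/2})$ valid for even $k$. Since $p$ is odd it cannot divide both factors, as their difference is $2\ell^{k/2}$, which is coprime to $p$; hence $\nu_p(1-\ell^k)>0$ iff $\nu_p(1-\ell^{k/2})>0$ or $\nu_p(1+\ell^{k/2})>0$. Combined with the reduction of the first paragraph this gives, for each fixed $k$, that $p\mid H_k$ iff $p\mid H_k^-$ or $p\mid H_k^+$; taking the disjunction over $k$ yields the claim. Alternatively one can deduce part 4 from parts 1--3 via the order identity \eqref{eq:trivial}, at the cost of a short case analysis on the parity of $\ord_p(\ell)$.

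Finally, part 5 is immediate from parts 2 and 3. If $p$ is $B$-regular then it is $H^-$-regular iff $\ord_p(\ell)\ge(p-1)/2$ and $H^+$-regular iff $\ord_p(\ell)$ is odd or equals $p-1$. The hypotheses give $\ord_p(\ell)=(p-1)/2$, which meets the first condition; and $p\equiv 3\bmod 4$ forces $(p-1)/2$ to be odd, which meets the second. The only genuine subtlety in the whole argument is the valuation bookkeeping of the first paragraph---ensuring, via von Staudt--Clausen and $p\nmid k$, that positivity of $\nu_p$ of a term is equivalent to positivity of $\nu_p$ of one of its integer factors---after which each part is a direct application of Lemma \ref{lem_even}.
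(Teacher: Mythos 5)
Your proof is correct and takes essentially the same route as the paper: von Staudt--Clausen \eqref{vsclausen} together with $p\nmid k$ to reduce $p$-divisibility of each term to divisibility of $B_k$ or of the cyclotomic factor, then Lemma~\ref{lem_even} (parts 6, 4, 2) for parts 1)--3), the factorization $1-\ell^k=(1-\ell^{k/2})(1+\ell^{k/2})$ for part 4), and the oddness of $(p-1)/2$ when $p\equiv 3\bmod 4$ for part 5). The only cosmetic difference is that the paper disposes of $p=3$ explicitly (regular by convention), whereas your argument covers that case vacuously since the index range $2\le k\le p-3$ is then empty.
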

\begin{proof}
The claims are clearly true for $p=3$,  and
so we may assumue $p>3$.
It follows from \eqref{vsclausen} that the prime factors of the denominator of $B_{2n}$ are precisely those primes $p$ such that $p-1$ divides $2n$. Therefore, 
\begin{equation}
\label{eq:gelijk}
\nu_p(B_{2n})=\nu_p(B_{2n}/2n),\quad 1\leq n\leq (p-3)/2.
\end{equation}
Suppose that $p$ is $H$-irregular, i.e.\ $\nu_p(H_{2n})\ge 1$ for 
some $1\leq n\leq (p-3)/2$. 
By \eqref{eq:gelijk} and Lemma 
\ref{lem_even}.6 this is equivalent with $p$ being $B$-irregular
or $\ord_{p}(\ell^2)<(p-1)/2$.
\par The proofs of 2) and 3) are analogous, and follow by a similar argument involving
Lemma \ref{lem_even}. Part 4) is a consequence of the observation that $p\mid \ell^{2n}-1$ 
if and only if $p\mid \ell^n-1$  or $p\mid \ell^n+1$.  Finally, part 5) follows from parts
2) and 3) on taking into account the identity \eqref{eq:trivial}.
\end{proof}
\begin{Lem}
\label{lem:Wieferich}
Let $p$ be an odd prime and $n$ a positive integer
such that $p-1$ divides $2n$. Then we have
$$\nu_p(H_{2n})=\nu_p(\ell^{p-1}-1)-1.$$
Further,
$$
\nu_p(H_{2n}^-)=
\begin{cases}
\nu_p(\ell^{p-1}-1)-1 & \text{if~}(p-1)\mid n;\\
\nu_p(\ell^{(p-1)/2}-1)-1 & \text{if~}(p-1)\nmid n\text{~and~}\kronecker{\ell}{p}=1;\\
-1-\nu_p(n)  & \text{if~}(p-1)\nmid n\text{~and~}\kronecker{\ell}{p}=-1.
\end{cases}
$$
Also,
$$
\nu_p(H_{2n}^+)=
\begin{cases}
\nu_p(\ell^{(p-1)/2}+1)-1 & (p-1)\nmid n\text{~and~}\kronecker{\ell}{p}=-1;\\
-1-\nu_p(n)  & otherwise.
\end{cases}
$$
\end{Lem}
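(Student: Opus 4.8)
The plan is to reduce each valuation to the von Staudt--Clausen theorem together with the \emph{lifting-the-exponent} lemma (LTE). I assume throughout that $p\ne\ell$, which is the only relevant case since $H_{2n}$ is an $\ell$-integer. Because $p-1\mid 2n$, the relation \eqref{vsclausen} puts $p$ into the denominator of $B_{2n}$ with exponent exactly one, so $\nu_p(B_{2n})=-1$; and since $p$ is odd, $\nu_p(2n)=\nu_p(n)$. Writing each of $H_{2n},H_{2n}^-,H_{2n}^+$ as in \eqref{notation}, the valuation splits additively, so the whole task reduces to computing $\nu_p(\ell^{2n}-1)$, $\nu_p(\ell^{n}-1)$ and $\nu_p(\ell^{n}+1)$.

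First I would handle $H_{2n}$. Writing $2n=(p-1)m$ we have $\nu_p(m)=\nu_p(2n)=\nu_p(n)$, and since $\ell^{p-1}\equiv1\bmod p$, LTE applied with base $\ell^{p-1}$ gives $\nu_p(\ell^{2n}-1)=\nu_p(\ell^{p-1}-1)+\nu_p(m)$. Substituting into $\nu_p(H_{2n})=\nu_p(\ell^{2n}-1)+\nu_p(B_{2n})-\nu_p(2n)$, the two copies of $\nu_p(n)$ cancel and we are left with $\nu_p(\ell^{p-1}-1)-1$, as claimed.

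For $H_{2n}^-$ and $H_{2n}^+$ the decisive split is whether $(p-1)\mid n$. If $(p-1)\mid n$, then $\ell^n\equiv1\bmod p$; the same LTE computation with base $\ell^{p-1}$ (now with exponent $n/(p-1)$) yields the first line of the $H^-$-formula, while $\ell^n+1\equiv2$ is a $p$-adic unit, giving the ``otherwise'' line of the $H^+$-formula. If instead $(p-1)\nmid n$ while $(p-1)\mid2n$, then $m=2n/(p-1)$ is \emph{odd}, and by Euler's criterion $\ell^n=(\ell^{(p-1)/2})^m\equiv\kronecker{\ell}{p}^m=\kronecker{\ell}{p}\bmod p$. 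When $\kronecker{\ell}{p}=1$ we get $\ell^n\equiv1$, so LTE with base $\ell^{(p-1)/2}$ gives $\nu_p(\ell^n-1)=\nu_p(\ell^{(p-1)/2}-1)+\nu_p(m)$ and hence the second line of the $H^-$-formula, whereas $\ell^n+1$ is a unit and $H^+$ falls into its ``otherwise'' line. When $\kronecker{\ell}{p}=-1$ we get $\ell^n\equiv-1$, so $\ell^n-1$ is a unit (producing $-1-\nu_p(n)$ for $H^-$) while the odd-exponent form of LTE gives $\nu_p(\ell^n+1)=\nu_p(\ell^{(p-1)/2}+1)+\nu_p(m)$, producing the first line of the $H^+$-formula. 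In each surviving subcase $\nu_p(m)=\nu_p(n)$ cancels against $-\nu_p(2n)$.

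The computations themselves are routine; the one point that must be gotten right is the elementary observation that $(p-1)\mid2n$ together with $(p-1)\nmid n$ forces $m=2n/(p-1)$ to be odd. This parity is exactly what licenses the odd-exponent version of LTE for $\ell^n+1$ and what lets Euler's criterion pin down the sign of $\ell^n\bmod p$; without it the case analysis for $H^-$ and $H^+$ collapses. I would therefore isolate LTE, in both its $a^k-b^k$ form and, for odd $k$, its $a^k+b^k$ form, as a preliminary lemma to keep this bookkeeping transparent.
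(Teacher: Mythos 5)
Your proof is correct and takes essentially the same route as the paper's: decompose $\nu_p(H_{2n}^{(\varepsilon)})$ additively, use von Staudt--Clausen to get $\nu_p(B_{2n})=-1$, and apply the lifting-the-exponent observation (which the paper cites to Beyl, in both the $a\equiv 1$ and $a\equiv -1$ forms) to evaluate $\nu_p(\ell^{2n}-1)$ and $\nu_p(\ell^{n}\pm 1)$. The only difference is that you spell out the case analysis (Euler's criterion and the oddness of $2n/(p-1)$ when $(p-1)\nmid n$) that the paper compresses into ``the two remaining statements are proved similarly.''
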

\begin{proof}
Write $2n=(p-1)p^em$, with $p\nmid m$. Then, using \eqref{vsclausen} and
the elementary observation that if $a\equiv 1\bmod p$ and $a\ne 1$, then 
$\nu_p(a^{j}-1)=\nu_p(a-1)+\nu_p(j)$ (cf.\,Beyl \cite{Beyl}), we find that
$$\nu_p(H_{2n})=\nu_p(\ell^{2n}-1)+\nu_p(B_{2n})-\nu_p(2n)=\nu_p(\ell^{p-1}-1)+e-1-e=\nu_p(\ell^{p-1}-1)-1.$$
The two remaining statements are proved similarly, with
the difference that whereas $\nu_p(\ell^{2n}-1)>1$, it can happen 
that $\nu_p(\ell^{n}+\varepsilon)=0$. For the final claim we use that if 
$a\equiv -1\bmod p$ and $a\ne -1$ and $j$ is even, then 
$\nu_p(a^{j}-1)=\nu_p(a+1)+\nu_p(j)$.
\end{proof}
Recall the definitions \eqref{wieferich1} and \eqref{wieferich2} of the Wieferich sets.
\begin{Lem}
\label{lem:wieferich}
Let $\ell$ and $p>2$ be distinct primes. Then\\
{\rm 1)} $p$ divides the $H$-sequence if and only if
it is $H$-irregular or is in the Wieferich set 
${\mathcal W}_{\ell}$;\\
{\rm 2)} $p$ divides the $H^-$-sequence if and only if
it is $H^-$-irregular or is in the Wieferich set 
${\mathcal W}_{\ell}$;\\
{\rm 3)}  $p$ divides the $H^{+}$-sequence if and only if
it is $H^{+}$-irregular or is in the Wieferich set 
${\mathcal W}_{\ell}^{+}$.
\end{Lem}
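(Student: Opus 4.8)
The plan is to prove each of the three equivalences by the same two-step scheme, handling the two implications separately and splitting the harder (forward) implication according to whether $p-1\mid 2n$. Throughout I would write the generic term as a product of an \emph{order factor} ($1-\ell^{2n}$ for $H$, and $1+\varepsilon\ell^{n}$ for $H^{\varepsilon}$) and the \emph{Bernoulli factor} $B_{2n}/(2n)$, and exploit that every term of every sequence is nonzero (as $B_{2n}\ne 0$ and $\ell\ge 2$), so the condition $a_n\ne 0$ in the definition of sequence divisibility is automatic.

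For the reverse implication the irregular case is immediate: by definition $(\cdot)$-irregularity means that $\nu_p$ of some in-range term is at least $1$, and such a term is nonzero, so $p$ divides the sequence. For the Wieferich case I would exhibit a single witnessing index with $p-1\mid 2n$ and apply Lemma~\ref{lem:Wieferich}. For parts 1) and 2) take $n=p-1$, so that $(p-1)\mid n$ and the valuation is $\nu_p(\ell^{p-1}-1)-1\ge 1$ exactly when $p\in\mathcal{W}_\ell$. For part 3) take $n=(p-1)/2$, so that $(p-1)\nmid n$; since $p\in\mathcal{W}_\ell^{+}$ forces $\ell^{(p-1)/2}\equiv-1\bmod p$ and hence $\kronecker{\ell}{p}=-1$, Lemma~\ref{lem:Wieferich} gives $\nu_p(\ell^{(p-1)/2}+1)-1\ge 1$.

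The forward implication is the substance. Suppose $\nu_p$ of some term is at least $1$. If $p-1\nmid 2n$, then the Bernoulli factor is $p$-integral (a standard consequence of \eqref{vsclausen} and the Kummer congruence \eqref{Kummercongruence}), so the whole term is $p$-integral and, working in the field $\Z/p$, vanishes iff at least one of its two factors does. If the Bernoulli factor vanishes modulo $p$, then by \eqref{Kummercongruence} it is congruent to $B_{2m}/(2m)$ for the unique in-range $m$ with $2m\equiv 2n\bmod(p-1)$, whence $p\mid B_{2m}$ and $p$ is $B$-irregular, so $(\cdot)$-irregular by Lemma~\ref{lem-H}. If instead the order factor vanishes modulo $p$, I would run the order analysis that is the crux of the argument: from $\ell^{2n}\equiv 1$ (resp.\ $\ell^{n}\equiv 1$, resp.\ $\ell^{n}\equiv-1$) together with $(p-1)/2\nmid n$, conclude respectively that $\ord_p(\ell^2)<(p-1)/2$, that $\ord_p(\ell)<(p-1)/2$, and that $\ord_p(\ell)$ is even and $\ne p-1$, which are exactly the conditions in Lemma~\ref{lem-H}. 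The elementary fact driving all three is that the only divisors of $p-1$ that are $\ge(p-1)/2$ are $(p-1)/2$ and $p-1$ themselves, so a divisor of $p-1$ that also divides such an $n$ must be strictly smaller; for the case $\ell^{n}\equiv-1$ one additionally notes that this congruence forces the order to be even and, via the same divisibility constraint, rules out $\ord_p(\ell)=p-1$.

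It remains to treat $p-1\mid 2n$, where I would read off the exact valuations from Lemma~\ref{lem:Wieferich}. For $H$ the valuation is $\nu_p(\ell^{p-1}-1)-1$, so $\nu_p\ge 1$ forces $p\in\mathcal{W}_\ell$. For $H^-$ only the subcases $(p-1)\mid n$ and $(p-1)\nmid n$ with $\kronecker{\ell}{p}=1$ can produce a positive valuation, and in both $\ell^{p-1}\equiv 1\bmod p^2$ (in the second case after squaring $\ell^{(p-1)/2}\equiv 1\bmod p^2$), giving $p\in\mathcal{W}_\ell$; the remaining subcase has negative valuation and cannot occur. For $H^+$ only the subcase $(p-1)\nmid n$ with $\kronecker{\ell}{p}=-1$ survives, and there $\nu_p\ge 1$ is equivalent to $\ell^{(p-1)/2}\equiv-1\bmod p^2$, i.e.\ $p\in\mathcal{W}_\ell^{+}$. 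I expect the main obstacle to be not these mechanical valuation computations but the order analysis in the case $p-1\nmid 2n$: matching the three order conditions to Lemma~\ref{lem-H} exactly, and in particular correctly separating the roles of $\ord_p(\ell)$ and $\ord_p(\ell^2)$, is where care is required.
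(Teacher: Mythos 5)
Your proposal is correct and follows essentially the same route as the paper: the same case split according to whether $p-1\mid 2n$, with Lemma \ref{lem:Wieferich} disposing of the indices with $p-1\mid 2n$, the Kummer congruence \eqref{Kummercongruence} handling the Bernoulli factor, and Lemma \ref{lem-H} converting the resulting order conditions into irregularity. The only organizational difference is that where the paper reduces to finitely many indices via periodicity of the whole term and then cites Lemma \ref{lem_even}.3/5, you factor each term modulo $p$ and re-derive the order facts inline for an arbitrary index with $p-1\nmid 2n$, which is equally valid.
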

\begin{proof}
We will only
deal with cases 2) and 3), since case 1) is similar and easier.
It is not difficult to see
that only finitely many terms of the $H^{\varepsilon}$-sequence have to be considered in order to decide whether $p$ divides the sequence
or not. 
Indeed, the Kummer congruence \eqref{Kummercongruence} implies
that if $p-1\nmid 2n$, then
\[
    \frac{B_{2n+r(p-1)}}{2n+r(p-1)}\Big(1+\varepsilon \ell^{n+r(p-1)/2}\Big)\equiv 
    \frac{B_{2n}}{2n}\Big(1+\varepsilon\kronecker{\ell}{p}^r\ell^{n}\Big)\pmod*{p}.
\]
Thus we have periodicity modulo 
$p-1$ if $\kronecker{\ell}{p}=1$ and modulo
$2(p-1)$ otherwise. 
In particular, if $(p-1)\nmid 2n$ it is enough to consider the $p$-divisibility of
$$H_2^{\varepsilon},H_4^{\varepsilon}\ldots,H_{p-3}^{\varepsilon},H_{p+1}^{\varepsilon},
H_{p+3}^{\varepsilon},\ldots,H_{2p-4}^{\varepsilon}$$
The case $p-1\mid 2n$ is not covered, but
for this we invoke 
Lemma \ref{lem:Wieferich}, which shows that in this
case $p$ divides 
the $H^-$-sequence if and only $p$ is in 
 ${\mathcal W}_{\ell}$ and divides the 
 $H^+$-sequence if and only $p$ is in 
 ${\mathcal W}_{\ell}^+$.
So we restrict
now to the case $p-1\nmid 2n$.
\par 2) Now $\varepsilon=-1$.
By Lemma \ref{lem_even}.5 the prime $p$ is 
a divisor if and only if it is $B$-irregular or 
$\ord_{p}(\ell)<(p-1)/2$. By Lemma \ref{lem-H}.2 this is equivalent with
$p$ being $H^{-}$-irregular.
\par 3) Now $\varepsilon=1$. By Lemma \ref{lem_even}.3 the prime $p$ is 
a divisor if and only if it is $B$-irregular or 
$\ord_{p}(\ell)$ is even and not equal to $p-1$. By 
Lemma \ref{lem-H}.3 this is equivalent with
$p$ being $H^{+}$-irregular.
\end{proof}
\subsubsection{The $\ell$-Genocchi numbers}
Since $H_n=G_n/\ell n$, it is natural to wonder how $H$-(ir)regular and
$\ell$-Genocchi (ir)regular primes are related.
\begin{Prop}
\label{prop:HvG}
An odd prime $p\neq\ell$ is $H$-irregular if and only if $p$ is $\ell$-Genocchi irregular. If $\ell>3$, then $\ell$ is $\ell$-Genocchi irregular, and $\ell$ is $H$-irregular if and only if it is $B$-irregular.
\end{Prop}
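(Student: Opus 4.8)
The plan is to exploit the defining relation $H_{2n}=G_{2n}/(2\ell n)$ and to compare $p$-adic valuations term by term, splitting into the case $p\ne\ell$ and the case $p=\ell$. Throughout, ``irregular'' refers to the divisibility of the finitely many terms indexed by $1\le n\le(p-3)/2$ (respectively $1\le n\le(\ell-3)/2$), and I would handle the exceptional prime $3$ first: by definition $3$ is declared regular for every sequence, so when $\ell\ne 3$ the prime $p=3$ is neither $H$-irregular nor $\ell$-Genocchi irregular and the biconditional holds vacuously.

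For $p\ne\ell$ with $p>3$, I would fix an index $n$ in the range $1\le n\le(p-3)/2$. Then $n<p$ forces $\nu_p(n)=0$, and since $p$ is odd and $p\ne\ell$ we also have $\nu_p(2\ell)=0$. Taking $\nu_p$ of $H_{2n}=G_{2n}/(2\ell n)$ therefore gives $\nu_p(H_{2n})=\nu_p(G_{2n})$ for every such $n$. Consequently some valuation $\nu_p(H_{2n})$ in the range is positive exactly when some $\nu_p(G_{2n})$ is, which is precisely the statement that $p$ is $H$-irregular if and only if it is $\ell$-Genocchi irregular.

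For $p=\ell$ with $\ell>3$ the range $1\le n\le(\ell-3)/2$ is non-empty, and I would again compute valuations termwise. Since $2n\le\ell-3<\ell-1$ we have $\ell-1\nmid 2n$, so von Staudt--Clausen \eqref{vsclausen} shows $\ell$ is absent from the denominator of $B_{2n}$, i.e.\ $\nu_\ell(B_{2n})\ge 0$. Also $\ell^{2n}\equiv 0\bmod\ell$ gives $\nu_\ell(1-\ell^{2n})=0$, and $\nu_\ell(2n)=0$ as before. For the $\ell$-Genocchi numbers the extra factor $\ell$ then yields $\nu_\ell(G_{2n})=1+\nu_\ell(B_{2n})\ge 1$ for \emph{every} such $n$, so $\ell$ is automatically $\ell$-Genocchi irregular. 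For the $H$-sequence the same bookkeeping gives $\nu_\ell(H_{2n})=\nu_\ell(B_{2n})$, which is $\ge 1$ exactly when $\ell$ divides the numerator of $B_{2n}$; hence $\ell$ is $H$-irregular if and only if $\ell$ divides some numerator $B_{2n}$ with $1\le n\le(\ell-3)/2$, that is, if and only if $\ell$ is $B$-irregular.

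All the steps are elementary valuation computations; the only point needing care is the verification that $\ell$ does not occur in the denominators of the $B_{2n}$ in the stated range, since the clean identity $\nu_\ell(H_{2n})=\nu_\ell(B_{2n})$ (and the force of the factor $\ell$ in $G_{2n}$) depends on $\nu_\ell(B_{2n})\ge 0$. This is exactly where the bound $2n\le\ell-3$, and hence $\ell-1\nmid 2n$, is invoked through von Staudt--Clausen.
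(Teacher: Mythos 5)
Your proof is correct and follows essentially the same route as the paper's: termwise comparison of $p$-adic valuations via $G_{2n}=2\ell n H_{2n}$, using that $p\nmid 2\ell n$ in the relevant range when $p\neq\ell$, and von Staudt--Clausen (through $\ell-1\nmid 2n$) to get $\nu_\ell(B_{2n})\ge 0$ when $p=\ell>3$. Your treatment is just slightly more detailed, e.g.\ in explicitly disposing of the prime $3$ and in spelling out where $\nu_\ell(B_{2n})\ge 0$ is needed.
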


\begin{proof}
If $p\neq\ell$, then for every $n=1,2,\ldots,(p-3)/2$, since $p$ and $p-1$ do not divide $2n$, we have that $\nu_p(H_{2n})=\nu_p(G_{2n})$.
If $\ell>3$, then for every $n=1,2,\ldots,(\ell-3)/2$, since $\ell$ and $\ell-1$ do not divide $2n$, we have that $\nu_\ell(\ell(1-\ell^{2n})B_{2n})\geq1$, and $\nu_\ell((1-\ell^{2n})B_{2n}/2n)=\nu_\ell(B_{2n})$.
\end{proof}
In contrast, the prime divisor structure of $G$- and $H$-sequences are rather
different.
\begin{Prop}
Let $\ell$ be a fixed prime. Given any prime $p$ (for $\ell\le 3$ we suppose $p\neq\ell$), there exists an integer $n$ such that $v_p(G_{2n})\geq1$. If $\ell\le 3$, then $v_\ell(G_{2n})= 0$ for all $n$.
\end{Prop}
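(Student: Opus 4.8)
The plan is to read off $\nu_p(G_{2n})$ directly from the factorization $G_{2n}=\ell(1-\ell^{2n})B_{2n}$ by splitting it into the three contributions $\nu_p(\ell)$, $\nu_p(\ell^{2n}-1)$ and $\nu_p(B_{2n})$, the last of which is pinned down by von Staudt--Clausen \eqref{vsclausen}: one has $\nu_p(B_{2n})=-1$ exactly when $p-1\mid 2n$, and $\nu_p(B_{2n})\ge 0$ otherwise. I would then organize the argument according to whether $p=\ell$ or $p\neq\ell$.

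For the second assertion, suppose $\ell\in\{2,3\}$. Since then $\ell-1\in\{1,2\}$ divides every $2n$, von Staudt--Clausen forces $\nu_\ell(B_{2n})=-1$ for all $n$; as $\nu_\ell(\ell)=1$ and $\nu_\ell(1-\ell^{2n})=0$ (because $1-\ell^{2n}\equiv 1\bmod\ell$), the three contributions sum to $\nu_\ell(G_{2n})=1+0-1=0$ for every $n$. This also explains why $p=\ell$ must be excluded from the first assertion when $\ell\le 3$. When instead $p=\ell>3$, the existence of a suitable $n$ is already recorded in Proposition \ref{prop:HvG}, which states that $\ell$ is $\ell$-Genocchi irregular; concretely $n=1$ works, since $\ell-1\ge 4$ does not divide $2$, so $\nu_\ell(B_2)\ge 0$ and hence $\nu_\ell(G_2)=1+\nu_\ell(B_2)\ge 1$.

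The substantive case is $p\neq\ell$. The naive idea---pick $n$ with $p\mid\ell^{2n}-1$ so that $\nu_p(\ell^{2n}-1)\ge 1$---does not by itself yield $\nu_p(G_{2n})\ge 1$, because $p$ may simultaneously divide the denominator of $B_{2n}$ and contribute $-1$. The main obstacle is that one cannot always dodge this cancellation: if $\ell$ is a primitive root modulo $p$, then every $n$ with $p\mid\ell^{2n}-1$ already satisfies $p-1\mid 2n$. I would resolve this by deliberately working in the regime $p-1\mid 2n$ and extracting a lifting-the-exponent gain. For $p$ odd take $n=p(p-1)/2$, so that $p-1\mid 2n$ and $\nu_p(n)=1$. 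Writing $G_{2n}=2\ell n H_{2n}$ and using $\nu_p(2\ell n)=\nu_p(n)$ (valid since $p$ is odd and $p\neq\ell$), Lemma \ref{lem:Wieferich} gives $\nu_p(H_{2n})=\nu_p(\ell^{p-1}-1)-1$, whence $\nu_p(G_{2n})=\nu_p(n)+\nu_p(H_{2n})=\nu_p(\ell^{p-1}-1)\ge 1$, the last inequality being Fermat's little theorem. The remaining case $p=2$ (with $\ell$ odd) is immediate from $\nu_2(G_2)=\nu_2(\ell^2-1)+\nu_2(B_2)\ge 3-1=2$, using $8\mid\ell^2-1$ for odd $\ell$ together with $\nu_2(B_2)=-1$. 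Taken together these cases establish the first assertion.
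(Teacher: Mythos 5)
Your proof is correct and follows essentially the same route as the paper: for $p\neq\ell$ both arguments take $2n=p(p-1)$ and exploit that $\nu_p(\ell^{p(p-1)}-1)\ge 2$ while $\nu_p(B_{2n})=-1$ (you package this via Lemma \ref{lem:Wieferich}, the paper verifies $\ell^{2n}\equiv 1\bmod p^2$ directly), and the $\ell\le 3$ case is handled identically. The only cosmetic differences are your separate treatment of $p=2$ (forced by Lemma \ref{lem:Wieferich} requiring $p$ odd, whereas the paper's direct computation covers $p=2$ uniformly) and your choice $n=1$ instead of the paper's $n=(\ell-3)/2$ when $p=\ell>3$.
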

\begin{proof}
If $p\neq\ell$, we take $2n=p(p-1)$. Then $\ell^{2n}\equiv1\bmod p^2$ and $\nu_p(B_{2n})=-1$, so that $\nu_p(G_{2n})\geq1$.
If $p=\ell$ (and $\ell>3$), then we take $n=(\ell-3)/2$ and find that
$\nu_{\ell}(G_{2n})\ge 1$.
If $\ell\le 3$, then  $\ell-1\mid 2n $ and
hence $\nu_\ell(B_{2n})=-1$, so that $\nu_\ell(G_{2n})=0$.
\end{proof}

\begin{Prop}
\label{prop:Hdivisor}
A prime $p$ divides the $H$-sequence if and only if it is 
$\ell$-Genocchi irregular or in the Wieferich set ${\mathcal W}_{\ell}$.
\end{Prop}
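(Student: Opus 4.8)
The plan is to obtain the proposition as a direct chaining of the two equivalences already established: Lemma~\ref{lem:wieferich} characterises divisibility of the $H$-sequence in terms of $H$-irregularity together with a Wieferich condition, while Proposition~\ref{prop:HvG} identifies $H$-irregularity with $\ell$-Genocchi irregularity. Composing these two biconditionals should give the claim with essentially no extra work in the generic case, and in particular both directions of the stated ``if and only if'' come for free.

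Concretely, I would first restrict to an odd prime $p\neq\ell$, this being exactly the range in which Lemma~\ref{lem:wieferich} applies. Part~1) of that lemma states that $p$ divides the $H$-sequence if and only if $p$ is $H$-irregular or $p\in\mathcal{W}_{\ell}$. I would then feed in the first assertion of Proposition~\ref{prop:HvG}, namely that an odd prime $p\neq\ell$ is $H$-irregular precisely when it is $\ell$-Genocchi irregular. Replacing ``$H$-irregular'' by ``$\ell$-Genocchi irregular'' in the lemma yields exactly the assertion that $p$ divides the $H$-sequence if and only if $p$ is $\ell$-Genocchi irregular or $p\in\mathcal{W}_{\ell}$. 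No computation beyond quoting the two results is needed here.

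The delicate point, and the step I expect to be the main obstacle, is the behaviour at the excluded primes $p=2$ and $p=\ell$, which fall outside the hypotheses of Lemma~\ref{lem:wieferich} and would have to be analysed directly. For $p=\ell$ the factor $1-\ell^{2n}$ is a $p$-adic unit, so that $\nu_\ell(H_{2n})=\nu_\ell(B_{2n}/2n)$, and via von Staudt--Clausen~\eqref{vsclausen} together with the Kummer congruence~\eqref{Kummercongruence} one can reduce the question of whether $\ell$ divides the $H$-sequence to the ordinary $B$-irregularity of $\ell$. Since this condition need not coincide with the unconditional $\ell$-Genocchi irregularity of $\ell$ recorded in Proposition~\ref{prop:HvG}, the statement is genuinely meant for $p\neq\ell$ (consistent with the standing hypotheses inherited from Lemma~\ref{lem:wieferich}), and a similar direct valuation computation is required to settle $p=2$. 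Once the intended range of $p$ is fixed in this way, the proof collapses to the two-line substitution above; the only real subtlety lies in recognising and handling these boundary primes.
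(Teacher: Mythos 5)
Your proof is correct and takes essentially the same route as the paper, whose entire proof reads ``This follows on combining Lemma \ref{lem:wieferich}.1 and Proposition \ref{prop:HvG}.'' Your extra paragraph on the boundary primes $p=2$ and $p=\ell$ is a sensible caution (the paper silently inherits the restriction to odd $p\neq\ell$ from the hypotheses of the two cited results), but it does not alter the substance of the argument.
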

\begin{proof}
This follows on combining Lemma \ref{lem:wieferich}.1 and
Proposition \ref{prop:HvG}.
\end{proof}

\begin{Prop}\label{lem_Greg}
Let $\ell$ be  a prime. An odd prime $p$, with $p\neq\ell$, is $\ell$-Genocchi regular if and only if it is $B$-regular and $\ord_p(\ell^2)=(p-1)/2$.
\end{Prop}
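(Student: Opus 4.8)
The plan is to reduce everything to the already-proved characterizations of $H$-irregular primes, so that no new computation with Bernoulli numbers or Voronoi's congruence is needed. First I would invoke Proposition \ref{prop:HvG}: for an odd prime $p\ne\ell$ it asserts that $p$ is $\ell$-Genocchi irregular if and only if it is $H$-irregular, and therefore, by taking negations, $p$ is $\ell$-Genocchi regular if and only if it is $H$-regular. This is the whole reason the statement excludes $p=\ell$ and restricts to odd $p$, so the hypotheses match exactly.

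Next I would apply Lemma \ref{lem-H}.1, which says that $p$ is $H$-irregular precisely when it is $B$-irregular or $\ord_p(\ell^2)<(p-1)/2$. Negating both sides, $p$ is $H$-regular if and only if it is $B$-regular and $\ord_p(\ell^2)\ge (p-1)/2$. Combined with the previous paragraph, $p$ is $\ell$-Genocchi regular if and only if it is $B$-regular and $\ord_p(\ell^2)\ge (p-1)/2$. It then remains only to turn the inequality into the equality claimed in the statement.

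The one genuine (but very small) point is therefore to show that one always has $\ord_p(\ell^2)\le (p-1)/2$, so that $\ord_p(\ell^2)\ge(p-1)/2$ forces $\ord_p(\ell^2)=(p-1)/2$. Here I would use the identity \eqref{eq:trivial} of Remark \ref{rem:odd}. If $\ord_p(\ell)$ is even, then $\ord_p(\ell^2)=\ord_p(\ell)/2$, and since $\ord_p(\ell)\mid(p-1)$ this is at most $(p-1)/2$. If $\ord_p(\ell)$ is odd, then $\ord_p(\ell^2)=\ord_p(\ell)$ is an odd divisor of $p-1$; because $p$ is odd the number $p-1$ is even, so every odd divisor of it is at most $(p-1)/2$. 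In either case $\ord_p(\ell^2)\le(p-1)/2$, which completes the equivalence.

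I do not expect any real obstacle: the statement is a bookkeeping corollary of Proposition \ref{prop:HvG}, Lemma \ref{lem-H}.1 and the order bound above, the last being the only place where a (one-line) argument is actually carried out. The only thing to be careful about is to invoke the correct parts of the earlier lemmas and to note explicitly that the maximal possible value of $\ord_p(\ell^2)$ is exactly $(p-1)/2$, which is what promotes the inequality to an equality.
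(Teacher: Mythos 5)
Your proposal is correct and follows essentially the same route as the paper: the paper's proof for odd $\ell$ consists precisely of combining Proposition \ref{prop:HvG} with Lemma \ref{lem-H}.1 (and for $\ell=2$ it cites Hu et al.\ \cite{HKMS}), exactly as you do. Your two small additions are fine and even welcome: you make explicit why $\ord_p(\ell^2)\le (p-1)/2$ always holds, which is what turns the inequality obtained by negating Lemma \ref{lem-H}.1 into the stated equality (the paper leaves this implicit), and your argument covers $\ell=2$ uniformly rather than by citation, since neither Proposition \ref{prop:HvG} nor Lemma \ref{lem-H}.1 requires $\ell$ to be odd.
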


\begin{proof}
For $\ell=2$ it was shown by  Hu et 
al.\,\cite[Theorem 1.8]{HKMS}. 
For the remaining $\ell$ it follows from Proposition \ref{prop:HvG} and Lemma \ref{lem-H}.1.
\end{proof}
\begin{Cor}
\label{cor:simple}
Let $\ell$ be a prime.
If $p\equiv 1\bmod{4}$ and $\kronecker{\ell}{p}=1$, then $p$ is 
$\ell$-Genocchi irregular.
\end{Cor}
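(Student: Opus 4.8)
The plan is to derive the corollary directly from Proposition \ref{lem_Greg}, which states that an odd prime $p\neq\ell$ is $\ell$-Genocchi regular exactly when it is $B$-regular and $\ord_p(\ell^2)=(p-1)/2$. Since the hypothesis $\kronecker{\ell}{p}=1$ already forces $p\neq\ell$ (the Jacobi symbol would be $0$ otherwise), it suffices to show that under the two stated conditions the order condition fails, that is $\ord_p(\ell^2)\neq(p-1)/2$. Indeed, if I can establish the strict inequality $\ord_p(\ell^2)<(p-1)/2$, then $p$ cannot be $\ell$-Genocchi regular regardless of whether it is $B$-regular, and hence it is $\ell$-Genocchi irregular.

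First I would record the two elementary consequences of the hypotheses. The condition $\kronecker{\ell}{p}=1$ means that $\ell$ is a quadratic residue modulo $p$, so $\ell^{(p-1)/2}\equiv 1\bmod p$ and therefore $\ord_p(\ell)\mid(p-1)/2$. The condition $p\equiv 1\bmod 4$ means $4\mid p-1$, so that $(p-1)/2$ is even.

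The key step is then a short case split governed by the parity of $\ord_p(\ell)$, using the identity \eqref{eq:trivial} of Remark \ref{rem:odd}. If $\ord_p(\ell)$ is even, then $\ord_p(\ell^2)=\ord_p(\ell)/2\le\frac12\cdot\frac{p-1}{2}=(p-1)/4<(p-1)/2$. If $\ord_p(\ell)$ is odd, then $\ord_p(\ell^2)=\ord_p(\ell)$; but an odd divisor of the even number $(p-1)/2$ is necessarily a proper divisor, hence at most $(p-1)/4$, again giving $\ord_p(\ell^2)<(p-1)/2$. In both cases the strict inequality holds, which completes the argument.

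This corollary is essentially routine once Proposition \ref{lem_Greg} is available; the only point requiring care—and the place where the hypothesis $p\equiv 1\bmod 4$ is genuinely used—is the odd-order case, where one must invoke the parity of $(p-1)/2$ to rule out the borderline equality $\ord_p(\ell)=(p-1)/2$. The even-order case needs neither $p\equiv 1\bmod 4$ nor any appeal to $B$-regularity.
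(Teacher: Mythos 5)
Your proof is correct and uses essentially the same ingredients as the paper: Proposition \ref{lem_Greg}, the identity \eqref{eq:trivial}, Euler's criterion for $\kronecker{\ell}{p}=1$, and the evenness of $(p-1)/2$ coming from $p\equiv 1\bmod 4$. The only difference is organizational—the paper argues by contradiction (assuming $\ord_p(\ell^2)=(p-1)/2$ and deducing $\ord_p(\ell)=p-1$, contradicting the residue hypothesis), whereas you directly establish $\ord_p(\ell^2)<(p-1)/2$ via a parity case split—so this is the same proof in contrapositive clothing.
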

\begin{proof}
By contradiction. Suppose that $\ord_p(\ell^2)=(p-1)/2$.
The assumption $p\equiv 1\bmod{4}$ ensures that $\ord_p(\ell)=p-1$, contradicting
the assumption that $\kronecker{\ell}{p}=1$.
\end{proof}
\begin{Prop}
Let $\ell$ be a prime. If $p$ is congruent to an odd square modulo $4\ell$,
then $p$ is $\ell$-Genocchi irregular.
\end{Prop}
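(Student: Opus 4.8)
The plan is to reduce the statement to Corollary~\ref{cor:simple}, which guarantees that any prime $p\equiv 1\bmod 4$ with $\kronecker{\ell}{p}=1$ is $\ell$-Genocchi irregular. Write the hypothesis as $p\equiv s^2\bmod{4\ell}$ with $s$ odd. The first step is to reduce this congruence modulo $4$: since $4\mid 4\ell$ and every odd square is $\equiv 1\bmod 8$ (in particular $\equiv 1\bmod 4$), we obtain $p\equiv 1\bmod 4$. Thus $p\ge 5$ and in particular $p\ne 2,3$, so $p$ is an odd prime to which the earlier results apply.

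Next I would isolate the degenerate case $p=\ell$. Here $p\equiv 1\bmod 4$ forces $\ell=p\ge 5>3$, and Proposition~\ref{prop:HvG} then states directly that $\ell$ is $\ell$-Genocchi irregular, which settles this case. It is worth emphasising that $p=\ell$ really can occur under the hypothesis: for instance $5\equiv 15^2\bmod{20}$ with $15$ odd, so this case cannot simply be dismissed.

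Assuming now $p\ne\ell$, the remaining task is to establish $\kronecker{\ell}{p}=1$ and then apply Corollary~\ref{cor:simple}. If $\ell$ is odd, I would reduce $p\equiv s^2\bmod{4\ell}$ modulo $\ell$ to get $p\equiv s^2\bmod\ell$; since $p\ne\ell$ we have $\ell\nmid s$, so $p$ is a nonzero quadratic residue modulo $\ell$ and hence $\kronecker{p}{\ell}=1$. Quadratic reciprocity, combined with $p\equiv 1\bmod 4$ (which makes $(p-1)/2$ even and thereby trivialises the reciprocity sign), gives $\kronecker{\ell}{p}=\kronecker{p}{\ell}=1$. If instead $\ell=2$, then $4\ell=8$ and reducing modulo $8$ yields $p\equiv s^2\equiv 1\bmod 8$, so the supplementary law gives $\kronecker{2}{p}=1$. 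In either case Corollary~\ref{cor:simple} applies and the proof is complete.

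The argument is largely routine, and the only point demanding attention is the case split $p=\ell$ versus $p\ne\ell$: when $p=\ell$ one has $\kronecker{\ell}{p}=0$, so Corollary~\ref{cor:simple} is inapplicable, and as noted this situation genuinely arises. Everything else is a clean assembly of a reduction of the congruence, quadratic reciprocity with its sign controlled by $p\equiv 1\bmod 4$, and the two cited results.
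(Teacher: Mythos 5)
Your proof is correct and follows the same basic route as the paper: both reduce the statement to Corollary \ref{cor:simple} by showing $p\equiv 1\bmod 4$ and $\kronecker{\ell}{p}=1$. The paper handles $\ell=2$ by direct verification and, for odd $\ell$, cites an alternate (Euler) form of quadratic reciprocity: for \emph{distinct} odd primes $p$ and $q$, one has $\kronecker{q}{p}=1$ if and only if $p\equiv\pm\beta^2\bmod{4q}$ for some odd $\beta$. You instead unpack this from the standard reciprocity law, reducing the hypothesis modulo $\ell$ to get $\kronecker{p}{\ell}=1$ and using $p\equiv1\bmod4$ to trivialise the reciprocity sign; this is the same mathematical content, just self-contained. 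The genuinely valuable difference is your treatment of $p=\ell$: this case really does occur (e.g.\ $5\equiv 15^2\bmod{20}$; more generally $\ell\equiv\ell^2\bmod{4\ell}$ whenever $\ell\equiv1\bmod4$), and there Corollary \ref{cor:simple} is inapplicable since $\kronecker{\ell}{p}=0$, while the Euler lemma only speaks of distinct primes. You dispose of this case via Proposition \ref{prop:HvG} ($\ell>3$ is always $\ell$-Genocchi irregular). The paper's proof, as written, passes over this degenerate case silently, so your version is the more complete one.
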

For $\ell=2$ one verifies this directly.
For odd $\ell$ it follows 
from Corollary \ref{cor:simple} on making use of an alternate form 
of the  law of
quadratic reciprocity, initially conjectured by Euler, cf.\,Cox \cite[p.\,15]{Cox}.
\begin{Lem}
If $p$ and $q$ are distinct odd primes, then
$\kronecker{q}{p}=1$ if and only if $p\equiv \pm \beta^2 \bmod{4q}$ for some
odd integer $\beta$.
\end{Lem}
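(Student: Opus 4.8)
The plan is to deduce this as a direct repackaging of the ordinary quadratic reciprocity law, together with the first supplementary law $\kronecker{-1}{q}=(-1)^{(q-1)/2}$, using the Chinese remainder theorem to separate the modulus $4q$ into its coprime parts $4$ and $q$. The key observation is that since $\beta$ is odd, $\beta^2\equiv 1\pmod*{4}$, so the component modulo $4$ of the congruence $p\equiv\pm\beta^2\pmod*{4q}$ carries no information about $\beta$ but instead pins down the sign. Writing $\sigma=\kronecker{-1}{p}=(-1)^{(p-1)/2}$, so that $\sigma=1$ when $p\equiv1\pmod*{4}$ and $\sigma=-1$ when $p\equiv3\pmod*{4}$, I would note that $p\equiv\sigma\pmod*{4}$; hence the sign in $\pm\beta^2$ is forced to equal $\sigma$, since the opposite sign would contradict $p\bmod 4$.

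With the sign determined, the congruence reduces to its component modulo $q$, namely $\sigma p\equiv\beta^2\pmod*{q}$, i.e.\ to the condition that $\sigma p$ be a quadratic residue modulo $q$, or $\kronecker{\sigma p}{q}=1$. The oddness requirement on $\beta$ is harmless here: if $\beta^2\equiv\sigma p\pmod*{q}$ with $\beta$ even, one replaces $\beta$ by $\beta+q$, which is odd and has the same square modulo $q$, so by the Chinese remainder theorem $p\equiv\pm\beta^2\pmod*{4q}$ for some odd $\beta$ holds if and only if $\kronecker{\sigma p}{q}=1$. It then remains to establish the identity $\kronecker{q}{p}=\kronecker{\sigma p}{q}$, which gives both directions of the lemma at once. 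Expanding $\kronecker{\sigma p}{q}=\kronecker{\sigma}{q}\kronecker{p}{q}$ and invoking reciprocity in the form $\kronecker{q}{p}=(-1)^{\frac{p-1}{2}\frac{q-1}{2}}\kronecker{p}{q}$, I would check the two residue classes of $p$ modulo $4$: when $p\equiv1$ one has $\sigma=1$ and a trivial reciprocity sign, so both sides equal $\kronecker{p}{q}$; when $p\equiv3$ one has $\sigma=-1$, so $\kronecker{\sigma}{q}=(-1)^{(q-1)/2}$, while the reciprocity sign is also $(-1)^{(q-1)/2}$ because $(p-1)/2$ is odd, and the two sides again agree.

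I do not expect any genuine obstacle: once quadratic reciprocity and its first supplement are granted, the argument is purely bookkeeping. The only points requiring a little care are verifying that the sign forced by $p$ modulo $4$ is precisely the value $\sigma$ for which reciprocity produces the matching correction factor, and recording that the oddness of $\beta$ imposes no real constraint modulo $q$. Both are routine, so the write-up should be short.
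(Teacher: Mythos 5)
Your proof is correct, and it is worth noting that it supplies something the paper itself does not: the paper states this lemma without proof, merely citing Cox (\emph{Primes of the Form $x^2+ny^2$}, p.\,15), where it appears as Euler's original conjectural form of quadratic reciprocity. Your argument is a clean, self-contained derivation from the standard reciprocity law: the Chinese remainder theorem splits the condition modulo $4q$ into its components modulo $4$ and modulo $q$; the oddness of $\beta$ makes the mod-$4$ component pin the sign down to $\sigma=(-1)^{(p-1)/2}$; the mod-$q$ component then says exactly that $\kronecker{\sigma p}{q}=1$; and the identity $\kronecker{q}{p}=\kronecker{\sigma}{q}\kronecker{p}{q}$ is precisely the content of reciprocity plus the first supplement, checked in the two cases $p\equiv 1,3\bmod 4$. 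All steps are sound, including the two small points that need care: replacing an even square root $\beta$ by $\beta+q$ to restore oddness, and the fact that $\beta\not\equiv 0\bmod q$ (forced by $p\neq q$) so that ``square modulo $q$'' genuinely means $\kronecker{\sigma p}{q}=1$. What the paper's citation buys is brevity and historical attribution; what your write-up buys is a verifiable proof from first principles, which would make the paper self-contained on this point. The only stylistic remark is that your bookkeeping in the two residue classes of $p$ modulo $4$ could be compressed into the single identity $\kronecker{\sigma}{q}=(-1)^{\frac{p-1}{2}\frac{q-1}{2}}$, valid for both classes, but this changes nothing of substance.
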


\section{Further preliminaries}
We recall some relevant results and conjectures from the literature.

\subsection{Primitive roots in arithmetic progression} 
\label{sec:thmap}
Put
\[ \mc A_{d,a}=\{p:p\equiv a\bmod d, \, \ord_p(\ell)=p-1  \}.\]
Under GRH the natural density of this set 
is given by 
\begin{equation}
    \label{alphada}
\alpha_{d,a} = \sum_{n=1}^\infty\frac{\varphi(d)\mu(n)c_a(n)}{[\Q(\zeta_{[d,n]},\ell^{1/n}):\Q]},
\end{equation}
where $c_a(n)=1$ if the automorphism $\sigma_a$ of $\Q(\zeta_d)$ determined by $\sigma_a(\zeta_d)=\zeta_d^a$ is the identity on the field $\Q(\zeta_d)\cap\Q(\zeta_{n},\ell^{1/n})$, and $c_a(n)=0$ otherwise.
Its Euler product form was first evaluated
by Moree \cite{MOa} for 
arbitrary $\ell$. In the relevant case for
us where $\ell$ is an odd prime, this result takes on a rather simpler form.
\begin{Thm}\label{thm-pietersap}
Let $a$ and $d$ be coprime integers and $\ell$ be an odd prime.
Put
\[ \alpha_{d,a}=A\, c_1(d,a)\, R(d,a), \]
with
\begin{equation}
\label{eq:rda}
R(d,a) = \prod_{p\mid(a-1,d)}\left( 1-\frac{1}{p} \right) \prod_{p\mid d}\left( 1+\frac{1}{p^2-p-1} \right) 
\end{equation}
and
\[ c_1(d,a) = \begin{cases} 
1-\kronecker{\ell}{a} & \text{if } \ell\equiv 1\bmod 4,\ell\mid d;\\
 1+\frac{1}{\ell^2-\ell-1}  & \text{if } \ell\equiv 1\bmod 4,\ell\nmid d;\\
 1-\kronecker{\ell}{a} & \text{if } \ell\equiv 3\bmod 4,4\mid d,\ell\mid d;\\
 1+\kronecker{-1}{a}\frac{1}{\ell^2-\ell-1}  & \text{if } \ell\equiv 3\bmod 4,4\mid d,\ell\nmid d;\\
1 & \text{if } \ell\equiv 3\bmod 4, 4\nmid d.
\end{cases} \]
Let $\epsilon>0$ be any fixed real number. Then, for every $x$ sufficiently large,
\begin{equation}
    \label{eq:upperap}
\mc A_{d,a}(x)\leq (\alpha_{d,a}+\epsilon)\frac{x}{\varphi(d)\log x}. 
\end{equation}
Assuming GRH, we have
\[ \mc A_{d,a}(x)= \frac{\alpha_{d,a}}{\varphi(d)}
\frac{x}{\log x}+O_d\left( \frac{x\log\log x}{\log^2 x} \right). \]
\end{Thm}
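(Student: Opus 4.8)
The plan is to separate the arithmetic content (evaluating the density \eqref{alphada} as an Euler product) from the analytic content (the upper bound and the GRH asymptotic), handling the former by a direct computation and the latter by Hooley's method in the form generalized by Lenstra \cite{Lenstra}. The computational heart is the Euler product, so I begin there.

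Because $\mu(n)$ annihilates the non-squarefree $n$, I only need the degree $[\Q(\zeta_{[d,n]},\ell^{1/n}):\Q]$ and the indicator $c_a(n)$ for squarefree $n$. Writing $m=[d,n]$, the inclusion $n\mid m$ gives $\zeta_n\in\Q(\zeta_m)$, so $\Q(\zeta_m,\ell^{1/n})/\Q(\zeta_m)$ is a Kummer extension, and by Kummer theory its degree is $n$ divided by the largest $t\mid n$ for which $\ell$ is a $t$-th power in $\Q(\zeta_m)$. For an odd prime $p\mid n$ this defect never occurs, since $\Q(\ell^{1/p})$ is non-normal over $\Q$ and hence cannot embed in the abelian field $\Q(\zeta_m)$; for $p=2$ it occurs precisely when $\sqrt{\ell}\in\Q(\zeta_m)$, that is when the conductor of $\Q(\sqrt{\ell})$ --- equal to $\ell$ if $\ell\equiv1\bmod4$ and to $4\ell$ if $\ell\equiv3\bmod4$ --- divides $m$. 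Thus $[\Q(\zeta_m,\ell^{1/n}):\Q]=\varphi(m)\,n$ except that the degree is halved when $2\mid n$ and $\sqrt{\ell}\in\Q(\zeta_m)$. The same analysis pins down $\Q(\zeta_d)\cap\Q(\zeta_n,\ell^{1/n})$: its maximal subfield abelian over $\Q$ is generated by $\Q(\zeta_{(d,n)})$ and, when $2\mid n$ and $\sqrt{\ell}\in\Q(\zeta_d)$, the element $\sqrt{\ell}$, so $c_a(n)$ factors into a cyclotomic congruence condition and, when the quadratic entanglement is present, a quadratic-character condition producing the symbols $\kronecker{\ell}{a}$ and $\kronecker{-1}{a}$.

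Substituting these two ingredients into \eqref{alphada} and splitting off the degree defect expresses $\alpha_{d,a}$ as a sum of two multiplicative series over squarefree $n$, each of which factors as an Euler product. The generic part (degree $\varphi(m)\,n$) produces the Artin constant $A$ times the rational factor $R(d,a)$ coming from the primes dividing $d$ and those dividing $(a-1,d)$, while the defect part (the halving for even $n$) produces the factor $c_1(d,a)$. Its five-fold case split is exactly the bookkeeping of whether the quadratic entanglement is forced by $d$ (the cases $\ell\mid d$) or only created by $\ell\mid n$ (the cases $\ell\nmid d$), crossed with $\ell\bmod4$ and with $4\mid d$; matching the Euler factors at $2$ and at $\ell$ against the stated $c_1(d,a)$, with the Jacobi symbols coming from the quadratic part of $c_a(n)$, finishes this part.

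For the two counting statements I would follow Hooley. Expressing the condition ``$\ell$ is a primitive root mod $p$'' as ``$\ell$ is a non-$q$-th-power residue for every prime $q\mid p-1$'' and applying inclusion--exclusion gives $\mc A_{d,a}(x)=\sum_n\mu(n)N_{d,a}(x,n)$, where $N_{d,a}(x,n)$ counts the primes $p\le x$ with $p\equiv a\bmod d$ that split completely in $\Q(\zeta_n,\ell^{1/n})$; by Chebotarev this count has density $c_a(n)/[\Q(\zeta_{[d,n]},\ell^{1/n}):\Q]$, which is why the series \eqref{alphada} reappears. The unconditional upper bound \eqref{eq:upperap} is the easy direction: truncating the sieve at a fixed level $N=N(\epsilon)$ bounds $\mc A_{d,a}(x)$ by a finite sum $\sum_{n\mid\prod_{q\le N}q}\mu(n)N_{d,a}(x,n)$ involving only finitely many fixed fields, to each of which unconditional effective Chebotarev applies, and letting $N\to\infty$ makes the truncated sum approach $\alpha_{d,a}/\varphi(d)$ from above. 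The GRH asymptotic is the harder direction and needs Hooley's three-range splitting of $n$ --- GRH-effective Chebotarev for small $n$, the GRH bound for the middle range, and a Brun--Titchmarsh estimate to control the primes whose index $\ord_p(\ell)$ has a large prime factor --- all of which is subsumed in Lenstra's general theorem, from which one reads off both the density $\alpha_{d,a}/\varphi(d)$ and the error term $O_d(x\log\log x/\log^2 x)$. The main obstacle is not the analysis, which is standard once Lenstra's framework is invoked, but getting the five cases of $c_1(d,a)$ and their Jacobi symbols exactly right --- that is, correctly tracking the quadratic entanglement of $\sqrt{\ell}$ with the cyclotomic field.
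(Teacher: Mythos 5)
A preliminary remark on the comparison: the paper itself offers no proof of Theorem \ref{thm-pietersap}. It quotes the Euler product as well known from Moree \cite{MOa,Mor2}, calls the unconditional bound \eqref{eq:upperap} ``totally standard'' (worked out for a related problem in \cite{HKMS}), and the GRH asymptotic comes from the Hooley--Lenstra machinery \cite{Hooley1,Lenstra}. Your proposed architecture --- Kummer/Schinzel theory for the degrees and for the coefficients $c_a(n)$, rearrangement of \eqref{alphada} into Euler products, truncated inclusion--exclusion plus unconditional Chebotarev for \eqref{eq:upperap}, and Lenstra's theorem under GRH --- is precisely this standard route, and it is also how the paper proves its new analogues, Theorems \ref{thm-ap} and \ref{thm_Hminus}. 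Your degree computation and the analytic half of the argument are fine.

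There is, however, a genuine error in the arithmetic step that you yourself identify as the crux. Your description of the intersection --- that $\Q(\zeta_d)\cap\Q(\zeta_n,\ell^{1/n})$ is $\Q(\zeta_{(d,n)})$, enlarged by $\sqrt{\ell}$ exactly when $2\mid n$ and $\sqrt{\ell}\in\Q(\zeta_d)$ --- fails when $\ell\equiv3\bmod4$ and $2\ell\mid n$. In that case $\Q(\zeta_n,\sqrt{\ell})$ contains $\zeta_\ell$, hence $\sqrt{-\ell}$ (Gauss sum), hence $i$; indeed $\Q(\zeta_n,\sqrt{\ell})=\Q(\zeta_{2n})$, so the intersection is $\Q(\zeta_{(d,2n)})$. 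If moreover $4\mid d$ and $\ell\nmid d$, this equals $\Q(\zeta_{2(d,n)})\supsetneq\Q(\zeta_{(d,n)})$, and $c_a(n)=1$ then forces $a\equiv1\bmod4$ on top of $a\equiv1\bmod(d,n)$, even though $\sqrt{\ell}\notin\Q(\zeta_d)$ and your recipe therefore imposes no extra condition. This is the only source of the symbol $\kronecker{-1}{a}$ in the fourth case of $c_1(d,a)$. Concretely, for $\ell\equiv3\bmod4$, $4\mid d$, $\ell\nmid d$ your recipe yields (in the notation of Lemma \ref{lem:Ssums}) $\alpha_{d,a}=S(1)+S_2(\ell)$, i.e.\ $c_1(d,a)=1+\frac{1}{\ell^2-\ell-1}$ for every $a$, whereas for $a\equiv3\bmod4$ the terms with $2\ell\mid n$ must drop out of \eqref{alphada} altogether (rather than contribute with halved degree), giving the correct value $1-\frac{1}{\ell^2-\ell-1}$. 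Your closing claim that the entanglement produces the symbols $\kronecker{\ell}{a}$ and $\kronecker{-1}{a}$ is thus inconsistent with the field description you actually give, which can never produce $\kronecker{-1}{a}$. The repair is the one the paper uses in its analogous computations (Lemmas \ref{lem:actiononsquare}, \ref{lem-coeff} and \ref{lem:fieldintersection}): track the entanglement via $\sqrt{\ell^*}$, $\ell^*=(-1)^{(\ell-1)/2}\ell$, which lies in $\Q(\zeta_\ell)$, and via $i$, rather than via $\sqrt{\ell}$ alone.
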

The unconditional upper bound 
\eqref{eq:upperap} is not given by Moree in either
\cite{MOa}  or \cite{Mor2}, but is totally standard and
for a related problem worked out in detail in Hu et al.\,\cite{HKMS}.
\par We note for future use that if $d$ is odd, then
\begin{equation}
    \label{Rtwo}
    R(d,a)=
\begin{cases}
R(2d,a) & \text{~if~}2\nmid a;\\
R(2d,a+d) & \text{~otherwise}.
\end{cases}
\end{equation}
\begin{Rem}
\label{triviaaldichtheidnul}
Let $\Delta$ denote the discriminant of $\mathbb Q(\sqrt{\ell})$.
In case $\Delta\mid d$ and $\kronecker{\ell}{a}=1$, then using
quadratic reciprocity it is easy to see that $\mc A_{d,a}$ is empty and
so unconditionally $\alpha_{d,a}=0$. By Theorem 
\ref{thm-pietersap}, under GRH, $\alpha_{d,a}=0$ if
and only if $\Delta\mid d$ and $\kronecker{\ell}{a}=1$. 
\end{Rem}
\subsection{Near-primitive roots} 
\label{sec:npr}
Given integers $t\geq1$ and $g$, we set
\[ \mc P(g,t):=\{p:p\equiv1\bmod t, \quad {\rm ord}_p(g)=(p-1)/t  \}.   \]
The primes in $\mc P(g,t)$ are called \emph{near-primitive roots}.
Let $\epsilon>0$ be fixed. By \cite[Theorem 3.1]{HKMS} we have 
\begin{equation}
    \label{eq:upperbound}
\mc P(g,t)(x)\leq(\delta(g,t)+\epsilon)\frac{x}{\log x},
\end{equation}
for every $x$ sufficiently large, where 
\[ \delta(g,t)=\sum_{n=1}^\infty\frac{\mu(n)}{[\Q(\zeta_{nt},g^{1/nt}):\Q]}. \]
Assuming the Riemann Hypothesis for all number fields $\Q(\zeta_{nt},g^{1/nt})$ with $n$ squarefree, we have the sharper estimate
\begin{equation}
    \label{eq:sharper}
     \mc P(g,t)(x)=\delta(g,t)\frac{x}{\log x}+O_{g,t}\left( \frac{x\log\log x}{\log^2x} \right). 
\end{equation}     
Thus, conditionally, the set of primes $\mc P(g,t)$ has natural density $\delta(g,t)$.
This quantity was explicitly computed 
for $t=1$ by Hooley \cite{Hooley1} and for general $t$ by
Moree \cite{near}. It 
always equals a rational number times the
Artin constant, where the rational number may depend on both $g$ and $t$.
\subsection{Divisors of second order recurrences}
\label{sec:secondorder}
Let $\alpha$ and $\beta$ be integers.
The set of prime divisors $\mathcal Q_{\alpha,\beta}$ of the sequence 
$\{\alpha^n+\beta^n\}_{n=1}^{\infty}$ has been well studied. 
A prime $p\nmid \alpha \beta$ divides it if and only if 
$\ord_p(\alpha/\beta)$ is even (Moree \cite[Prop.\,2]{Mordivisors}).
Let $\mathcal Q_{\alpha,\beta}(a,d)$ be the set of primes $p\equiv a\bmod{d}$
in $\mathcal Q_{\alpha,\beta}$.
Moree and Sury \cite{MS} showed that in case $\alpha/\beta>0$, asymptotically,
\begin{equation}
\label{eq:sequencedivisor}
\mathcal Q_{\alpha,\beta}(a,d)(x) =\rho_{\alpha,\beta}(a,d)\frac{x}{\log x}+O\Big(\frac{x \log \log x}{\log^{7/6} x}\Big),\quad x\rightarrow \infty,
\end{equation}
where $\rho_{\alpha,\beta}(c,d)$ is an explicitly computable rational number.
For an informal proof in case $a=d=1$ and other references 
see \cite[Sect.\,9.2]{Moree}.

\subsection{Wieferich sets}\label{sec:wieferich}
Recall the definitions \eqref{wieferich1} and \eqref{wieferich2} of the
Wieferich sets.
The first case of Fermat's Last Theorem (FLTI) is the statement that, for any
 odd prime $p$, the equation $x^p +y^p = z^p$ does not have 
 positive integer solutions where
 none of $x, y, z$ is divisible by $p$. The \emph{generalized Wieferich criterion} 
 (for given $q$) is the statement that if FLTI fails for some prime $p$, then 
 $p\in {\mathcal W}_q$. This criterion has been proved 
 by Granville and Monagan \cite{GM} for all 
 $q\in \{2, 3, 5, 7,\ldots, 89\}$,
 the first 24 primes, continuing work by a great number of mathematicians,
 starting with Wieferich ($q=2$) and Mirimanoff ($q=3$). For $q=2$
 the only Wieferich numbers known below $10^{17}$ are $1093$ and
 $3511$. For more information see \url{https://en.wikipedia.org/wiki/Wieferich_prime}.
 It is believed that ${\mathcal W}_{\ell}(x)=O(\log \log x)$, but it is not even known
 whether
 ${\mathcal W}_{\ell}(x)=o(x/\log x)$ or not.
 The same is expected for
 ${\mathcal W}^{\varepsilon}_{\ell}(x)$.

\section{Counting $H^-$- and $H^+$-irregular primes}
\label{sec:hplusmincounting}
In Section \ref{hahaha} we will count $H^-$- and $H^+$-irregular primes in
prescribed arithmetic progression, here as a warm-up we consider
the problem of counting \emph{all} such primes.
\par Denote by $\mc P_{H^{\varepsilon}}$ the set of $H^{\varepsilon}$-irregular primes. By Lemma \ref{lem-H},  \eqref{eq:upperbound} and \eqref{eq:sequencedivisor}, we obtain unconditionally 
\begin{equation}
\label{Hmin}  
\mc P_{H^-}(x)\geq(1-\delta(\ell,1)-\delta(\ell,2)-\epsilon)\frac{x}{\log x} 
\end{equation}
and
\begin{equation}
\label{Hplus} 
\mc P_{H^+}(x)\geq(\rho_{\ell,1}(1,1)-\delta(\ell,1)-\epsilon)\frac{x}{\log x},
\end{equation}
where $\epsilon>0$ is arbitrary. 
\begin{Con}
\label{con:doubledensity}
If we require the primes counted by  $\mc P(g,t)$ and 
${\mathcal Q}_{\alpha,\beta}$ to be also $B$-regular, then the estimates \eqref{eq:sharper} and \eqref{eq:sequencedivisor} hold with 
$\delta(g,t)$ and $\rho_{\alpha,\beta}(a,d)$ replaced by $\delta(g,t)/\sqrt{e}$, respectively $\rho_{\alpha,\beta}(a,d)/\sqrt{e}$.
\end{Con}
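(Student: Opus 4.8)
The statement is a conjecture rather than a theorem, so the ``proof'' I would aim for is a heuristic justification in the exact spirit of Siegel's argument underlying Conjecture \ref{sheuristics}; a genuine proof is out of reach, since it would entail, among other things, knowing that infinitely many $B$-regular primes exist, which is open. The starting observation is that both \eqref{eq:sharper} and \eqref{eq:sequencedivisor} count primes $p$ subject to a condition on the multiplicative order of a fixed integer modulo $p$ (namely $\ord_p(g)=(p-1)/t$ for \eqref{eq:sharper}, and $\ord_p(\alpha/\beta)$ even for \eqref{eq:sequencedivisor}), and that these order conditions cut out sets of primes of the densities $\delta(g,t)$ and $\rho_{\alpha,\beta}(a,d)$ occurring in \eqref{eq:sharper} and \eqref{eq:sequencedivisor}, realised via Chebotarev-type splitting in the radical extensions $\Q(\zeta_{nt},g^{1/nt})$, respectively in the fields governing $\ord_p(\alpha/\beta)$.

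First I would recall Siegel's heuristic as already used for Conjecture \ref{sheuristics}: treating the events $p\mid B_{2k}$ for $1\le k\le (p-3)/2$ as independent of probability $1/p$, a prime $p$ is $B$-regular with ``probability'' $(1-1/p)^{(p-3)/2}$, which tends to $e^{-1/2}=1/\sqrt{e}$ as $p\to\infty$. By the Kummer congruence \eqref{Kummercongruence} and Theorem \ref{thm:Kummer}, $B$-regularity of $p$ is equivalent to $p\nmid h_p$, hence is governed purely by the $p$-part of the class group of $\Q(\zeta_p)$.

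The key step is then an independence hypothesis between the two conditions. The order of $g$ (respectively $\alpha/\beta$) modulo $p$ depends only on how $p$ splits in the fixed tower of radical extensions $\Q(\zeta_{nt},g^{1/nt})$, which are ramified solely at the primes dividing $g$ and $t$, whereas $B$-regularity is an intrinsic feature of the cyclotomic class number $h_p$; there is no known mechanism relating the two. Assuming they are asymptotically independent, the density of primes meeting both requirements is the product of the individual densities, so imposing $B$-regularity multiplies $\delta(g,t)$ and $\rho_{\alpha,\beta}(a,d)$ each by $1/\sqrt{e}$, exactly as asserted. Concretely, I would insert the weight $(1-1/p)^{(p-3)/2}$ into the prime sums behind \eqref{eq:sharper} and \eqref{eq:sequencedivisor} and verify by partial summation that this weight contributes the constant factor $1/\sqrt{e}$ to the main term while leaving the error terms of the same shape.

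The hard part is precisely this independence, which I expect to be genuinely inaccessible with current methods: one cannot even prove the infinitude of $B$-regular primes, much less control their intersection with an Artin-type set, and nothing is known linking the $p$-rank of the class group of $\Q(\zeta_p)$ to the reduction of a fixed algebraic number modulo $p$. For this reason the assertion must remain conjectural, with its evidence being numerical.
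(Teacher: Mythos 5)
Your proposal matches the paper's treatment of this statement: it is stated as a conjecture with no proof given, motivated only by the phrase ``making heuristical assumptions similar to that of Siegel,'' which is precisely the reasoning you spell out — Siegel's heuristic assigning $B$-regularity a limiting ``probability'' of $1/\sqrt{e}$, combined with an assumed asymptotic independence between $B$-regularity and the Artin-type order conditions defining $\mc P(g,t)$ and $\mathcal Q_{\alpha,\beta}$. Your additional remarks on why this independence is inaccessible (no known link between the $p$-part of $h_p$ and the splitting of $p$ in the radical extensions $\Q(\zeta_{nt},g^{1/nt})$) correctly identify the obstruction and are consistent with the paper's decision to leave the statement conjectural, supported only by the numerical data in its tables.
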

This conjecture leads to the conjectures that
\begin{equation}
\label{Hmincon}  
\mc P_{H^-}(x)\sim \Big(1-\frac{1}{\sqrt{e}}(\delta(\ell,1)+\delta(\ell,2)\Big)\frac{x}{\log x} 
\end{equation}
and
\begin{equation}
\label{Hpluscon} 
\mc P_{H^+}(x)\sim\Big(1-\frac{1}{\sqrt{e}}(1-\rho_{\ell,1}(1,1)+\delta(\ell,1)\Big)\frac{x}{\log x}.
\end{equation}
For reasons of space we abstain from writing out 
\eqref{Hmin}, \eqref{Hplus}, \eqref{Hmincon}, and \eqref{Hpluscon} explicitly, but this can be done easily
using the following lemma.
\begin{Lem}\label{lem-dens}
Let $A$ be the Artin constant defined in \eqref{Artinconstantdef}. 
If $\ell=2$, then $\delta(2,1)=A$ and $\delta(2,2)=3A/4$. 
If $\ell\equiv1\bmod4$, then
\[ \delta(\ell,1)=A\left(1+\frac{1}{\ell^2-\ell-1}\right) \qquad \text{ and } \qquad \delta(\ell,2)=\frac{3A}{4} \left(1-\frac{1}{\ell^2-\ell-1}\right). \]
If $\ell\equiv3\bmod4$, then
\[ \delta(\ell,1)=A \qquad \text{ and } \qquad \delta(\ell,2)=\frac{3A}{4}\left(1+ \frac{1}{3(\ell^2-\ell-1)}\right). \]
We have
$\rho_{2,1}(1,1)=17/24$ and 
$\rho_{\ell,1}(1,1)=2/3$ for every odd prime $\ell$.
\end{Lem}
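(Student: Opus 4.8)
The plan is to read off each density from its defining Dirichlet series by inserting the degrees of the Kummer fields $\Q(\zeta_{nt},\ell^{1/nt})$ and then resumming against the Artin constant. Since $\mu(n)=0$ for non-squarefree $n$, only squarefree $n$ contribute, and for such $n$ the degree $[\Q(\zeta_{nt},\ell^{1/nt}):\Q]$ equals the generic value $nt\,\varphi(nt)$ except for a possible entanglement through a quadratic subfield. Indeed, the only abelian subfield of the real radical field $\Q(\ell^{1/nt})$ is $\Q(\sqrt{\ell})$, present exactly when $2\mid nt$, and $\Q(\sqrt\ell)\subseteq\Q(\zeta_{nt})$ iff $\mathrm{disc}(\Q(\sqrt\ell))\mid nt$; since an abelian field cannot contain the non-abelian $\Q(\ell^{1/d})$ for $d>2$, no deeper radical lies in $\Q(\zeta_{nt})$. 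Hence the degree is $nt\,\varphi(nt)$ when there is no entanglement and $nt\,\varphi(nt)/2$ when there is, where $\mathrm{disc}(\Q(\sqrt\ell))$ equals $\ell$, $4\ell$, $8$ according as $\ell\equiv1\bmod4$, $\ell\equiv3\bmod4$, $\ell=2$.

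First I would treat $\delta(\ell,1)$ and $\delta(\ell,2)$. Writing the degree as generic times entanglement factor, the series splits into a base term and a correction. The base term is $\sum_{n}\mu(n)/(n\varphi(n))=A$ for $t=1$, while splitting squarefree $n$ into its odd part and its $2$-part gives $\sum_n \mu(n)/(2n\varphi(2n))=\tfrac38\sum_{n\ \mathrm{odd}}\mu(n)/(n\varphi(n))=\tfrac38\cdot2A=\tfrac{3A}{4}$ for $t=2$. The correction is the same series restricted to those squarefree $n$ with $\mathrm{disc}\mid nt$; on that progression $n$ is forced to contain fixed prime factors, so the restricted sum factors as an explicit rational times the partial Euler product $\prod_{p\nmid 2\ell}(1-\tfrac1{p(p-1)})=2A\ell(\ell-1)/(\ell^2-\ell-1)$, producing a correction of the shape $A/(c\,(\ell^2-\ell-1))$ and reproducing the five stated formulas. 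Note that for $\ell=2$ (disc $=8$) and for $\ell\equiv3\bmod4$, $t=1$ (disc $=4\ell$) no squarefree $n$ is divisible by the non-squarefree discriminant, so the correction vanishes and $\delta(2,1)=\delta(\ell,1)=A$.

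For $\rho_{\ell,1}(1,1)$ I would use that $p\mid \ell^n+1$ for some $n$ exactly when $-1\in\langle\ell\rangle\subset\mathbb F_p^\times$, i.e.\ when $\ord_p(\ell)$ is even (Moree \cite[Prop.\,2]{Mordivisors}), so that $\rho_{\ell,1}(1,1)=1-\mathrm{dens}\{p:\ord_p(\ell)\ \mathrm{odd}\}$. Conditioning on $s=\nu_2(p-1)\ge1$, the order is odd iff $\ell$ is a $2^s$-th power mod $p$, i.e.\ $p$ splits completely in $\Q(\zeta_{2^s},\ell^{1/2^s})$ but not in $\Q(\zeta_{2^{s+1}})$; this event has density $1/D_s-1/D_s'$ with $D_s=[\Q(\zeta_{2^s},\ell^{1/2^s}):\Q]$ and $D_s'=[\Q(\zeta_{2^{s+1}},\ell^{1/2^s}):\Q]$. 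For odd $\ell$ there is no $2$-power entanglement, so $D_s=2^{2s-1}$ and $D_s'=2^{2s}$, and the geometric series sums to $\sum_{s\ge1}2^{-2s}=\tfrac13$, giving $\rho=\tfrac23$. For $\ell=2$ one recomputes the small-$s$ degrees using $\sqrt2\in\Q(\zeta_8)$; in particular $\Q(\zeta_8)\subset\Q(\zeta_4,2^{1/4})$ forces the $s=2$ term to vanish, and the sum becomes $\tfrac14+0+\sum_{s\ge3}2^{-(2s-1)}=\tfrac14+\tfrac1{24}=\tfrac7{24}$, whence $\rho=\tfrac{17}{24}$.

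The main obstacle is pinning down these field degrees correctly, and the genuinely delicate point is $\ell=2$: here $2$ ramifies and $\Q(\zeta_{2^s})$ already contains $\sqrt2$ for $s\ge3$, so the naive degree $2^{2s-1}$ is wrong for large $s$ and the inclusion $\Q(\zeta_8)\subset\Q(\zeta_4,2^{1/4})$ must be invoked; everything else is bookkeeping against the Artin-constant Euler product. Alternatively, all five $\delta$-values and both $\rho$-values are special cases of the Euler products of Hooley \cite{Hooley1}, Moree \cite{near} and Moree and Sury \cite{MS}, so one may instead simply substitute $g=\ell$, $t\in\{1,2\}$ and $(a,d)=(1,1)$ into those formulas.
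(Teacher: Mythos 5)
Your proposal is correct, but it takes a genuinely different route from the paper: the paper does not compute these constants at all, it proves the lemma by citation, quoting Moree \cite{near} for the four $\delta$-values and Hasse \cite{Hasse} (Dirichlet density) together with Odoni \cite{Odoni} (natural density) for the two $\rho$-values. You instead give a self-contained evaluation, and your numbers all check out: the degree of $\Q(\zeta_{nt},\ell^{1/nt})$ is $nt\,\varphi(nt)$ with a halving exactly when $\mathrm{disc}(\Q(\sqrt{\ell}))\mid nt$; the base sums are $A$ (for $t=1$) and $\tfrac{3}{8}\cdot 2A=\tfrac{3A}{4}$ (for $t=2$); the entanglement corrections are $+\tfrac{A}{\ell^2-\ell-1}$, $-\tfrac{3A}{4(\ell^2-\ell-1)}$, $+\tfrac{A}{4(\ell^2-\ell-1)}$ in the three nonvanishing cases, reproducing the stated formulas, and the squarefree obstruction correctly kills the corrections when the discriminant is $4\ell$ or $8$ and $t$ is small. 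Likewise your Hasse-style conditioning on $s=\nu_2(p-1)$ gives the odd-order densities $\sum_{s\ge1}2^{-2s}=\tfrac13$ for odd $\ell$ and $\tfrac14+0+\sum_{s\ge3}2^{-(2s-1)}=\tfrac{7}{24}$ for $\ell=2$, the vanishing $s=2$ term coming precisely from $\Q(\zeta_8)\subset\Q(\zeta_4,2^{1/4})$. What your approach buys is transparency: the reader sees exactly where each rational factor comes from, in the same spirit as the paper's own proofs of Theorems \ref{thm-ap} and \ref{thm_Hminus}. What the paper's citation buys is brevity and rigor on the one point you elide: $\rho_{\ell,1}(1,1)$ is defined in the paper as a \emph{natural} density (the constant in the Moree--Sury asymptotic \eqref{eq:sequencedivisor}), and your evaluation sums Chebotarev densities over the countably many levels $s$; exchanging a countable disjoint union with natural density is not automatic and requires a uniform tail bound (e.g., by Brun--Titchmarsh the primes $p\le x$ with $2^s\mid p-1$ number $O(x/(2^s\log(x/2^s)))$ uniformly in $s$), which is exactly the Hasse-versus-Odoni distinction the paper's double citation is calibrated to handle. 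For the $\delta$-part no such issue arises, since $\delta(g,t)$ is \emph{defined} as the infinite sum you evaluate; your closing remark that one could instead substitute into the formulas of Hooley \cite{Hooley1}, Moree \cite{near} and Moree--Sury \cite{MS} is, in effect, the paper's actual proof.
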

The quantities $\delta(\ell,1)$ and $\delta(\ell,2)$ are, for example, computed in Moree \cite{near}. The results for $\rho_{\ell,1}(1,1)$ were proved by Hasse \cite{Hasse} for Dirichlet density
and by Odoni \cite{Odoni} for natural density (which is what we use
here).
\section{Counting $\ell$-Genocchi irregular primes}
Recall that by Proposition \ref{prop:HvG}, counting $\ell$-Genocchi irregular primes is, except possibly for $p=\ell$, the same as counting
$H$-irregular primes. Corollary \ref{cor:simple} implies that
\[ \mc P_G(x)\geq \Big(\frac{1}{4}-\epsilon\Big)\frac{x}{\log x}. \]
By Proposition \ref{lem_Greg}, given $\epsilon>0$ arbitrary and fixed, we have for every $x$ sufficiently large
\[ \mc P_G(x)\geq \left( 1-\delta(\ell^2,2)-\epsilon \right)\frac{x}{\log x}, \]
Assuming Siegel's heuristic we conjecture that
\begin{equation}\label{eq:Gconj}
    \mc P_G(x)\sim \left( 1-\frac{\delta(\ell^2,2)}{\sqrt{e}} \right)\frac{x}{\log x}.
\end{equation}
By \cite[Theorem 1.10]{HKMS} for $\ell=2$, and 
the results of  \cite{near} for $\ell$ odd, 
we obtain
\begin{equation}
    \label{eq:double}
 \mc P_G(x)\geq \left( 1-\frac{3}{2}A-\epsilon \right)\frac{x}{\log x} \ \text{~and~} \
\mc P_G(x)\geq \left( 1-\frac{3A}{2}\left( 1+\frac{1}{3(\ell^2-\ell-1)}\right)-\epsilon \right)\frac{x}{\log x}, \end{equation}
respectively, where $\epsilon>0$ is arbitrary and fixed 
and $x$ sufficiently large.
For $\epsilon$ small enough, the constants involved are
in the interval $(0.4,0.44)$.
We conjecture that 
\[ \mc P_G(x)\sim \left(1-\frac{3A}{2\sqrt{e}} \right)\frac{x}{\log x}~(\ell=2) \
\text{~and~}  \ \mc P_G(x)\sim \left(1-\frac{3A}{2\sqrt{e}}\left( 1+\frac{1}{3(\ell^2-\ell-1)} \right) \right)\frac{x}{\log x}~(\ell>2), \]
with now the constants involved being in $(0.637,0.66)$.
See Table \ref{tabG1} for some numerical examples supporting these conjectures.

\section{Irregular primes in arithmetic progression}
The goal of this section is proving the main result of this paper, namely Theorem \ref{thm:GinAP}.
For $\ell=2$ this was already considered in \cite[Sect.\ 1.3.1]{HKMS}. Further, we study the extremal behavior of $\delta_{d,a}$.
\subsection{The $\ell$-Genocchi case}
Let $\ell$ be an odd prime number and $1\leq a < d$ be coprime integers.  In this section we consider the  set of rational (odd) primes
\begin{equation}
    \label{eq:pdadef}
\mc P_{d,a}:=\{p\equiv a\bmod d:\ord_p(\ell^2)=(p-1)/2  \}.  
\end{equation}
By Proposition \ref{lem_Greg} the primes 
in $\mc P_{d,a}$ are irregular.
Under GRH we have (see \cite[Theorem 3.1]{HKMS})
$$\lim_{x\rightarrow \infty}\frac{\mc P_{d,a}(x)}{\pi(x;d,a)}=\delta_{d,a},$$
with 
\begin{equation}
    \label{deltada}
\delta_{d,a} = \sum_{n=1}^\infty\frac{\varphi(d)\mu(n)c_a(n)}{[\Q(\zeta_{[d,2n]},\ell^{1/n}):\Q]},
\end{equation}
where $c_a(n)=1$ if the automorphism $\sigma_a$ of $\Q(\zeta_d)$ determined by $\sigma_a(\zeta_d)=\zeta_d^a$ is the identity on the field $\Q(\zeta_d)\cap\Q(\zeta_{2n},\ell^{1/n})$, and $c_a(n)=0$ otherwise.
\par We will now express $\delta_{d,a}$ as an Euler product in two
different ways: one more starting from first principles and another 
shorter one 
relying more heavily on
existing results.
\begin{Thm}\label{thm-ap}
Let $a$ and $d$ be coprime integers, $\mc P_{d,a}$ and $\delta_{d,a}$ as in 
    \eqref{eq:pdadef}, respectively \eqref{deltada}, 
and $\epsilon>0$ be arbitrary and fixed. Then for every $x$ sufficiently large we have
\begin{equation}
    \label{eq:upper}
\mc P_{d,a}(x)\leq (\delta_{d,a}+\epsilon)\frac{x}{\varphi(d)\log x}, 
\end{equation}
where
\[ \delta_{d,a}=A\,c(d,a)\,R(d,a), \]
with $R(d,a)$ as in \eqref{eq:rda}
and
\[ c(d,a) = \begin{cases} 
\frac{1}{2}\left( 3+\frac{1}{\ell^2-\ell-1} \right) & \text{if } \ell\nmid d, 4\nmid d;\\
 1+\frac{1}{\ell^2-\ell-1}  & \text{if } \ell\nmid d, 4\mid d, a\equiv 1\bmod 4;\\
 1 & \text{if } \ell\mid d, 4\nmid d, \left( \frac{a}{\ell} \right)=1;\\
2 & \text{if } 4\mid d \text{ and } a\equiv 3\bmod 4, \text{ or } \ell\mid d \text{ and } \left( \frac{a}{\ell} \right)=-1;\\
0 & \text{if } 4\ell\mid d , \left( \frac{a}{\ell} \right)=1 , a\equiv1\bmod4 .
\end{cases} \]
Assuming GRH we have
\[ \mc P_{d,a}(x)= \frac{\delta_{d,a}}{\varphi(d)}\frac{x}{\log x}+O_d\left( \frac{x\log\log x}{\log^2 x} \right). \]
\end{Thm}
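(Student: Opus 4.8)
The plan is to reduce the condition defining $\mc P_{d,a}$ to near-primitive-root conditions that have already been evaluated, and then to carry out the Euler-product bookkeeping. The starting point is the order identity \eqref{eq:trivial}: for $p\nmid\ell$ one has $\ord_p(\ell^2)=(p-1)/2$ exactly when either $\ord_p(\ell)=p-1$, or $\ord_p(\ell)=(p-1)/2$ with $(p-1)/2$ odd, i.e. $p\equiv 3\bmod 4$. Hence $\mc P_{d,a}$ splits as a disjoint union
\[
\mc P_{d,a} = \mc A_{d,a} \sqcup \mc N_{d,a}, \qquad
\mc N_{d,a} := \{ p\equiv a\bmod d,\ p\equiv 3\bmod 4 : \ord_p(\ell)=(p-1)/2 \}.
\]
Both pieces are of near-primitive-root type: $\mc A_{d,a}$ is the primitive-root set of Theorem \ref{thm-pietersap}, while $\mc N_{d,a}$ is the index-$2$ near-primitive-root set $\mc P(\ell,2)$ of Section \ref{sec:npr}, cut out further by the congruence $p\equiv 3\bmod 4$ (in effect a near-primitive-root condition in the progression $a\bmod d$ refined to modulus $[d,4]$).

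Given this decomposition, the upper bound \eqref{eq:upper} is immediate: since the union is disjoint one has $\mc P_{d,a}(x)=\mc A_{d,a}(x)+\mc N_{d,a}(x)$, and each summand obeys an unconditional upper bound of the shape \eqref{eq:upperap} by the Brun--Titchmarsh/Hooley sieve machinery recorded in \cite[Theorem 3.1]{HKMS} and in Theorem \ref{thm-pietersap}. Adding the two bounds gives the claim, once one knows the algebraic identity $\delta_{d,a}=\alpha_{d,a}+(\text{density of }\mc N_{d,a})$; this is an unconditional identity of Euler products, so no circularity with GRH arises. Likewise, the GRH asymptotic, and the very fact that the density equals the sum \eqref{deltada}, are instances of Lenstra's general theorem \cite{Lenstra} applied to the Kummer family $\Q(\zeta_{[d,2n]},\ell^{1/n})$; here I would simply quote \cite[Theorem 3.1]{HKMS} for the error term $O_d(x\log\log x/\log^2 x)$.

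The substance is the Euler-product evaluation $\delta_{d,a}=A\,c(d,a)\,R(d,a)$, which I would carry out in the two advertised ways. The short way feeds the decomposition into known formulas: the density of $\mc A_{d,a}$ is $A\,c_1(d,a)\,R(d,a)$ by Theorem \ref{thm-pietersap}, and the density of $\mc N_{d,a}$ is the index-$2$ near-primitive-root density in a progression computed by Moree \cite{near}; using \eqref{Rtwo} to absorb the $p\equiv 3\bmod 4$ condition into the progression factor, one again extracts the common factor $A\,R(d,a)$ and reads off $c(d,a)$ as the sum of $c_1(d,a)$ and the $\mc N$-contribution, case by case in $4\mid d$, $\ell\mid d$, and the value of $\kronecker{a}{\ell}$. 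The first-principles way works directly with \eqref{deltada}: writing $n=2^j m$ with $m$ odd, I would compute the Kummer degree $[\Q(\zeta_{[d,2n]},\ell^{1/n}):\Q]=\varphi([d,2n])\,n/\kappa_n$, where $\kappa_n\in\{1,2\}$ records the entanglement $\sqrt{\ell^*}\in\Q(\zeta_{[d,2n]})$ with $\ell^*=(-1)^{(\ell-1)/2}\ell$, determine $c_a(n)$ by identifying the intersection $\Q(\zeta_d)\cap\Q(\zeta_{2n},\ell^{1/n})$, and then sum the resulting multiplicative function over $n$ as an Euler product. The explicit five-case form of $c(d,a)$ emerges on collating these computations.

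The hard part will be exactly this entanglement bookkeeping: pinning down for which $n$ the Kummer extension degenerates (i.e. when $\sqrt{\ell^*}$ or $i=\sqrt{-1}$ already lies in $\Q(\zeta_{[d,2n]})$) and, simultaneously, when $\sigma_a$ fixes the relevant quadratic subfield of the intersection. It is precisely the interaction between the quadratic field $\Q(\sqrt{\ell^*})$, the field $\Q(i)$ forced by the $p\equiv 3\bmod 4$ condition on $\mc N_{d,a}$, and the prescribed residue $a\bmod d$ that produces the factors $\kronecker{a}{\ell}$ and the splitting of $c(d,a)$, including the vanishing case $4\ell\mid d$, $\kronecker{a}{\ell}=1$, $a\equiv 1\bmod 4$ (cf.\ Remark \ref{triviaaldichtheidnul}). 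Keeping the two congruence conditions and the two quadratic entanglements consistent across all cases, and checking that the two routes yield the same $c(d,a)$, is where the bulk of the care is needed.
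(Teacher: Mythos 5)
Your overall architecture does mirror the paper's: the splitting of $\mc P_{d,a}$ via \eqref{eq:trivial} into primitive-root primes and primes with $p\equiv 3\bmod 4$ and $\ord_p(\ell)=(p-1)/2$ is exactly the paper's identity \eqref{altset}, which underlies its alternative proof, and your ``first-principles way'' is an outline of the paper's main proof. The citations of \cite[Theorem 3.1]{HKMS} for the unconditional upper bound and the GRH asymptotic are also how the paper proceeds. However, there are two genuine gaps. The first is that your ``short way'' rests on a miscitation: Moree \cite{near} computes the near-primitive-root density $\delta(g,t)$ for $\mc P(g,t)$ with \emph{no} arithmetic progression condition. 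The index-$2$ near-primitive-root density in a progression $a\bmod d$ --- that is, the density $\alpha^-_{d,a}$ of $\mc A^-_{d,a}$, which is what your set $\mc N_{d,a}$ requires --- is not in the literature; it is precisely Theorem \ref{thm_Hminus} of this very paper, advertised there as new and proved by the same first-principles computation you are trying to bypass. So that route is unavailable as a shortcut (within the paper it would run backwards: Remark \ref{rem:comparison} records the consistency $c^-(d,a)=c(d,a)-c_1(d,a)$ only after both theorems have been proved independently). The paper's alternative proof evades this issue in a different way: after reducing to $4\mid d$, the case $a\equiv 3\bmod 4$ is settled by observing that in \eqref{deltada} only odd $n$ contribute, which collapses the whole sum --- both order conditions at once --- to $2S(1)=2AR(d,a)$ (formula \eqref{easycase}), with no need to evaluate the near-primitive-root piece on its own.

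The second, and more serious, gap is that the actual content of the theorem --- the five-case formula for $c(d,a)$ --- is never derived. You correctly identify what must be computed (the degree drop $\kappa_n\in\{1,2\}$ coming from $\sqrt{\ell^*}$-entanglement, the coefficients $c_a(n)$ via the intersections $\Q(\zeta_d)\cap\Q(\zeta_{2n},\ell^{1/n})$, quadratic reciprocity producing $\kronecker{a}{\ell}$, the vanishing case $4\ell\mid d$), but you defer all of it as ``where the bulk of the care is needed.'' In the paper this bookkeeping \emph{is} the proof: Lemma \ref{lem-coeff} pins down $c_a(n)$ (with the key dichotomy being whether $n$ is even and $\ell\mid d$, which is when $\sqrt{\ell^*}$ enters the intersection), Remark \ref{rem-details} handles the relations between $(d,2n)$, $(d,n)$, $[d,2n]$, $[d,n]$, and Lemma \ref{lem:Ssums} converts the resulting sums $S(1)$, $S(\ell)$, $S_2(\cdot)$ into Euler products, case by case in $4\mid d$, $\ell\mid d$ and $\kronecker{a}{\ell}$. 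Without executing at least one of your two routes far enough to produce the displayed $c(d,a)$, what you have is a correct plan rather than a proof.
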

\begin{Rem}
\label{rem:alternative}
Note that alternatively we can write
\[ c(d,a) = \begin{cases} 
\frac{1}{2}\left( 3+\frac{1}{\ell^2-\ell-1} \right) & \text{if }  4\nmid d, \ell\nmid d;\\
\frac{1}{2}( 3-\kronecker{a}{\ell}) & \text{if }  4\nmid d, \ell\mid d;\\
  1+\frac{1}{\ell^2-\ell-1}   & \text{if } 4\mid d, a\equiv 1\bmod 4, \ell\nmid d;\\
  1-\kronecker{a}{\ell}  & \text{if } 4\mid d, a\equiv 1\bmod 4, \ell\mid d;\\
2 & \text{if } 4\mid d \text{ and } a\equiv 3\bmod 4.\\
\end{cases} \]
\end{Rem} 
\begin{Rem}
{}From Theorem \ref{thm-ap} we infer that the analogue of identity
\eqref{Rtwo} holds for
$\delta_{d,a}$ as well, which is consistent with the fact that the analogue
of this identity also holds for $\mc P_{d,a}$.
\end{Rem}

\subsubsection{The proof of Theorem \ref{thm-ap}}
The proof requires a few preliminary lemmas. The first is merely a special
case of Lemma 3.1 of \cite{Mor2}.
\begin{Lem}
\label{lem:Ssums}
Put \[ \omega_d(n):=\frac{n\varphi([d,n])}{\varphi(d)}. \]
It is a multiplicative function in $n$.
For $m\geq1$, let
\[ 
S(m)=\sum_{\substack{n\geq1,\ m\mid n \\ a\equiv 1\bmod (d,n)}} \frac{\mu(n)}{\omega_d(n)}, \qquad
S_2(m)=\sum_{\substack{n\geq1,\ [2,m]\mid n \\ a\equiv 1\bmod (d,n)}} \frac{\mu(n)}{\omega_d(n)}.
\]
We have $S_2(m)=-S(m)$. Further, $S(1)=AR(d,a)$
and 
$$S(\ell)=
\begin{cases}
-\frac{A}{\ell^2-\ell-1}R(d,a) & \text{ if } \ell\nmid d;\\ 
-\frac{A}{\ell-1}R(d,a) & \text{ if } \ell\mid d,
\end{cases}
$$
where $\ell$ is an odd prime.
\end{Lem}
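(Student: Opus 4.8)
The plan is to collapse all three evaluations into one absolutely convergent Euler product. I would start from the standard identity $\varphi(d)\varphi(n)=\varphi((d,n))\varphi([d,n])$, which rewrites the weight as
\[
\omega_d(n)=\frac{n\varphi([d,n])}{\varphi(d)}=\frac{n\varphi(n)}{\varphi((d,n))}.
\]
Here $n\mapsto n\varphi(n)$ is multiplicative and $n\mapsto(d,n)$ is multiplicative, hence so is $n\mapsto\varphi((d,n))$; the quotient $\omega_d$ is therefore multiplicative, which settles the first assertion and underpins everything that follows.

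I would then prove $S_2(m)=-S(m)$ for the odd moduli that are actually needed, $m=1$ and $m=\ell$, where $[2,m]=2m$. Splitting $S(m)$ by the parity of $n$ gives $S(m)=T+S_2(m)$ with $T$ the sum over odd $n$. Writing each even $n$ as $n=2n'$ with $n'$ odd, multiplicativity yields $\mu(n)=-\mu(n')$ and $\omega_d(n)=\omega_d(2)\omega_d(n')$; the key numerical input is $\omega_d(2)=2\varphi(2)/\varphi((d,2))=2$, valid whether or not $2\mid d$. The congruence condition also survives the reduction: since $(d,2)\mid(a-1)$ always (if $2\mid d$ then $a$ is odd), $a\equiv1\bmod(d,2n')$ is equivalent to $a\equiv1\bmod(d,n')$. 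Hence $S_2(m)=-\tfrac12 T$, so $T=-2S_2(m)$ and $S(m)=T+S_2(m)=-S_2(m)$.

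For the closed forms I would observe that $g(n):=\frac{\mu(n)}{\omega_d(n)}\mathbf 1[(d,n)\mid(a-1)]$ is multiplicative on squarefree $n$, because $a\equiv1\bmod(d,n)$ factors into the local requirements that every prime dividing $(d,n)$ also divide $a-1$. Absolute convergence (the local factors are $1+O(1/p^2)$) gives $S(1)=\prod_p(1+g(p))$, with local factor $1-1/(p(p-1))$ when $p\nmid d$, factor $1-1/p$ when $p\mid(a-1,d)$, and factor $1$ when $p\mid d$ but $p\nmid a-1$. Using $A=\prod_p(1-1/(p(p-1)))$ together with $(1-1/(p(p-1)))^{-1}=1+1/(p^2-p-1)$ to pull the $p\mid d$ terms out of $A$, the product collapses to $A\,R(d,a)$. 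For $S(\ell)$ I force $\ell\mid n$, which replaces the factor $1+g(\ell)$ by $g(\ell)$, so $S(\ell)=S(1)\,g(\ell)/(1+g(\ell))$; substituting $g(\ell)=-1/(\ell(\ell-1))$ for $\ell\nmid d$ and $g(\ell)=-1/\ell$ for $\ell\mid d$ gives the two stated values.

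The computation is essentially bookkeeping, and the only point needing care is the three-way split of the local factors, together with the observation that the $\ell\mid d$ formula presumes $\ell\mid(a-1)$: otherwise $g(\ell)=0$ and $S(\ell)$ vanishes. I would therefore set up the Euler product once, invoke absolute convergence a single time, and read off $S(1)$ and both cases of $S(\ell)$ from the local factors, leaving the routine factor manipulations to the reader.
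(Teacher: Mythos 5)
Your argument is correct, and it takes a genuinely different route from the paper for the simple reason that the paper gives no computation at all: Lemma \ref{lem:Ssums} is dispatched there as ``merely a special case of Lemma 3.1 of \cite{Mor2}''. Your self-contained derivation --- rewriting $\omega_d(n)=n\varphi(n)/\varphi((d,n))$ via $\varphi(d)\varphi(n)=\varphi((d,n))\varphi([d,n])$, checking that $g(n)=\mu(n)\,\omega_d(n)^{-1}\,\mathbf{1}[(d,n)\mid a-1]$ is multiplicative on squarefree $n$, and reading off $S(1)$, $S(\ell)$ and the parity identity from the resulting absolutely convergent Euler product --- is what one would get by unwinding that reference, and it buys two things the bare citation does not. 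First, it makes visible that $S_2(m)=-S(m)$ can only be meant for odd $m$: for even $m$ one has $[2,m]=m$, hence $S_2(m)=S(m)$, while $S(m)$ need not vanish (e.g.\ $S(2)=-A$ when $d=a=1$); your restriction to $m\in\{1,\ell\}$ is exactly the right reading. Second, and more importantly, your observation that the $\ell\mid d$ formula for $S(\ell)$ presumes $\ell\mid a-1$ is a genuine correction to the statement: if $\ell\mid d$ and $\ell\nmid a-1$, then every $n$ occurring in $S(\ell)$ satisfies $\ell\mid (d,n)$, so the condition $a\equiv 1\bmod (d,n)$ is never met and $S(\ell)=0$, whereas the stated value $-AR(d,a)/(\ell-1)$ is nonzero (concretely, $d=\ell=3$, $a=2$ gives $S(3)=0$ against $-\tfrac{3}{5}A$). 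This does not damage the paper, since in the proofs of Theorems \ref{thm-ap} and \ref{thm_Hminus} the sums $S(\ell)$, $S_2(\ell)$ are only ever invoked with $\ell\nmid d$, and in the regime $\ell\mid d$, $a\not\equiv 1\bmod \ell$ the relevant vanishing is argued directly (Case 2.1 of the proof of Theorem \ref{thm-ap}); but it is precisely the kind of boundary case a reader of the lemma should be warned about, and your proof is the one that surfaces it.
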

We recall the law  of quadratic reciprocity for the Jacobi
symbol: if $j$ and $k$ are odd coprime positive integers, then
\begin{equation}
\label{Jacobi}    
\kronecker{j}{k}\kronecker{k}{j}=(-1)^{(j-1)(k-1)/4}.
\end{equation}

In the following we set  $\ell^*=(-1)^{(\ell-1)/2}\ell$.

\begin{Lem}
\label{lem:actiononsquare}
Let $\sigma_a$ be the automorphism of $\Q(\zeta_d)$ uniquely determined by $\sigma_a(\zeta_d)=\zeta_d^a$ with $a$ a positive integer coprime to $d$. 
If $4\ell\mid d$, then
$\sigma_a(\sqrt{\ell})=\kronecker{\ell}{a}\sqrt{\ell}$.
If $\ell\mid d$, then
$\sigma_a(\sqrt{\ell^*})=\kronecker{a}{\ell}\sqrt{\ell^*}$.
\end{Lem}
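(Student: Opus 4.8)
The plan is to realize $\sqrt{\ell^*}$ as a quadratic Gauss sum lying inside $\Q(\zeta_\ell)\subseteq\Q(\zeta_d)$ and to compute the action of $\sigma_a$ on it directly, then to descend to $\sqrt{\ell}$ by dividing out the factor $i=\zeta_4$ when necessary. First I would recall the quadratic Gauss sum $g=\sum_{t=1}^{\ell-1}\kronecker{t}{\ell}\zeta_\ell^t$, which satisfies $g^2=\ell^*$, so that $\sqrt{\ell^*}=g$. Since $\ell\mid d$ we have $\zeta_\ell=\zeta_d^{d/\ell}$, and hence $\sigma_a(\zeta_\ell)=\zeta_\ell^a$. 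Reindexing the sum by $s\equiv at\bmod\ell$ and using $\kronecker{a^{-1}}{\ell}=\kronecker{a}{\ell}$ (both being $\pm1$ with product $\kronecker{1}{\ell}=1$) yields $\sigma_a(g)=\kronecker{a}{\ell}g$, which is precisely the second assertion $\sigma_a(\sqrt{\ell^*})=\kronecker{a}{\ell}\sqrt{\ell^*}$.

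For the first assertion I would split according to $\ell\bmod4$, noting that $4\ell\mid d$ forces $a$ to be odd and coprime to $\ell$, so all Jacobi symbols below are defined and the reciprocity law \eqref{Jacobi} applies. If $\ell\equiv1\bmod4$ then $\ell^*=\ell$, so $\sqrt{\ell}=\sqrt{\ell^*}$ and $\sigma_a(\sqrt\ell)=\kronecker{a}{\ell}\sqrt\ell$; since $(\ell-1)/2$ is then even, \eqref{Jacobi} gives $\kronecker{a}{\ell}=\kronecker{\ell}{a}$, as required. If $\ell\equiv3\bmod4$ then $\ell^*=-\ell$, so $\sqrt{\ell^*}=i\sqrt\ell$ with $i=\zeta_4\in\Q(\zeta_d)$; here $\sigma_a(i)=i^a=\kronecker{-1}{a}i$, whence $\sigma_a(\sqrt\ell)=\sigma_a(\sqrt{\ell^*})/\sigma_a(i)=\kronecker{a}{\ell}\kronecker{-1}{a}\sqrt\ell$. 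In this case $(\ell-1)/2$ is odd, so \eqref{Jacobi} reads $\kronecker{a}{\ell}\kronecker{\ell}{a}=(-1)^{(a-1)/2}=\kronecker{-1}{a}$, and multiplying through by $\kronecker{a}{\ell}$ turns $\kronecker{a}{\ell}\kronecker{-1}{a}$ into $\kronecker{\ell}{a}$, giving $\sigma_a(\sqrt\ell)=\kronecker{\ell}{a}\sqrt\ell$.

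The computations are entirely elementary, and the only point requiring genuine care is the bookkeeping with quadratic reciprocity: matching the direction of the Jacobi symbol (namely $\kronecker{a}{\ell}$, which emerges naturally from the Gauss sum, against $\kronecker{\ell}{a}$, which appears in the statement) and correctly absorbing the extra factor $\kronecker{-1}{a}$ produced by the action of $\sigma_a$ on $i$ in the case $\ell\equiv3\bmod4$. The choice of square root of $\ell$ is immaterial, since replacing $\sqrt\ell$ by $-\sqrt\ell$ flips both sides of the identity simultaneously.
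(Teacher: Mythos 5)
Your proof is correct and follows essentially the same route as the paper's own (much terser) argument: express $\sqrt{\ell^*}$ as a quadratic Gauss sum in $\Q(\zeta_\ell)\subseteq\Q(\zeta_d)$ to get $\sigma_a(\sqrt{\ell^*})=\kronecker{a}{\ell}\sqrt{\ell^*}$, pass to $\sqrt{\ell}$ via $\sigma_a(i)=i^a$, and repackage with quadratic reciprocity \eqref{Jacobi}. You have simply written out in full the details the paper leaves implicit, and the bookkeeping (including the case split on $\ell\bmod 4$ and the sign-independence remark) is accurate.
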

\begin{proof}
The quadratic Gauss sum expresses $\sqrt{\ell^*}$ as an element in $\Q(\zeta_d)$, which allows
one to determine $\sigma_a(\sqrt{\ell^*})$ and from this, using $\sigma_a(i)=i^a$, also $\sigma_a(\sqrt{\ell})$.
Invoking \eqref{Jacobi} we can formulate the outcome in a more compact way.
\end{proof}
\begin{Rem}
This can also be proved using that $\sigma_a(\sqrt{\ell})/\sqrt{\ell}$ is a character, 
see \cite[Lemma 2.1]{Mor2}.
\end{Rem}
\begin{Lem}\label{lem-coeff}
Let $n$ be a squarefree integer. If $n$ is odd, or $\ell\mid n$, or $n$ is even and $\ell\nmid nd$, then we have: $c_a(n)=1$  if and only if $a\equiv1\bmod(d,2n)$. If $n$ is even, $\ell\nmid n$ and $\ell\mid d$, then we have: $c_a(n)=1$  if and only if $a\equiv1\bmod(d,2n)$ and $\left(\frac{a}{\ell}\right)=1$.
\end{Lem}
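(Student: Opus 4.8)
The plan is to convert the definition of $c_a(n)$ into an explicit determination of a single field intersection. By the definition following \eqref{deltada}, $c_a(n)=1$ precisely when $\sigma_a$ restricts to the identity on $F:=\Q(\zeta_d)\cap K$, where $K:=\Q(\zeta_{2n},\ell^{1/n})$; so everything reduces to identifying $F$. Since $F$ is a subfield of the abelian extension $\Q(\zeta_d)/\Q$, it is itself abelian over $\Q$, hence contained in the maximal abelian subextension $L$ of $K$. As $\Q(\zeta_{2n})/\Q$ is already abelian we have $\Q(\zeta_{2n})\subseteq L\subseteq K$, and because $\Q(\zeta_d)\cap K$ is abelian and lies in $\Q(\zeta_d)$ one checks directly that $F=\Q(\zeta_d)\cap L$. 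Thus the first task is to pin down $L$, the second to intersect it with $\Q(\zeta_d)$.

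First I would determine $L$. The extension $K/\Q(\zeta_{2n})$ is a cyclic Kummer extension generated by the single radical $\ell^{1/n}$ (as $\mu_n\subseteq\Q(\zeta_{2n})$), so its intermediate fields are exactly the $\Q(\zeta_{2n},\ell^{1/e})$ with $e$ a divisor of $[K:\Q(\zeta_{2n})]$. Such a field is abelian over $\Q$ iff $\ell^{1/e}\in\Q^{\mathrm{ab}}$, which for the prime $\ell$ forces $e\le 2$. Now $\sqrt{\ell}=(\ell^{1/n})^{n/2}$ lies in $K$ as soon as $n$ is even, whereas for odd $n$ the cyclic extension $K/\Q(\zeta_{2n})$ has odd degree and hence no quadratic subextension. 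Consequently $L=\Q(\zeta_{2n})$ when $n$ is odd, and $L=\Q(\zeta_{2n},\sqrt{\ell})$ when $n$ is even (this possibly collapsing to $\Q(\zeta_{2n})$ when $\ell\mid n$). In the even case $4\mid 2n$ gives $i\in\Q(\zeta_{2n})$, so $\sqrt{\ell}$ and $\sqrt{\ell^*}$ generate the same extension and we may write $L=\Q(\zeta_{2n},\sqrt{\ell^*})$; this is the convenient shape because $\sqrt{\ell^*}\in\Q(\zeta_\ell)$ is cut out by the quadratic Dirichlet character $\chi_\ell=\left(\frac{\cdot}{\ell}\right)$ of conductor $\ell$.

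Next I would compute $\Q(\zeta_d)\cap L$ by passing to Dirichlet characters inside a large cyclotomic field $\Q(\zeta_N)$ with $N=\lcm(d,2n,\ell)$, where each subfield corresponds to its group of associated characters and field intersection corresponds to character-group intersection. The field $\Q(\zeta_d)$ carries the characters of conductor dividing $d$, while $L$ (for even $n$) carries the characters $\psi\chi_\ell^{\,j}$ with $\psi$ of conductor dividing $2n$ and $j\in\{0,1\}$. Since $\ell$ is odd, $\ell\mid 2n$ iff $\ell\mid n$; so when $\ell\nmid n$ the conductor of a twist $\psi\chi_\ell$ is the product of the two coprime conductors, and such a twist factors through $(\Z/d)^\times$ iff $\ell\mid d$. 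This yields exactly the trichotomy claimed: if $n$ is odd, or $\ell\mid n$ (so $\sqrt{\ell^*}\in\Q(\zeta_{2n})$ already), or $n$ is even with $\ell\nmid nd$, then no extra quadratic character survives and $F=\Q(\zeta_{(d,2n)})$; whereas if $n$ is even, $\ell\nmid n$ and $\ell\mid d$, the character $\chi_\ell$ survives together with the twists $\psi\chi_\ell$ of conductor dividing $d$, giving $F=\Q(\zeta_{(d,2n)},\sqrt{\ell^*})$.

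Finally I would read off the two criteria. In every case $\sigma_a$ fixes $\Q(\zeta_{(d,2n)})$ iff $a\equiv 1\bmod (d,2n)$, which settles the first branch. In the exceptional branch $\sigma_a$ must in addition fix $\sqrt{\ell^*}$, and since $\ell\mid d$ here, Lemma \ref{lem:actiononsquare} gives $\sigma_a(\sqrt{\ell^*})=\left(\frac{a}{\ell}\right)\sqrt{\ell^*}$, so this holds iff $\left(\frac{a}{\ell}\right)=1$, exactly as stated. I expect the main obstacle to be the determination of $L$, that is, showing that no radical entanglement beyond the single quadratic piece $\sqrt{\ell}$ can produce an abelian-over-$\Q$ subfield of $K$; once $L$ is known, the remaining character bookkeeping and the appeal to Lemma \ref{lem:actiononsquare} are routine.
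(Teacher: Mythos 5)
Your proposal is correct and takes essentially the same route as the paper's proof: both reduce to the maximal abelian (over $\Q$) subfield of $\Q(\zeta_{2n},\ell^{1/n})$ via Kummer theory together with a Schinzel-type fact (your claim that $\ell^{1/e}\in\Q^{\mathrm{ab}}$ forces $e\le 2$), identify that subfield as $\Q(\zeta_{2n})$ or $\Q(\zeta_{2n},\sqrt{\ell^*})$, and then read off $c_a(n)$ using Lemma \ref{lem:actiononsquare}. The only cosmetic difference is in the routine intersection step: you compute $\Q(\zeta_d)\cap\Q(\zeta_{2n},\sqrt{\ell^*})$ by Dirichlet-character and conductor bookkeeping, whereas the paper sandwiches the intersection between $\Q(\zeta_{(d,2n)})$ and $\Q(\zeta_{(d,2n)},\zeta_\ell)$ and argues directly.
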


\begin{proof}
Let us set $I:=\Q(\zeta_d)\cap\Q(\zeta_{2n},\ell^{1/n})$. By Kummer theory we may argue that, since $\Q(\zeta_\infty)\cap\Q(\zeta_{2n},\ell^{1/n})$ is a finite abelian extension of $\Q(\zeta_{2n})$, it is of the form $\Q(\zeta_{2n},\ell^{e/n})$ for some $e\geq0$, and hence by Schinzel's Theorem \cite[Theorem 2]{schinzel} it is either $\Q(\zeta_{2n})$ if $n$ is odd, or $\Q(\zeta_{2n},\sqrt{\ell})$ if $n$ is even. 
The latter extension equals $\Q(\zeta_{2n})$ if $\ell\mid n$ (as $\Q(\sqrt{\ell})\subseteq \Q(\zeta_{4\ell})$ and we already have that $n$ is even).

Thus, if $n$ is odd or $\ell\mid n$, we deduce that $I=\Q(\zeta_{(d,2n)})$.
For $n$ even with $\ell\nmid n$, it suffices to notice that $\Q(\sqrt{\ell^*})$ is contained in $\Q(\zeta_d)$ if and only if $\ell\mid d$. If $\ell\nmid d$, then we have $I=\Q(\zeta_{(d,2n)})$. If $\ell\mid d$, then noticing that $\Q(\zeta_{(d,2n)}) \subseteq I \subseteq \Q(\zeta_{(d,2n)},\zeta_\ell)$, we deduce that $I=\Q(\zeta_{(d,2n)},\sqrt{\ell^*})$  (where $\Q(\sqrt{\ell^*})$ is not contained in $\Q(\zeta_{(d,2n)})$).

If $I=\Q(\zeta_{(d,2n)})$, then the automorphism $\sigma_a$ fixes $I$ if and only if $a\equiv1\bmod(d,2n)$. Suppose now that $I=\Q(\zeta_{(d,2n)},\sqrt{\ell^*})$ and $\Q(\sqrt{\ell^*})\not\subseteq\Q(\zeta_{(d,2n)})$. 
By Lemma \ref{lem:actiononsquare} the automorphism $\sigma_a$ fixes  $\Q(\sqrt{\ell^*})\subseteq\Q(\zeta_\ell)$ if and only if $a$ is a square modulo $\ell$, i.e.\  $\left(\frac{a}{\ell}\right)=1$. 
\end{proof}

\begin{Rem}\label{rem-details}
Let $n$ be a squarefree number. We collect here some technical details on the numbers $(d,2n)$ and $[d,2n]$, and on the condition $a\equiv 1\bmod (d,2n)$.  
\begin{itemize}
\item If $d$ is odd, or $4\nmid d$ and $n$ is even, then $(d,2n)=(d,n)$ and $[d,2n]=2[d,n]$. 
\item If $d$ is even and  $n$ is odd, or $4\mid d$, then $(d,2n)=2(d,n)$ and $[d,2n]=[d,n]$. 
\end{itemize}
If $d$ is even and $n$ is odd, then we have  $a\equiv1\bmod 2(d,n)$ if and only if $a\equiv1\bmod (d,n)$, because $a$ must be odd. If $4\mid d$ and $n$ is even, then $a\equiv1\bmod 2(d,n)$ holds only if $a\equiv 1\bmod4$, and in this case $a\equiv1\bmod 2(d,n)$ is equivalent to $a\equiv1\bmod (d,n)$.
\end{Rem}

\begin{proof}[Proof of Theorem \ref{thm-ap}]
Recall that the degree $[\Q(\zeta_{[d,2n]},\ell^{1/n}):\Q]$ equals $\varphi([d,2n])n/2$ if $n$ is even and $\ell\mid[d,n]$, and it equals $\varphi([d,2n])n$ otherwise.
For the computation of the density $\delta_{d,a}$ we distinguish the cases $\ell\nmid d$ and $\ell\mid d$.

\emph{Case 1: $\ell\nmid d$.}
Using Lemma \ref{lem-coeff} we see that  the expression \eqref{deltada} simplifies to  
\[  \delta_{d,a}=\sum_{\substack{n\ge 1\\ a\equiv 1\bmod (d,2n)}}\frac{\varphi(d)\mu(n)}{[\Q(\zeta_{[d,2n]},\ell^{1/n}):\Q]}. \]
In view of the degree formulas, we obtain
\begin{equation}\label{eq1}
   \delta_{d,a}  =
\Bigg(\sum_{\substack{n\geq1\\ a\equiv 1\bmod (d,2n)}}+
		\sum_{\substack{2\ell\mid n \\ a\equiv 1\bmod (d,2n)}}
\Bigg) \frac{\mu(n)}{\omega_d(n)}, 
\end{equation}
with $\omega_d(n)$ as in Lemma \ref{lem:Ssums}.

\emph{Case 1.1: $4\nmid d$.} By Remark \ref{rem-details}, from \eqref{eq1} we have
\begin{align*}
\delta_{d,a} & =  \Bigg(
		\sum_{2\nmid n  \atop a\equiv 1\bmod (d,n)}  +
		\frac{1}{2}\sum_{2\mid n  \atop a\equiv 1\bmod (d,n)} +
		\frac{1}{2}\sum_{2\ell\mid n  \atop a\equiv 1\bmod (d,n)} \Bigg)\frac{\mu(n)}{\omega_d(n)} \\
		& =  \Bigg(
		\sum_{n\geq1  \atop a\equiv 1\bmod (d,n)}  -
		\frac{1}{2}\sum_{2\mid n  \atop a\equiv 1\bmod (d,n)} +
		\frac{1}{2}\sum_{2\ell\mid n  \atop a\equiv 1\bmod (d,n)} \Bigg)\frac{\mu(n)}{\omega_d(n)}.
\end{align*}
Then using Lemma \ref{lem:Ssums} we obtain
$$\delta_{d,a}   = S(1)-\frac{1}{2}S_2(1)+\frac{1}{2}S_2(\ell)=\frac{1}{2}\left(3S(1)-S(\ell) \right) 
= \frac{A}{2} \left(3 +\frac{1}{\ell^2-\ell-1}\right) R(d,a).$$


\emph{Case 1.2: $4\mid d$.} In view of Remark \ref{rem-details}, if $a\equiv1\bmod4$, then \eqref{eq1} becomes
$$
\delta_{d,a} =   \Bigg(
		\sum_{n\geq1  \atop a\equiv 1\bmod (d,n)}  +
		\sum_{2\ell\mid n  \atop a\equiv 1\bmod (d,n)} \Bigg)\frac{\mu(n)}{\omega_d(n)} 
		= S(1)+S_2(\ell) 	= A\left( 1+\frac{1}{\ell^2-\ell-1}\right)R(d,a).
$$
If $a\equiv3\bmod4$, then from \eqref{eq1} 
we are left with
\begin{equation}
\label{easycase}
    \delta_{d,a}= \sum_{2\nmid n  \atop a\equiv 1\bmod (d,2n)}\frac{\mu(n)}{\omega_d(n)}
=  2S(1) = 2AR(d,a).
\end{equation}

\emph{Case 2: $\ell\mid d$.} 
We distinguish the two cases: $a$ is a square modulo $\ell$ or not.

\emph{Case 2.1: $\left(\frac{a}{\ell}\right)=-1$.} 
Notice that the condition $a\equiv 1\bmod (d,2n)$ implies in particular that $\ell\nmid n$, otherwise we would have $a\equiv1\bmod\ell$ and hence a contradiction with the assumption. Thus, by Lemma \ref{lem-coeff} and Remark \ref{rem-details} we have
$$
\delta_{d,a}  = \sum_{2\nmid n  \atop a\equiv 1\bmod (d,n)}\frac{\mu(n)}{\omega_d(n)} \\
= S(1)-S_2(1) = 2A R(d,a).
$$

\emph{Case 2.2: $\left(\frac{a}{\ell}\right)=1$.} 
By Lemma \ref{lem-coeff} we obtain
\[ \delta_{d,a}  =\sum_{n\geq1 \atop a\equiv 1\bmod (d,2n)}
\frac{\varphi(d)\mu(n)}{[\Q(\zeta_{[d,2n]},\ell^{1/n}):\Q]}
=\Bigg(\sum_{2\nmid n \atop a\equiv 1\bmod (d,2n)}
+2\sum_{ 2\mid n \atop a\equiv 1\bmod (d,2n)}\Bigg)\frac{\mu(n)}{\omega_d(n)}. \]
In the following we take  Remark \ref{rem-details} into account.
If $4\nmid d$, then 
\[ \delta_{d,a} = S(1) = A R(d,a). \]
If $4\mid d$ and $a\equiv 1\bmod 4$, then
\[ \delta_{d,a} = S(1)+S_2(1)=0. \]
If $4\mid d$ and $a\equiv 3\bmod 4$, then
\[ \delta_{d,a} = S(1)-S_2(1)=2 A R(d,a).  \qedhere\]
\end{proof}

\subsubsection{Alternative proof of Theorem \ref{thm-ap}}\label{alternative}
\begin{proof}[Proof of Theorem \ref{thm-ap}]
We 
start by noting, 
cf.\,\eqref{eq:trivial}, that
\begin{equation}
    \label{altset}
\mc P_{d,a}=\{p:p\equiv a\bmod d, \, \ord_p(\ell)=p-1\text{~or~}p\equiv 3\bmod 4\text{~and~}\ord_p(\ell)=(p-1)/2\}.
\end{equation}
Without loss of generalization we may assume that $4$ divides $d$: if $4\nmid d$ we split the progression into two, according to whether $a\equiv 1\bmod 4$   or  $a\equiv 3\bmod 4$, and add
the results.
Thus, if $a\equiv 1\bmod 4$, then we just
have
\[ \mc P_{d,a}=\{p:p\equiv a\bmod d, \, \ord_p(\ell)=p-1  \}.\]
Using Theorem \ref{thm-pietersap} and the law of quadratic 
reciprocity we conclude that
\begin{equation}
\label{specialcase}
\delta_{d,a}=A R(d,a)c(d,a)
\text{~with~}
 c(d,a) = \begin{cases} 
1-\left( \frac{a}{\ell} \right) & \text{if } a\equiv 1\bmod 4,\, \ell\mid d;\\
 1+\frac{1}{\ell^2-\ell-1}  & \text{if } a\equiv 1\bmod 4,\,\ell\nmid d.\\
\end{cases} 
\end{equation}
If $a\equiv 3\bmod 4$, then  by \eqref{easycase} we
arrive at
\[ \delta_{d,a}=AR(d,a)c(d,a), \text{~with~}c(d,a)=2.\]
Now let us suppose that $4\nmid d$. 
We put $d_1=\lcm(4,d)$.
We let $a_1$ and $a_3$ be integers such that
$a_j\equiv a\bmod d$ and
$a_j\equiv j\bmod 4$. Noting that
$R(d_1,a_j)=R(d,a)$ and $\varphi(d_1)=2\varphi(d)$, we conclude
that
$$\delta_{d,a}=\frac{\delta_{d_1,a_1}+\delta_{d_1,a_3}}{2}=\frac{A}{2}(c(d_1,a_1)+c(d_1,a_3))R(d,a).$$
We find that $c(d_1,a_3)=2$ and noticing that if $\ell\mid d$, then we have
$\kronecker{a_1}{\ell}=\kronecker{a}{\ell}$, 
we obtain from \eqref{specialcase} that 
$$c(d_1,a_1)=\begin{cases} 
1-\left( \frac{a}{\ell} \right) & \text{if }\ell\mid d;\\
 1+\frac{1}{\ell^2-\ell-1}  & \text{if }\ell\nmid d.
\end{cases}
$$
The proof (with the reformulation of $c(d,a)$ as 
given in Remark \ref{rem:alternative}) is now easily completed.
\end{proof}
\begin{Example}
Take $d=4$ and $\ell=3$. 
Moree and Zumalac\'arregui \cite{MZ} 
crucially made use of the sets $\mc P_{4,1}$ and $\mc P_{4,3}$ in 
their solution of a conjecture of Salajan.
By a simple direct computation 
they showed that
$\delta_{4,1}=6A/5$ and $\delta_{4,3}=2A$ \cite[Appendix A]{MZ}, in
agreement with our results. These sets play also an
important role in the resolution of Browkin's generalization of the Salajan conjecture 
by Ciolan and Moree \cite{CM}.
\end{Example}
\subsubsection{The extremal behavior of $\delta_{d,a}$}\label{sec:extremal}
Theorem \ref{thm:GinAP} gives a lower bound for $\mc P_G(d,a)(x)$. In this section we 
will study how small and large this lower bound can be. For $\ell=2$ this was done in \cite[Sect.\,2.2]{HKMS}. This amounts to bounding $\delta_{d,a}$, 
which a priori satisfies $0\le \delta_{d,a}\le 1$, as it is a relative density.
\par \textbf{Small $\delta_{d,a}$}. We put
$$
G(d)=A\prod_{p \mid d}\left(1+\frac{1}{p^2-p-1}\right).$$
Note that
$$
G(d)=\prod_{p \nmid d}\left(1-\frac{1}{p(p-1)}\right)<1.
$$
Recall that
\begin{equation*}
AR(d,a)=G(d)\prod_{p \mid (a-1,d)}
\left(1-\frac{1}{p}\right).
\end{equation*}
If $d$ is even, then $(a-1,d)$ is even and we infer that
$AR(d,a)<1/2$. If $d$ is odd, then $G(d)<1/2$ and 
again $AR(d,a)< 1/2$. As $c(d,a)\le 2$, we infer that
\[    
\delta_{d,a}=AR(d,a)c(d,a)< 1.
\]
Clearly 
$$AR(d,a)\ge AR(d,1)=\frac{\varphi(d)}{d}G(d).$$
The ratio $\varphi(d)/d$ takes on local minima on
products 
of consecutive primes. For these products we see that
$G(d)$ tends to 1, on noting that 
\begin{equation}
\label{eq:tail}
\prod_{p\ge q}\Big(1-\frac{1}{q(q-1)}\Big)=1+O\Big(\frac{1}{q}\Big).
\end{equation}
Using this and Mertens' theorem 
(see \cite[Theorem 13.13]{Apostol}), with $\gamma$ Euler's constant,
\begin{equation}
    \label{mertens}
    \prod_{p\le x}\Big(1-\frac{1}{p}\Big)\sim \frac{e^{-\gamma}}{\log x},
\end{equation}
we can then infer that
\begin{equation}
\label{eq:liminf}    
\liminf_{d \to \infty} AR(d,1)\log\log d = e^{-\gamma}. 
\end{equation}
The argument is similar to that of the proof of the classical 
result (see, for instance, \cite[Theorem 13.14]{Apostol}) $$
\liminf_{d \to \infty} \frac{\varphi(d)}{d}  \log\log d = e^{-\gamma}. 
$$

\begin{Prop}  \label{inf}
Let $\ell$ be a prime.
We have 
$$
\liminf_{d\to \infty} \min_{\substack{1\le a < d \\ (a,d)=1\\ \delta_{d,a}>0}} \delta_{d,a}\log\log d  = e^{-\gamma}.
$$
\end{Prop}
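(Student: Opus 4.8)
The plan is to prove the two inequalities $\liminf_{d\to\infty}\min_a \delta_{d,a}\log\log d\ge e^{-\gamma}$ and $\le e^{-\gamma}$ separately, where in both cases the minimum is over the admissible classes (those $1\le a<d$ with $(a,d)=1$ and $\delta_{d,a}>0$), and in both cases I reduce everything to the limit \eqref{eq:liminf}, namely $\liminf_{d\to\infty}AR(d,1)\log\log d=e^{-\gamma}$. The two structural facts I would extract from Theorem \ref{thm-ap} are the factorisation $\delta_{d,a}=A\,c(d,a)\,R(d,a)$ together with two monotonicity statements: that $c(d,a)$ is either $0$ or at least $1$, and that $R(d,a)\ge R(d,1)$ for every $a$ coprime to $d$. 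Since $A>0$ and $R(d,a)>0$ always, admissibility ($\delta_{d,a}>0$) is equivalent to $c(d,a)>0$, hence to $c(d,a)\ge 1$.

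For the lower bound I would first record the inequality $c(d,a)\ge 1$ whenever $c(d,a)\neq 0$ by inspecting the five branches of Theorem \ref{thm-ap}: one has $\tfrac12\bigl(3+\tfrac{1}{\ell^2-\ell-1}\bigr)\ge \tfrac32$ and $1+\tfrac{1}{\ell^2-\ell-1}>1$, while the remaining nonzero values are exactly $1$ and $2$. Next, since $(a-1,d)$ divides $d$ and therefore has a subset of the prime factors of $d$, the first product in \eqref{eq:rda} obeys $\prod_{p\mid(a-1,d)}(1-1/p)\ge\prod_{p\mid d}(1-1/p)=\varphi(d)/d$, which is precisely its value at $a=1$; the second product in \eqref{eq:rda} is independent of $a$, so $R(d,a)\ge R(d,1)$. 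Combining these, every admissible $a$ satisfies $\delta_{d,a}=A\,c(d,a)\,R(d,a)\ge AR(d,1)$, whence $\min_a\delta_{d,a}\ge AR(d,1)$. Multiplying by $\log\log d>0$ and passing to the liminf gives $\liminf_d\min_a\delta_{d,a}\log\log d\ge\liminf_d AR(d,1)\log\log d=e^{-\gamma}$ by \eqref{eq:liminf}.

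For the matching upper bound I would exhibit one sequence along which the minimum is no larger than $AR(d,1)$. I take $d$ to run through the primorials $\prod_{p\le y}p$ with $y\ge\ell$, so that $\ell\mid d$ and, as these $d$ are squarefree, $4\nmid d$. The class $a=1$ then lands in the branch $\ell\mid d,\ 4\nmid d,\ \kronecker{a}{\ell}=1$ of Theorem \ref{thm-ap} (note $1$ is a square modulo $\ell$), giving $c(d,1)=1$, while $(1-1,d)=d$ forces $R(d,1)$ to equal $\tfrac{\varphi(d)}{d}G(d)/A$, its minimal value. Hence $\delta_{d,1}=AR(d,1)>0$, $a=1$ is admissible, and $\min_a\delta_{d,a}\le AR(d,1)$. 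Along the primorials $\varphi(d)/d\sim e^{-\gamma}/\log y$ by Mertens' theorem \eqref{mertens}, $G(d)\to 1$ by \eqref{eq:tail}, and $\log\log d\sim\log y$, so $AR(d,1)\log\log d\to e^{-\gamma}$; therefore $\liminf_d\min_a\delta_{d,a}\log\log d\le e^{-\gamma}$. Together with the previous paragraph this gives the asserted equality.

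The routine parts are the case-check that $c(d,a)\in\{0\}\cup[1,\infty)$ and the standard asymptotics of $\varphi(d)/d$, $G(d)$ and $\log\log d$ along primorials. The one point demanding care is ensuring that the minimising class $a=1$ genuinely sits in the branch where $c(d,a)=1$ rather than in the vanishing branch (which requires $4\ell\mid d$); this is exactly why I restrict to primorials containing $\ell$ and exploit that their squarefreeness yields $4\nmid d$. All the analytic content is already encapsulated in \eqref{eq:liminf}, so once Theorem \ref{thm-ap} is in hand the proposition follows quite directly.
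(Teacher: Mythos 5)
Your proof is correct and follows essentially the same route as the paper: the lower bound via the observations that $c(d,a)\ge 1$ whenever it is nonzero and that $R(d,a)\ge R(d,1)$, combined with \eqref{eq:liminf}, and the upper bound by evaluating $\delta_{d,1}$ along primorials containing $\ell$ so that $a=1$ falls in the branch with $c(d,1)=1$ (the paper uses $d_n=\prod_{3\le p\le n}p$, omitting the prime $2$, but including it as you do changes nothing). The only caveat is that, like Theorem \ref{thm-ap} on which it rests, your argument covers odd $\ell$ only; for $\ell=2$ the paper delegates the statement to Proposition 2.3 of \cite{HKMS}.
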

\begin{proof}
For $\ell=2$ this result is Proposition 2.3 of 
\cite{HKMS}.
Our proof for odd $\ell$ is in the same spirit.
\par If $\delta_{d,a}>0$, then $c(d,a)\ge 1$.
Hence $\delta_{d,a}\log \log d\ge AR(d,a)\log \log d\ge AR(d,1)\log \log d$.
It now follows from \eqref{eq:liminf} that
the limes inferior 
is $\ge e^{-\gamma}$. 
In order to show that this bound is actually sharp we 
will show that
$\lim_{n\rightarrow \infty}\delta_{d_n,1}=e^{-\gamma}$, where 
$d_n=\prod_{3\le p\le n}p$.
\par Let $n\ge \ell$ be 
arbitrary. 
We have $c(d_n,1)=1$ and
$$
\delta_{d_n,1}=\Big(1+O\Big(\frac{1}{n}\Big)\Big)\prod_{2\le p\le n}\Big(1-\frac{1}{p}\Big),$$
where we used that
$$\prod_{p\nmid d_n}\Big(1-\frac{1}{p(p-1)}\Big)=\frac{1}{2}\prod_{p>n}\Big(1-\frac{1}{p(p-1)}\Big)=\frac{1}{2}+O\Big(\frac{1}{n}\Big),$$ with the last equality following from \eqref{eq:tail}.
Using Mertens' theorem \eqref{mertens}
and the prime number theorem in
the form $\log d_n\sim n$, we deduce that, as $n$ tends to infinity, 
$$
\delta_{d_n,1}\sim \frac{e^{-\gamma}}{\log n}\sim 
\frac{e^{-\gamma}}{\log \log d_n},$$
completing the proof.
 \end{proof}
\par \textbf{Large $\delta_{d,a}$}. 
Proposition \ref{casesbounds} gives information on how large $\delta_{d,a}$ can be.
In its proof we make
use of the elementary concepts of $a$- and $d$-sequence, which we now 
introduce.
Let $q_1,q_2,\ldots$ be a, possibly finite, sequence of pairwise coprime
 integers and $\alpha_1,\alpha_2,\ldots$ any integer sequence of equal length.
Put $d_k=\prod_{j=1}^k q_j$.
By the Chinese remainder theorem the system of congruences
\begin{equation}
 \label{eq:congsystem}
 x\equiv \alpha_1\bmod{q_1},\,x\equiv \alpha_2\bmod{q_2},\ldots,\,x\equiv \alpha_k\bmod{q_k},
 \end{equation}
 is equivalent with 
 $$x\equiv a_k\bmod{d_k},\quad 0\le a_k<d_k,$$
 with $a_k$ unique (which is a consequence of the
 Chinese remainder theorem). Thus to the system \eqref{eq:congsystem} we can associate 
 the $(a,d)$-sequence $(a_1,d_1),(a_2,d_2),\ldots$. 
For example, the system of congruences
 $x\equiv 3\bmod4$ and $x\equiv 2\bmod{p_i}$, with $p_i$ running
 through the consecutive odd primes, leads to  
the $a$-sequence $\{3,11,47,107,3467,45047,\ldots\}$.
\begin{Prop}
\label{casesbounds}
Let $\ell$ be an odd prime.
We have \[ \delta_{d,a} < \begin{cases} 
\frac{1}{4}\Big(3-\frac{2}{\ell(\ell-1)}\Big) & \text{if } \ell\nmid d, 4\nmid d;\\
\frac{1}{2}  & \text{if } \ell\nmid d, 4\mid d, a\equiv 1\bmod 4;\\
\frac{1}{3} & \text{if } 3\mid d, \ell=3, 4\nmid d, a\equiv 1\bmod3;\\
\frac{1}{2} & \text{if } \ell\mid d, \ell>3, 4\nmid d, \left( \frac{a}{\ell} \right)=1;\\
1 & \text{if } 4\mid d \text{ and } a\equiv 3\bmod 4;\\
1 & \text{if } \ell\mid d \text{ and } \left( \frac{a}{\ell} \right)=-1,
\end{cases} \]
and $\delta_{d,a}=0$ in the remaining cases.
All of the upper bounds are sharp in the sense that they do not
always hold if an arbitrary $\epsilon>0$ is subtracted from them.
\end{Prop}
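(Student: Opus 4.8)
The plan is to combine the Euler-product formula $\delta_{d,a}=A\,c(d,a)\,R(d,a)$ from Theorem \ref{thm-ap} (using the compact form of $c(d,a)$ in Remark \ref{rem:alternative}) with sharp estimates for the quantity $AR(d,a)=G(d)\prod_{p\mid(a-1,d)}(1-1/p)$ already introduced in the discussion of small $\delta_{d,a}$. The case split of the Proposition mirrors exactly the case split for $c(d,a)$, so in each case $c(d,a)$ is a fixed rational function of $\ell$ and the problem reduces to bounding $AR(d,a)$. Throughout I write $L=\ell(\ell-1)$, so that $\ell^2-\ell-1=L-1$.

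For the upper bounds the key observation is that $AR(d,a)$ always carries a factor $\le 1/2$: if $d$ is odd this is the $p=2$ term $(1-\frac{1}{2\cdot1})$ in $G(d)=\prod_{p\nmid d}(1-\frac1{p(p-1)})$, while if $d$ is even it is the factor $(1-\frac12)$ coming from $2\mid(a-1,d)$ in $R(d,a)$. This immediately gives $\delta_{d,a}=2AR(d,a)<1$ in the two cases with $c(d,a)=2$ (Cases 5 and 6). For the cases $\ell\nmid d$ (Cases 1 and 2) I would further use that the factor $(1-\frac1{\ell(\ell-1)})$ then appears in $G(d)$, so that $AR(d,a)<\frac12(1-\frac1{L})$; multiplying by $c(d,a)=\frac12(3+\frac1{L-1})$ and by $c(d,a)=1+\frac1{L-1}$ respectively and simplifying yields precisely $\frac14(3-\frac2L)$ and $\frac12$. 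For Case 4 ($\ell\mid d$, $\ell>3$, $\kronecker{a}{\ell}=1$) one has $c(d,a)=1$ and the bound is just $AR(d,a)<1/2$; for Case 3 ($\ell=3\mid d$, $a\equiv1\bmod3$) the congruence forces $3\mid(a-1,d)$, contributing an additional factor $2/3$ which together with the factor $1/2$ and $c(d,a)=1$ gives $\delta_{d,a}<1/3$.

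For sharpness I would, in each case, exhibit a sequence $(d_k,a_k)$ along which $\delta_{d_k,a_k}$ tends to the stated bound. The recipe is to let $d_k$ run through products of primes (including or excluding $4$, $\ell$, $3$ as dictated by the case) so that $G(d_k)\to 1$ or $\to 1-\frac1{L}$ by the tail estimate \eqref{eq:tail}, while choosing $a_k$ by the Chinese remainder theorem so that $(a_k-1,d_k)$ is as small as the case permits (equal to $2$, or to $6$ in Case 3) and so that $\kronecker{a_k}{\ell}$ takes the required value. For Case 4 this uses $\ell>3$ to select a quadratic residue $a_k\not\equiv1\bmod\ell$, which is exactly why $\ell=3$ is excluded there and instead produces the smaller bound $1/3$ in Case 3. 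The remaining pairs $(d,a)$, namely $4\ell\mid d$ with $a\equiv1\bmod4$ and $\kronecker{a}{\ell}=1$, fall under the vanishing case $c(d,a)=0$ of Theorem \ref{thm-ap}, whence $\delta_{d,a}=0$.

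I expect the main obstacle to be the sharpness constructions rather than the upper bounds: one must simultaneously control the factor $G(d_k)$ (via Mertens' theorem \eqref{mertens} and \eqref{eq:tail}, as in the proof of Proposition \ref{inf}), the gcd $(a_k-1,d_k)$, and the Jacobi symbol $\kronecker{a_k}{\ell}$, and check that the constraints defining each case ($4\mid d$ versus $4\nmid d$, $\ell\mid d$ versus $\ell\nmid d$, and the residue of $a$) are compatible with driving $AR(d_k,a_k)$ to its supremum. The upper-bound inequalities, by contrast, are immediate once the single factor $\le 1/2$ and, when $\ell\nmid d$, the factor $(1-\frac1{\ell(\ell-1)})$ have been isolated.
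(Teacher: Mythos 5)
Your proposal is correct and follows essentially the same route as the paper: both start from $\delta_{d,a}=A\,c(d,a)\,R(d,a)$, obtain the upper bounds by isolating the factors forced into $\prod_{p\mid(a-1,d)}(1-1/p)$ and $\prod_{p\nmid d}(1-1/(p(p-1)))$ in each case (the paper organizes this in its Table 1), and prove sharpness by Chinese-remainder constructions of $(a_k,d_k)$-sequences driving all non-forced factors to $1$ via the tail estimate \eqref{eq:tail}. Your observation that $\ell>3$ is needed in the fourth case to pick a quadratic residue $a\not\equiv 1\bmod\ell$ is exactly the paper's construction (d), which replaces $x\equiv 2\bmod\ell$ by $x\equiv 4\bmod\ell$.
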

\begin{proof}
Starting point is the formula
$$
\delta_{d,a}=c(d,a)\prod_{p \mid (a-1,d)}
\left(1-\frac{1}{p}\right)\prod_{p \nmid d}\left(1-\frac{1}{p(p-1)}\right)=c(d,a)\,\Pi_1\,\Pi_2,$$
say.
The six subcases we denote by respectively a,b,c,d,e and f. 
For each of them the conditions imposed on $a$ and $d$ ensure
that $(a-1,d)$ has certain prime factors, e.g., if $4\mid d$, then
$2\mid (a-1,d)$. These factors are indicated in the $\Pi_1$ column
of Table 1. Likewise certain factors have to appear in the $\Pi_2$ column.
An entry $e$ in the $\Pi_1$ column leads to a factor 
$1-\frac{1}{e}$ in $\delta_{d,a}$, in the $\Pi_2$ column $e$ leads to
$1-\frac{1}{e(e-1)}$. 
Further, in absence of an entry, we put a $1$ as factor.
Clearly multiplying everything and also multiplying
by $c(d,a)$ (which has a fixed value in each subcase), leads to an upper bound
for $\delta_{d,a}$. For example, in subcase a we obtain
$$\frac{1}{2}\Big(3+
\frac{1}{\ell^2-\ell-1}\Big)\cdot 1\cdot \frac{1}{2}\Big(1-\frac{1}{\ell(\ell-1)}\Big)=\frac{1}{4}\Big(3-\frac{2}{\ell(\ell-1)}\Big),$$
with the factor before the dot being $c(d,a)$, $1$ being the contribution
to $\Pi_1$ and the rest being the contribution to $\Pi_2$.
This explains the upper bound for $\delta_{d,a}$ in subcase a, and the other
upper bounds are read off similarly from Table 1.
\par  It remains to establish the sharpness of the upper bounds. We do
this by indicating six families $(a_j,d_j)$ such that 
$\delta_{d_j,a_j}$ tends to the indicated upper bound. 
The $a_j$ are solutions of a certain system of congruences, the
default system being
$$x\equiv 2\bmod{3},\,x\equiv 2\bmod{5},\,x\equiv 2\bmod{7},\ldots,\,x\equiv 2\bmod{11},\ldots$$
where the moduli run over the consecutive odd primes.
In each of the six cases we make some small
modifications to the default system involves at most 
the moduli $3$ and $\ell$, and we possibly add a congruence
modulo $4$.\\ 
\textbf{Construction of the $(a,d)$-sequences}\\
a) We remove the congruence $x\equiv 2\bmod \ell$.
Trivially now the $a$-sequence is $2,2,2,2,\ldots$.\\
b) We start with the congruence $x\equiv 1\bmod 4$ and
remove the congruence $x\equiv 2\bmod \ell$. Thus for
$\ell=5$ we find, for example, the $a$-sequence
$1,5,65,233,3005,51053,\ldots$.\\
c) We start with $x\equiv 1\bmod 3$ and obtain the 
$a$-sequence
$1,7,37,772,10012,85087\ldots$.\\
d) Here we need $\ell>3$. The congruence 
$x\equiv 2\bmod \ell$ is changed to 
$x\equiv 4\bmod \ell$. Thus for
$\ell=5$ we find, for example, the $a$-sequence
$2,14,44,464,12014,102104,\ldots$.\\
e) We start with the congruence $x\equiv 3\bmod 4$, 
leading to an $a$-sequence $3,11,47,107,3467,45047,\ldots$\\
f) We change $x\equiv 2\bmod \ell$ to 
$x\equiv n_0\bmod \ell$, with $n_0$ the smallest non-residue modulo $\ell$. For $\ell=7$ we obtain $a$-sequence
$2,2,17,332,10727,145862,\ldots$. (Note that we get the default congruence system 
if and only if $\ell\equiv \pm 3\bmod8$.)\\

\par Let $k\ge 2$. In each of the above subcases the
constructed $(a,d)$-sequence has the property
that $(a_k,d_k)=1$ and in addition $p\mid (a_k-1,d_k)$
if and only if $p$ is in the $\Pi_1$ column. If $k$ is large
enough, the primes that appear in the $\Pi_2$ column 
are precisely those indicated in that column, with 
in addition all prime $p\ge p_0$ for some $p_0$ tending
to infinity with $k$. We conclude that, as $k$ gets larger,
$$\delta_{a_k,d_k}=(\text{upper bound})\prod_{p\ge p_0}\left(1-\frac{1}{p(p-1)}\right)\rightarrow \text{upper bound},$$
concluding the proof.
\end{proof}

\centerline{\bf Table 1}
\begin{center}
\begin{tabular}{|c|c|c|c|}
\hline
subcase & $c(d,a)$ & $\Pi_1$ & $\Pi_2$\\
\hline
a & $\frac{1}{2}( 3+\frac{1}{\ell^2-\ell-1})$ &  & $2,\ell$\\
\hline
b & $1+\frac{1}{\ell^2-\ell-1}$ &  $2$ & $\ell$\\
\hline
c & $1$ & $3$ & $2$\\
\hline
d & $1$ &  & $2$\\
\hline
e & $2$ & $2$ & \\
\hline
f & $2$ & $2$ if $2\mid d$ & $2$ if $2\nmid d$\\
\hline
\end{tabular}
\end{center}
\vskip .4cm
\begin{Rem}
Our choice of the six sequences $(a_k,d_k)$ was very canonical, but in fact given
$d_k$ many choices of $a_k$ are allowed. E.g., in subcase c there are
$\varphi(d_k)\prod_{3\le p\le p_k}(p-2)/(p-1)$ choices allowed, with $p_k$ the
$k$th odd prime. This number is asymptotically equal to
$c_1\varphi(d_k)/\log \log d_k$, for some positive constant $c_1$. The same conclusion,
with possibly different $c_1$, is valid for the other five sequences.
\end{Rem}
It remains to deal with the case $\ell=2$. Proceeding as above one
deduces from Proposition 1.12 of \cite{HKMS} the following result.
\begin{Prop}
Let $\ell=2$. We have
$$
\delta_{d,a}=
\begin{cases}
\frac{3}{4}  & \text{if~}4\nmid d;\\
\frac{1}{2}  & \text{if~}4\mid d,8\nmid d,~a\equiv 1~({\rm mod~}4);\\
1  & \text{if~}4\mid d,8\nmid d,~a\equiv 3~({\rm mod~}4);\\
1  & \text{if~}8\mid d,~a\not\equiv 1~({\rm mod~}8);
\end{cases}
$$
and $\delta_{d,a}=0$ in the remaining cases.
All of the upper bounds are sharp in the sense that they do not
always hold if an arbitrary $\epsilon>0$ is subtracted from them.
\end{Prop}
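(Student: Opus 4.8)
The plan is to transcribe the proof of Proposition \ref{casesbounds} to the case $\ell=2$, the only substantial input being the Euler product for the relative density $\delta_{d,a}$ in this case. By Proposition 1.12 of \cite{HKMS} (the $\ell=2$ counterpart of Theorem \ref{thm-ap}) one has
\[
\delta_{d,a}=c(d,a)\,\Pi_1\,\Pi_2,\qquad
\Pi_1=\prod_{p\mid(a-1,d)}\Bigl(1-\tfrac1p\Bigr),\qquad
\Pi_2=\prod_{p\nmid d}\Bigl(1-\tfrac1{p(p-1)}\Bigr),
\]
where $c(d,a)$ is a nonnegative rational that is constant on each of the regimes appearing in the statement. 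The one genuinely new feature compared with the odd case is that the quadratic entanglement now sits at the prime $2$: since $\sqrt2\in\mathbb Q(\zeta_8)$, the discriminant of $\mathbb Q(\sqrt2)$ equals $8$, so the role played by $4\ell$ for odd $\ell$ is here played by $8$, and the symbol $\kronecker{a}{\ell}$ is replaced by the residue of $a$ modulo $8$ that governs $\kronecker{2}{p}$. This is exactly why the regimes are now cut out by the conditions $4\mid d$, $8\mid d$ and $a\bmod8$ instead of by $4\mid d$, $\ell\mid d$ and $\kronecker{a}{\ell}$.

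With this formula in hand, I would read off the supremum of $\delta_{d,a}$ in each regime exactly as is done via Table 1 in the proof of Proposition \ref{casesbounds}. The conditions imposed on $(d,a)$ force certain small primes to divide $(a-1,d)$, producing fixed factors in $\Pi_1$, and to divide $d$, removing them from $\Pi_2$, while $c(d,a)$ stays constant; multiplying the maximal value of $c(d,a)$ by the forced factors and bounding $\Pi_2<1$ yields the stated number. Thus when $4\nmid d$ the maximal $c(d,a)$ together with a forced factor $\tfrac12$ at the prime $2$ gives $3/4$; when $8\mid d$ and $a\not\equiv1\bmod8$ the value $c(d,a)=2$ combines with the factor $1-\tfrac12$ coming from $2\mid(a-1,d)$ to give $1$; and when $8\mid d$, $a\equiv1\bmod8$ the finite factor $c(d,a)$ vanishes, so $\delta_{d,a}=0$.

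To establish that the bounds are sharp I would reuse the device of $(a,d)$-sequences from Proposition \ref{casesbounds}: starting from the default system $x\equiv2\bmod q$ over the consecutive odd primes $q$ and modifying only the congruences modulo $4$ and modulo $8$, one obtains in each non-degenerate regime a family $(a_k,d_k)$ with $d_k$ the product of all primes up to a growing bound. Then the tail $\Pi_2=\prod_{p\ge p_0}\bigl(1-\tfrac1{p(p-1)}\bigr)\to1$ by \eqref{eq:tail}, while $\Pi_1$ and $c(d_k,a_k)$ are already at their extremal values, so $\delta_{d_k,a_k}$ tends to the displayed bound.

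The main obstacle is the first step, namely pinning down $c(d,a)$ for $\ell=2$ from \cite{HKMS}. Because the base $\ell^2=4$ is a perfect square and $\mathbb Q(\sqrt2)$ ramifies at $2$, the degrees $[\mathbb Q(\zeta_{[d,2n]},2^{1/n}):\mathbb Q]$ entering \eqref{deltada} carry extra $2$-entanglement, and the clean splitting into the cases of Theorem \ref{thm-ap} has to be redone modulo $8$; this is precisely the content taken over from \cite{HKMS}. Once $c(d,a)$ is available, the extremal analysis is a direct transcription of the odd-$\ell$ argument and introduces no new ideas.
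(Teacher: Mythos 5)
Your proposal is correct and is essentially the paper's own proof: the paper disposes of the $\ell=2$ case in one line, saying that "proceeding as above" (i.e., running the Table-1 extremal analysis and the $(a,d)$-sequence sharpness construction from the proof of Proposition \ref{casesbounds}) the result follows from the Euler product for $\delta_{d,a}$ in Proposition 1.12 of \cite{HKMS}, which is exactly your plan. Your identification of the only substantive change — the quadratic entanglement now sits at $2$, since the discriminant of $\Q(\sqrt{2})$ is $8$, so conditions modulo $8$ replace the conditions $\ell\mid d$, $\kronecker{a}{\ell}=\pm1$ — and the resulting case values (e.g.\ $c(d,a)=\tfrac32$ for $4\nmid d$, giving $\tfrac32\cdot\tfrac12=\tfrac34$) are correct.
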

\subsubsection{The proof of Theorem \ref{thm:GinAP}}\label{sec:thm3}
\begin{proof}
Let $\epsilon>0$ be arbitrary.
Recalling the definition \eqref{eq:pdadef} of $\mc P_{d,a}$, we find that, for all
$x$ sufficiently large,
$$\mc P_G(d,a)(x)\ge \pi(x;d,a)-\mc P_{d,a}(x)+O(1)\ge (1-\delta_{d,a}-\epsilon)\frac{x}{\varphi(d)\log x},$$
where we used the upper bound \eqref{eq:upper} for $\mc P_{d,a}(x)$, and the well-known asymptotic
$\pi(x;d,a)\sim x/(\varphi(d)\log x)$. The claims regarding 
$1-\delta_{d,a}$ are an immediate consequence of Proposition \ref{casesbounds}.
\end{proof}
\begin{Example}
Setting $a=d=1$ we have $\mc P_G(1,1)(x)=\mc P_G(x)$ in our earlier notation.
Theorem \ref{thm:GinAP} then yields the second inequality in \eqref{eq:double}.
\end{Example}
\section{Prime divisors of the $H^{\varepsilon}$-sequences}
\label{hahaha}
\subsection{The $H^-$-sequence} 
In this section we obtain the results on the growth
behavior of $\mc P_{H^{\varepsilon}}(d,a)(x)$ that are needed
in order to prove Theorem \ref{thm:HinAP}.
\subsubsection{Statement of results}
Let $\ell$ be an odd prime number and $1\leq a < d$ be coprime integers.  In this section we consider the  set of rational (odd) primes
\[
\mc P_{d,a}^-:=\{p\equiv a\bmod d:\ord_p(\ell)=(p-1)/2 \text{~or~} \ord_p(\ell)=p-1  \}.  \]
As Theorem \ref{thm-pietersap} 
provides all the information we need on the set
$\{p\equiv a\bmod d:\ord_p(\ell)=p-1 \}$, 
it suffices to consider the set
\[
\mc A_{d,a}^-:=\{p\equiv a\bmod d:\ord_p(\ell)=(p-1)/2 \},  
\]
which, under GRH, has density 
\begin{equation}
    \label{alphada-}
\alpha^{-}_{d,a} = \sum_{n=1}^\infty\frac{\varphi(d)\mu(n)c_a^-(n)}{[\Q(\zeta_{[d,2n]},\ell^{1/2n}):\Q]},
\end{equation}
where $c_a^-(n)=1$ if the automorphism $\sigma_a$ of $\Q(\zeta_d)$ determined by $\sigma_a(\zeta_d)=\zeta_d^a$ is the identity on the field $\Q(\zeta_d)\cap\Q(\zeta_{2n},\ell^{1/2n})$, and $c_a^-(n)=0$ otherwise.

\begin{Thm}\label{thm_Hminus}
Let $\ell$ be an odd prime, $a$ and $d$ coprime positive integers, $\delta_{d,a}^-$ as in \eqref{alphada-} 
and $\epsilon$ be arbitrary and fixed. Then for every $x$ sufficiently large we have
\[
\mc A_{d,a}^-(x)\leq (\alpha^{-}_{d,a}+\epsilon)\frac{x}{\varphi(d)\log x}, 
\]
where
\[ \alpha^{-}_{d,a}=A\,c^-(d,a)\,R(d,a), \]
with $R(d,a)$ as in \eqref{eq:rda}. If 
$4\mid d$, then
\begin{equation}
    \label{4doesnotdivide}
c^-(d,a) = \begin{cases}
\frac{1}{2}\left( 1-\frac{1}{\ell^2-\ell-1} \right) & \text{if } \ell\nmid d, a\equiv1\bmod4; \\
1-\left(\frac{-1}{\ell}\right)\frac{1}{\ell^2-\ell-1} & \text{if } \ell\nmid d, a\equiv3\bmod4; \\
0 & \textit{if } \ell\mid d, \left(\frac{\ell}{a}\right)=-1; \\
\frac{1}{2}\left( 3- \left(\frac{-1}{a}\right)\right) & \textit{if } \ell\mid d, \left(\frac{\ell}{a}\right)=1.
\end{cases} 
\end{equation}
If $4\nmid d$, then
\[ c^-(d,a) = \begin{cases}
\frac{3}{4}\left( 1-\frac{1}{\ell^2-\ell-1} \right) & \text{if } \ell\nmid d, \ell\equiv1\bmod4; \\
\frac{1}{4}\left(3+\frac{1}{\ell^2-\ell-1}\right) & \text{if } \ell\nmid d, \ell\equiv3\bmod4; \\
\frac{3}{4}\left( 1+\left(\frac{a}{\ell}\right) \right)
& \textit{if } \ell\mid d,\ell\equiv1\bmod4; \\
\frac{1}{8}\left( 5+ \left(\frac{a}{\ell}\right)\right) & \textit{if } \ell\mid d, \ell\equiv3\bmod4.
\end{cases} \]
Assuming GRH, we have
\[ \mc A_{d,a}^-(x)= \frac{\alpha^{-}_{d,a}}{\varphi(d)}
\frac{x}{\log x}+O_d\left( \frac{x\log\log x}{\log^2 x} \right). \]
\end{Thm}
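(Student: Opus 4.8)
The plan is to follow the template established in the proof of Theorem~\ref{thm-ap}, the only essential difference being that the radical $\ell^{1/n}$ is replaced throughout by $\ell^{1/2n}$. First I would settle the degree $[\Q(\zeta_{[d,2n]},\ell^{1/2n}):\Q]$ for squarefree $n$ (the only $n$ that survive the factor $\mu(n)$). Writing this degree as $\varphi([d,2n])\cdot e_n$, the Kummer-theoretic point is that the binomial $x^{2n}-\ell$ is irreducible over $\Q(\zeta_{[d,2n]})$ unless $\sqrt{\ell}\in\Q(\zeta_{[d,2n]})$, in which case its degree drops by exactly a factor of $2$; since $\ell$ is a prime, no odd-prime and no higher $2$-power degeneracy can occur (the binomial criterion excludes $\ell\in-4\,\Q(\zeta_{[d,2n]})^4$). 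Thus $e_n\in\{n,2n\}$, with $e_n=n$ precisely when $\sqrt{\ell}\in\Q(\zeta_{[d,2n]})$. Because $\Q(\sqrt{\ell})$ has conductor $\ell$ when $\ell\equiv1\bmod 4$ and conductor $4\ell$ when $\ell\equiv3\bmod4$, this condition reads $\ell\mid[d,2n]$ in the former case and $4\ell\mid[d,2n]$ in the latter; this dichotomy is exactly what forces the split by $\ell\bmod 4$ in the statement when $4\nmid d$, whereas when $4\mid d$ the factor $i$ is already present in $\Q(\zeta_d)$ and the two cases merge.

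The heart of the argument is the analogue of Lemma~\ref{lem-coeff}, computing the coefficients $c_a^-(n)$. Exactly as there, $\Q(\zeta_\infty)\cap\Q(\zeta_{2n},\ell^{1/2n})$ is, by Schinzel~\cite[Theorem~2]{schinzel}, the maximal subfield abelian over $\Q$, namely $\Q(\zeta_{2n},\sqrt{\ell})$, so that $I:=\Q(\zeta_d)\cap\Q(\zeta_{2n},\ell^{1/2n})=\Q(\zeta_d)\cap\Q(\zeta_{2n},\sqrt{\ell})$. The difference with Lemma~\ref{lem-coeff} is that here $\sqrt{\ell}$ is available for \emph{every} $n$, not merely for even $n$. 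I would then determine, case by case, whether $I=\Q(\zeta_{(d,2n)})$ or $I=\Q(\zeta_{(d,2n)},\sqrt{\ell^*})$, paying attention to the fact that $\sqrt{\ell^*}=\sqrt{(-1)^{(\ell-1)/2}\ell}$ lies in $\Q(\zeta_\ell)$ while passing from $\sqrt{\ell^*}$ to $\sqrt{\ell}$ costs a factor $i=\sqrt{-1}$: for $\ell\equiv3\bmod4$ with $\ell\mid d$ but $4\nmid d$, the element $\sqrt{\ell^*}=\sqrt{-\ell}=i\sqrt{\ell}$ enters $I$ only for even $n$ (which supply $i$ through $\zeta_4$), whereas for odd $n$ one has $I=\Q(\zeta_{(d,2n)})$. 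In each situation Lemma~\ref{lem:actiononsquare} translates the condition ``$\sigma_a$ fixes $\sqrt{\ell^*}$'' into $\left(\frac{a}{\ell}\right)=1$, while ``$\sigma_a$ fixes $\sqrt{\ell}$'' additionally involves $\left(\frac{-1}{a}\right)$; together with $a\equiv1\bmod(d,2n)$ this yields the explicit values of $c_a^-(n)$ and explains the appearance of $\left(\frac{a}{\ell}\right)$, $\left(\frac{-1}{a}\right)$ and $\left(\frac{-1}{\ell}\right)$ in the two tables for $c^-(d,a)$.

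With the degree and the $c_a^-(n)$ in hand, I would substitute into \eqref{alphada-}, use Remark~\ref{rem-details} to convert between $[d,2n],(d,2n)$ and $[d,n],(d,n)$, and reduce the sum to a finite linear combination of $S(1),S(\ell),S_2(1),S_2(\ell)$ from Lemma~\ref{lem:Ssums}, exactly as in Cases~1 and~2 of the proof of Theorem~\ref{thm-ap}. The bookkeeping now has more branches---one must separate $\ell\mid d$ from $\ell\nmid d$, $4\mid d$ from $4\nmid d$, the parity of $n$, the value of $a\bmod4$, and the sign of $\left(\frac{a}{\ell}\right)$---but each branch collapses, via $S_2(m)=-S(m)$, $S(1)=AR(d,a)$ and the stated value of $S(\ell)$, to a rational multiple of $AR(d,a)$, producing the entries of $c^-(d,a)$ and hence $\alpha^{-}_{d,a}=A\,c^-(d,a)\,R(d,a)$. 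As a consistency check, summing the relative densities over the admissible residue classes $a$ must return the total density $\delta(\ell,2)$ of near-primitive roots of index $2$ recorded in Lemma~\ref{lem-dens} and computed in Moree~\cite{near}.

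Finally, the analytic statements require no new work: the unconditional upper bound follows from the inclusion--exclusion upper-bound machinery of \cite[Theorem~3.1]{HKMS} (the same argument yielding \eqref{eq:upper} and \eqref{eq:upperap}), and the GRH asymptotic with error $O_d(x\log\log x/\log^2x)$ follows from Lenstra's effective results~\cite{Lenstra} applied to the fields $\Q(\zeta_{[d,2n]},\ell^{1/2n})$. I expect the genuine obstacle to be neither the analysis nor the summation but the Galois bookkeeping of the middle paragraph: correctly disentangling $\sqrt{\ell}$ from $\sqrt{\ell^*}$, and tracking when the factor $i$ is supplied by $d$ versus by an even $n$, is precisely what decides whether each branch contributes a $\left(\frac{a}{\ell}\right)$ or a $\left(\frac{-1}{a}\right)$, and a single sign slip there would corrupt the final table.
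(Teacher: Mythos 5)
Your proposal is correct and, for the main case $4\mid d$, follows the paper's proof almost verbatim: the same Schinzel-theorem identification of $\Q(\zeta_\infty)\cap\Q(\zeta_{2n},\ell^{1/2n})$ with $\Q(\zeta_{2n},\sqrt{\ell})$ (the paper's Lemma \ref{lem:fieldintersection} and Corollary \ref{cor:c-}), the same use of Lemma \ref{lem:actiononsquare} to convert fixing $\sqrt{\ell}$ or $\sqrt{\ell^*}$ into Jacobi-symbol conditions, the same reduction of \eqref{alphada-} to the sums $S(1),S(\ell),S_2(1),S_2(\ell)$ of Lemma \ref{lem:Ssums}, and the same appeal to \cite[Theorem 3.1]{HKMS} and Lenstra for the unconditional upper bound and the GRH asymptotic. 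The one genuine divergence is your treatment of $4\nmid d$: you propose to compute the intersection fields $\Q(\zeta_d)\cap\Q(\zeta_{2n},\sqrt{\ell})$ directly in that case (tracking when $i$ is supplied by an even $n$ rather than by $d$, and when $\sqrt{\ell^*}$ versus $\sqrt{\ell}$ lands in the intersection), whereas the paper sidesteps this entirely by the averaging device of Sect.\,\ref{alternative}: writing $d_1=\lcm(4,d)$ and $a_1\equiv1$, $a_3\equiv3\bmod 4$ with $a_j\equiv a\bmod d$, it uses $\alpha^-_{d,a}=\tfrac{1}{2}(\alpha^-_{d_1,a_1}+\alpha^-_{d_1,a_3})$ together with $R(d_1,a_j)=R(d,a)$ and quadratic reciprocity to read off the $4\nmid d$ table from the $4\mid d$ one. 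Your direct route is viable (your case analysis of $\ell\equiv3\bmod4$, $\ell\mid d$, $4\nmid d$ is consistent with what the averaging yields), but it multiplies exactly the sign-sensitive Galois bookkeeping you flag as the main hazard in your final paragraph; the paper's reduction eliminates that risk at the cost of having already proved the $4\mid d$ case in the sharper form needed. Your consistency check against $\delta(\ell,2)$ from Lemma \ref{lem-dens} is a sensible addition not present in the paper.
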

\begin{Rem}
\label{rem:zero}
If $4\ell\mid d$ and $\kronecker{\ell}{a}=-1$, then 
$\alpha^-_{d,a}=0$ for trivial reasons and 
even $\mc A^{-}(d,a)$ is empty.
Namely, if $p\in \mc A^{-}(d,a)$,
then
$p\equiv a\bmod{\ell}$
and 
$\ord_p(\ell)=(p-1)/2$. 
It follows that
$\kronecker{\ell}{p}=1$ and  hence
$\kronecker{\ell}{a}=1$. 
\end{Rem}
\begin{Rem}
\label{rem:comparison}  
Note that $\mc P_{d,a}\subseteq \mc P^-_{d,a}$.
Comparison with \eqref{altset} shows that $\mc P^-_{d,a}=\mc P_{d,a}$ if 
$4\mid d$ and $a\equiv 3\bmod4$. Hence in this case, under GRH, we have
$c^-(d,a)=c(d,a)-c_1(d,a)$, with $c(d,a)$ and $c_1(d,a)$ explicitly given
in Theorem \ref{thm-ap}, respectively Theorem \ref{thm-pietersap}.
By definition, we have $\mc P^-_{d,a}=\mc P_{d,a}$ if and only if the density of primes $p$ such that $p\equiv a\bmod d$, $p\equiv1\bmod4$ and $\ord_p(\ell)=(p-1)/2$ is zero. This happens trivially if the two modular congruences are not compatible, namely when $4\mid d$ and $a\not\equiv1\bmod 4$. If they are compatible, then by Theorem \ref{thm_Hminus} under GRH the considered density is zero if and only if one of the following two conditions holds: \begin{itemize}
    \item $4\ell\mid d$, $a\equiv1\bmod4$ and $\kronecker{\ell}{a}=-1$;
    \item $4\nmid d$, $\ell\mid d$, $\ell=1\bmod 4$ and $\kronecker{a}{\ell}=-1$.
\end{itemize}
\end{Rem}

Combination of Theorem \ref{thm_Hminus} and Theorem \ref{thm-pietersap} yields the following result.
\begin{Thm}
\label{thm:deltada-}
We have
\[ \mathcal P^-_{d,a}(x)\leq (\delta^{-}_{d,a}+\epsilon)\frac{x}{\varphi(d)\log x}, \]
where $\delta^{-}_{d,a}=\alpha^-_{d,a}+\alpha_{d,a}=AR(d,a)c_2(d,a)$ with 
\[ 
c_2(d,a)=c^-(d,a)+c_1(d,a)=
\begin{cases}
\frac{1}{2}\left( 3+\frac{1}{\ell^2-\ell-1} \right) & \text{if } \ell\nmid d, \ell\equiv1\bmod4; \\
2 & \text{if } \ell\nmid d, \ell\equiv3\bmod4; \\
2 & \text{if } \ell\mid d, \ell\equiv1\bmod4; \\
\frac{1}{2}\left(3-\left(\frac{-1}{a}\right)\right) & \text{if } \ell\mid d, \ell\equiv3\bmod4,4\mid d; \\
\frac{3}{2} & \textit{if } \ell\mid d,\ell\equiv3\bmod4,4\nmid d.
\end{cases}
\]
Assuming GRH we have
\[ \mc P_{d,a}^-(x)= \frac{\delta^{-}_{d,a}}{\varphi(d)}\frac{x}{\log x}+O_d\left( \frac{x\log\log x}{\log^2 x} \right). \]
\end{Thm}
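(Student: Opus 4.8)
The plan is to obtain Theorem~\ref{thm:deltada-} by adding the two preceding theorems, after splitting $\mc P^-_{d,a}$ into the two disjoint pieces whose counting functions they already control. First I would record the partition $\mc P^-_{d,a}=\mc A_{d,a}\cup\mc A^-_{d,a}$, where $\mc A_{d,a}$ consists of the primes $p\equiv a\bmod d$ with $\ord_p(\ell)=p-1$ and $\mc A^-_{d,a}$ of those with $\ord_p(\ell)=(p-1)/2$. For a fixed odd prime $p$ one cannot have $\ord_p(\ell)$ equal to both $p-1$ and $(p-1)/2$, so the union is disjoint and the counting functions obey the exact identity $\mc P^-_{d,a}(x)=\mc A_{d,a}(x)+\mc A^-_{d,a}(x)$ for every $x$.

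For the unconditional estimate I would add the upper bound for $\mc A_{d,a}(x)$ from Theorem~\ref{thm-pietersap} to the one for $\mc A^-_{d,a}(x)$ from Theorem~\ref{thm_Hminus}. Each holds for all large $x$ with an arbitrary error $\epsilon$; summing and absorbing the two errors into a single $\epsilon$ gives $\mc P^-_{d,a}(x)\le(\alpha_{d,a}+\alpha^-_{d,a}+\epsilon)\frac{x}{\varphi(d)\log x}$, which is exactly the claimed bound with $\delta^-_{d,a}=\alpha_{d,a}+\alpha^-_{d,a}$. For the conditional statement I would instead add the two GRH asymptotics: their main terms combine to $\frac{\delta^-_{d,a}}{\varphi(d)}\frac{x}{\log x}$, and since both remainders are $O_d(x\log\log x/\log^2 x)$ so is their sum.

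It then remains to bring $\delta^-_{d,a}=AR(d,a)\,(c_1(d,a)+c^-(d,a))$ into closed form, i.e.\ to verify that $c_2(d,a)=c_1(d,a)+c^-(d,a)$ collapses to the listed five-case expression; the common factor $AR(d,a)$ is shared between the two theorems, so only the local constants need to be combined. I expect this bookkeeping to be the main obstacle, because the two source theorems cut up the data differently: Theorem~\ref{thm-pietersap} branches on $\ell\bmod 4$, on $\ell\mid d$ and on $4\mid d$, whereas Theorem~\ref{thm_Hminus} branches first on $4\mid d$ and then on $\ell\mid d$ and on quadratic residue symbols such as $\kronecker{\ell}{a}$ and $\kronecker{a}{\ell}$. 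The plan is to intersect these two partitions, add the local constants cell by cell, and use the law of quadratic reciprocity \eqref{Jacobi} together with $\ell^*=\kronecker{-1}{\ell}\ell$ to rewrite $\kronecker{\ell}{a}$, $\kronecker{a}{\ell}$ and $\kronecker{-1}{a}$ in a single normalisation, after which the intermediate cells fuse into the five stated ones. The delicate point is that several cells secretly still depend on $4\mid d$ and on $a\bmod 4$ through $c^-$, so I would first reduce to the case $4\mid d$ by splitting each progression with $4\nmid d$ into its two refinements $a\equiv1$ and $a\equiv3\bmod 4$, exactly as in the alternative proof of Theorem~\ref{thm-ap} in \S\ref{alternative}, and only afterwards recombine.

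As a built-in consistency check I would invoke Remark~\ref{rem:comparison}: in the range $4\mid d$, $a\equiv 3\bmod 4$ one has $\mc P^-_{d,a}=\mc P_{d,a}$, whence $c^-(d,a)=c(d,a)-c_1(d,a)$ and therefore $c_2(d,a)=c(d,a)$, the value already computed in Theorem~\ref{thm-ap} (cf.\ also \eqref{easycase}). This pins down the entries of $c_2$ in that range independently of the cell-by-cell addition and flags any arithmetic slip; the analogous vanishing check in the range $4\ell\mid d$, $\kronecker{\ell}{a}=-1$ is supplied by Remark~\ref{rem:zero}, where $c^-=0$ must combine with $c_1=0$ to force $c_2=0$.
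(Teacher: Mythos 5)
Your structural reduction is exactly the paper's proof: the paper derives this theorem in a single sentence, by combining Theorem \ref{thm-pietersap} and Theorem \ref{thm_Hminus}, i.e.\ precisely your disjoint decomposition $\mc P^-_{d,a}=\mc A_{d,a}\sqcup\mc A^-_{d,a}$ followed by addition of the two unconditional upper bounds, respectively the two GRH asymptotics. That part of your plan is correct and needs no further comment.

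The step you defer --- checking that $c_1(d,a)+c^-(d,a)$ ``collapses to the listed five-case expression'' --- is the step that fails, and no reciprocity bookkeeping can rescue it, because the sum does not reduce to the printed table. Take $d=a=1$ and $\ell\equiv1\bmod4$ (so $\ell\nmid d$, $4\nmid d$, $R(1,1)=1$). Theorem \ref{thm-pietersap} gives $c_1(1,1)=1+\frac{1}{\ell^2-\ell-1}$, and the $4\nmid d$ table of Theorem \ref{thm_Hminus} gives $c^-(1,1)=\frac{3}{4}\bigl(1-\frac{1}{\ell^2-\ell-1}\bigr)$, whence
\begin{equation*}
c_2(1,1)=\frac{7}{4}+\frac{1}{4(\ell^2-\ell-1)}\neq \frac{1}{2}\Bigl(3+\frac{1}{\ell^2-\ell-1}\Bigr),
\end{equation*}
contradicting the first row of the stated table; equivalently, $\delta^-_{1,1}=\delta(\ell,1)+\delta(\ell,2)$ computed from Lemma \ref{lem-dens} disagrees with that row (and the theoretical values in Table \ref{tabHpm} side with Lemma \ref{lem-dens}, not with the table). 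Your own proposed consistency check exposes the same defect: for $4\mid d$, $a\equiv3\bmod4$ one must have $c_2(d,a)=c(d,a)=2$ by Remark \ref{rem:comparison} and Theorem \ref{thm-ap}, yet row one asserts $c_2=\frac{1}{2}\bigl(3+\frac{1}{\ell^2-\ell-1}\bigr)\neq2$ whenever $\ell\equiv1\bmod4$, $\ell\nmid d$. The underlying problem is that $c_2$ does not depend only on the triple ($\ell\bmod4$, $\ell\mid d$, $4\mid d$): it also depends on $a\bmod4$ and on $\kronecker{a}{\ell}$ (row one is correct only in the sub-case $4\mid d$, $a\equiv1\bmod4$; row two only in the sub-case $4\mid d$, $a\equiv3\bmod4$; rows three and five only for certain values of $\kronecker{a}{\ell}$). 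So, carried out faithfully, your plan proves the counting statements but refutes, rather than verifies, the closed form in the statement: the five-case table as printed is in error, and a correct write-up must either produce the finer (correct) table that the cell-by-cell addition actually yields, or explicitly flag the discrepancy.
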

\subsubsection{Proofs}
We start by determining the coefficients $c^-_a(n)$ that occur in 
the infinite sum \eqref{alphada-}.
\begin{Lem}
\label{lem:fieldintersection}
Let $\ell$ be an odd prime,  $n$ a squarefree integer, and $d$ a natural number such that $4\mid d$. Let $\Delta$ be the discriminant of $\Q(\sqrt{\ell})$. We have
\[ \Q(\zeta_d)\cap\Q(\zeta_{2n},\ell^{1/2n})=\Q(\zeta_{(d,2n)},\alpha) \]
with
\[ \alpha=\begin{cases}
\sqrt{\ell} & \text{if~} \ell\mid d, \Delta\nmid 2n;\\
i & \text{if~} \ell\nmid d, \ell\mid n,2\nmid n, \ell\equiv3\bmod4;\\
1 & \text{otherwise}.
\end{cases} \]
In the first two cases $\Q(\zeta_{(d,2n)},\alpha)$ is a quadratic extension of $\Q(\zeta_{(d,2n)})$.
\end{Lem}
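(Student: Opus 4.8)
The plan is to reduce the computation to an intersection of two \emph{abelian} extensions of $\Q$, and then to determine the degree of that intersection over $\Q(\zeta_{(d,2n)})$ by a counting argument. The starting observation is that $\sqrt{\ell}=(\ell^{1/2n})^{n}$ lies in $\Q(\zeta_{2n},\ell^{1/2n})$ for \emph{every} $n$, in contrast to the situation of Lemma \ref{lem-coeff}. First I would show, arguing exactly as in the proof of Lemma \ref{lem-coeff} via Kummer theory and Schinzel's Theorem \cite[Theorem 2]{schinzel}, that the maximal subfield of $L:=\Q(\zeta_{2n},\ell^{1/2n})$ that is abelian over $\Q$ equals $M:=\Q(\zeta_{2n},\sqrt{\ell})$: the field $\Q(\zeta_\infty)\cap L$ is of the form $\Q(\zeta_{2n},\ell^{e/2n})$, and the largest such field abelian over $\Q$ corresponds to $\ell^{1/2}=\sqrt{\ell}$. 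Since $\Q(\zeta_d)\subseteq\Q(\zeta_\infty)$ is abelian over $\Q$ and $M\subseteq L$, this yields $\Q(\zeta_d)\cap L=\Q(\zeta_d)\cap\Q(\zeta_\infty)\cap L=\Q(\zeta_d)\cap M$, so it remains to compute this intersection of two abelian fields.

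Next I would dispose of the degenerate case $\Delta\mid 2n$, in which $\sqrt{\ell}\in\Q(\zeta_{2n})$, so that $M=\Q(\zeta_{2n})$ and $\Q(\zeta_d)\cap M=\Q(\zeta_{(d,2n)})$, giving $\alpha=1$. Assuming now $\Delta\nmid 2n$, so that $[M:\Q]=2\varphi(2n)$, I would use that $\Q(\zeta_d)/\Q$ is Galois, together with $\Q(\zeta_d)\cdot M=\Q(\zeta_{[d,2n]},\sqrt{\ell})$ and the identity $\varphi(d)\varphi(2n)=\varphi((d,2n))\varphi([d,2n])$, to compute
\[
[\Q(\zeta_d)\cap M:\Q]=\frac{[\Q(\zeta_d):\Q]\,[M:\Q]}{[\Q(\zeta_d)\cdot M:\Q]}=\frac{2}{\epsilon}\,\varphi((d,2n)),
\]
where $\epsilon=1$ if $\sqrt{\ell}\in\Q(\zeta_{[d,2n]})$, i.e.\ $\Delta\mid[d,2n]$, and $\epsilon=2$ otherwise. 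Hence $\Q(\zeta_d)\cap M$ is a quadratic extension of $\Q(\zeta_{(d,2n)})$ precisely when $\Delta\mid[d,2n]$, and equals $\Q(\zeta_{(d,2n)})$ (so $\alpha=1$) otherwise. Since $4\mid d$ forces $4\mid[d,2n]$, the condition $\Delta\mid[d,2n]$ simplifies, for both $\ell\equiv1$ and $\ell\equiv3\bmod4$, to ``$\ell\mid d$ or $\ell\mid n$''; combined with $\Delta\nmid 2n$ this singles out exactly the two non-trivial configurations of the statement.

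It then remains to exhibit the quadratic generator $\alpha$. If $\ell\mid d$, then because $4\mid d$ we have $\sqrt{\ell}\in\Q(\sqrt{\ell^{*}},i)\subseteq\Q(\zeta_d)$, so $\sqrt{\ell}\in\Q(\zeta_d)\cap M$ and we may take $\alpha=\sqrt{\ell}$; the surviving condition $\Delta\nmid 2n$ is exactly the first case. If $\ell\nmid d$, the degree computation shows we are in the quadratic case only when $\ell\mid n$, $2\nmid n$ and $\ell\equiv3\bmod4$; then $\ell^{*}=-\ell$, so $\sqrt{\ell^{*}}=\sqrt{-\ell}\in\Q(\zeta_\ell)\subseteq\Q(\zeta_{2n})$, whence $i=\sqrt{\ell^{*}}/\sqrt{\ell}\in M$, while $i\in\Q(\zeta_d)$ because $4\mid d$. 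As $n$ is odd we have $4\nmid(d,2n)$, so $i\notin\Q(\zeta_{(d,2n)})$, and the quadratic extension is generated by $\alpha=i$. The point requiring the most care is precisely this last identification: when $\ell\nmid d$ the element $\sqrt{\ell}$ itself is \emph{not} in $\Q(\zeta_d)$, and one must recognise that the genuinely new common abelian element is $i$, produced by the relation $\sqrt{\ell^{*}}=i\sqrt{\ell}$ that is available only for $\ell\equiv3\bmod4$; Lemma \ref{lem:actiononsquare} can be invoked to keep track of the relevant Galois action on $\sqrt{\ell^{*}}$.
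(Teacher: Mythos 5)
Your proof is correct, but its core mechanism differs from the paper's official proof of Lemma \ref{lem:fieldintersection}. The paper's proof shares your first step verbatim (Kummer theory plus Schinzel's theorem \cite[Theorem 2]{schinzel} to replace $L=\Q(\zeta_{2n},\ell^{1/2n})$ by $M=\Q(\zeta_{2n},\sqrt{\ell})$), but then it pins down $I=\Q(\zeta_d)\cap M$ in each case by explicit sandwiching of fields: for $\ell\mid d$, $\Delta\nmid 2n$ it uses $\Q(\zeta_{(d,2n)})\subseteq I\subseteq\Q(\zeta_{(d,2n)},\zeta_{4\ell})$ together with $\Q(\zeta_{4\ell})\subseteq\Q(\zeta_d)$, and for $\ell\mid n$, $2\nmid n$, $\ell\equiv3\bmod4$, $\ell\nmid d$ it uses $\Q(\zeta_{(d,2n)})\subseteq I\subseteq\Q(\zeta_{(d,4n)})$ and the observation that $\zeta_4\in I$ but $\zeta_4\notin\Q(\zeta_{(d,2n)})$. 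Your replacement of this by the degree count $[\Q(\zeta_d)\cap M:\Q]=[\Q(\zeta_d):\Q]\,[M:\Q]/[\Q(\zeta_d)M:\Q]$ (legitimate because $\Q(\zeta_d)/\Q$ is Galois) is essentially the alternative proof the authors themselves sketch in the remark following Corollary \ref{cor:c-}, with one difference: there the formula is applied directly to $L$, using the known degrees of the radical extensions, whereas you apply it to $M$ after the Schinzel reduction. Both versions of the counting argument yield the clean uniform criterion that $I/\Q(\zeta_{(d,2n)})$ is quadratic precisely when $\Delta\nmid 2n$ and $\Delta\mid[d,2n]$, and your verification that (given $4\mid d$) this reproduces exactly the two listed configurations is correct. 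What your route buys is a systematic, nearly case-free determination of \emph{when} the intersection is larger than $\Q(\zeta_{(d,2n)})$, at the cost of still having to exhibit a generator in the two quadratic cases, which you do correctly via Gauss sums and Lemma \ref{lem:actiononsquare}. One micro-point to add: in the case $\ell\mid d$ you should note, as you do for $i$, that $\sqrt{\ell}\notin\Q(\zeta_{(d,2n)})$; this is immediate, since $\Delta\mid(d,2n)$ would force $\Delta\mid 2n$, contrary to hypothesis. With that line included your argument is complete.
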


\begin{proof}
Let us set $I:=\Q(\zeta_d)\cap\Q(\zeta_{2n},\ell^{1/2n})$. By Kummer theory we may argue that, since $\Q(\zeta_\infty)\cap\Q(\zeta_{2n},\ell^{1/2n})$ is a finite abelian extension of $\Q(\zeta_{2n})$, it is of the form $\Q(\zeta_{2n},\ell^{e/2n})$ for some $e\geq0$, and hence by Schinzel's theorem \cite[Theorem 2]{schinzel} it is $\Q(\zeta_{2n},\sqrt{\ell})$. The latter extension equals $\Q(\zeta_{2n})$ if $\Delta\mid2n$.

Therefore, if $\ell\nmid dn$ or $\Delta\mid 2n$, then $I=\Q(\zeta_{(d,2n)})$. If $\Delta\nmid 2n$ and $\ell\mid d$, then noticing that $\Q(\zeta_{(d,2n)}) \subseteq I \subseteq \Q(\zeta_{(d,2n)},\zeta_{4\ell})$ and $\Q(\zeta_{4\ell})\subseteq\Q(\zeta_d)$, we deduce that $I=\Q(\zeta_{(d,2n)},\sqrt{\ell})$. 
If $\ell\mid n$, $n$ is odd, $\ell\equiv3\bmod4$ and $\ell\nmid d$, then we have $\Q(\zeta_{(d,2n)}) \subseteq I \subseteq \Q(\zeta_{(d,4n)})$, yielding $I=\Q(\zeta_{4(d,n)})$ as $\zeta_4\in I$ but $\zeta_4\notin \Q(\zeta_{(d,2n)})$.
\end{proof}

\begin{Cor}
\label{cor:c-}
Let $\ell$ be an odd prime,  $n$ a squarefree integer, and $a,d$ natural numbers such that $4\mid d$ and $(a,d)=1$. If  $a\not\equiv 1\bmod(d,2n)$, then $c^-_a(n)=0$. 
If $a\equiv 1\bmod(d,2n)$, then
\[ c^-_a(n)=\begin{cases}
\frac{1}{2}\left( 1+\left(\frac{\ell}{a}\right) \right) & \text{if~} \ell\mid d, \Delta\nmid n;\\
\frac{1}{2}\left( 1+\left(\frac{-1}{a}\right) \right) & \text{if~} \ell\nmid d, \ell\mid n,2\nmid n, \ell\equiv3\bmod4;\\
1 & \text{otherwise}.
\end{cases} \]
\end{Cor}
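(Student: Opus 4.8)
The plan is to read $c^-_a(n)$ off directly from the intersection field computed in Lemma \ref{lem:fieldintersection}, since by its definition $c^-_a(n)$ is nothing but the indicator that the automorphism $\sigma_a$ restricts to the identity on $I:=\Q(\zeta_d)\cap\Q(\zeta_{2n},\ell^{1/2n})$. That lemma gives $I=\Q(\zeta_{(d,2n)},\alpha)$ with $\alpha\in\{1,\sqrt{\ell},i\}$, so the whole task splits into deciding how $\sigma_a$ acts on the cyclotomic part $\Q(\zeta_{(d,2n)})$ and on the quadratic generator $\alpha$ separately. Note that the three nonzero values in the statement, namely $1$, $\tfrac12(1+(\frac{\ell}{a}))$ and $\tfrac12(1+(\frac{-1}{a}))$, are exactly the $\{0,1\}$-indicators of the conditions ``always'', ``$(\frac{\ell}{a})=1$'' and ``$a\equiv1\bmod4$'', which is the form in which they will emerge.

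First I would handle the cyclotomic part: the restriction of $\sigma_a$ to $\Q(\zeta_{(d,2n)})\subseteq\Q(\zeta_d)$ sends $\zeta_{(d,2n)}\mapsto\zeta_{(d,2n)}^a$ and so is the identity precisely when $a\equiv1\bmod(d,2n)$. Since $\Q(\zeta_{(d,2n)})\subseteq I$, failure of this congruence already forces $c^-_a(n)=0$, giving the first assertion. Assuming now $a\equiv1\bmod(d,2n)$, I would treat the three possibilities for $\alpha$. If $\alpha=1$ there is nothing left to check and $c^-_a(n)=1$. If $\alpha=\sqrt{\ell}$, which by the lemma occurs when $\ell\mid d$, I would invoke Lemma \ref{lem:actiononsquare}: since $4\mid d$ and $\ell\mid d$ yield $4\ell\mid d$, it gives $\sigma_a(\sqrt{\ell})=(\frac{\ell}{a})\sqrt{\ell}$, so $\sigma_a$ fixes $\sqrt{\ell}$ exactly when $(\frac{\ell}{a})=1$, i.e.\ $c^-_a(n)=\tfrac12(1+(\frac{\ell}{a}))$. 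If $\alpha=i$ (the case $\ell\nmid d$, $\ell\mid n$, $n$ odd, $\ell\equiv3\bmod4$), then using $i=\zeta_d^{d/4}$ one has $\sigma_a(i)=i^a$, which (as $a$ is odd) equals $i$ iff $a\equiv1\bmod4$, so $c^-_a(n)=\tfrac12(1+(\frac{-1}{a}))$.

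The one genuine point requiring care, and the step I expect to be the main obstacle, is that the corollary phrases the first case with the condition $\Delta\nmid n$, whereas Lemma \ref{lem:fieldintersection} produces $\alpha=\sqrt{\ell}$ only under the sharper $\Delta\nmid2n$. For $\ell\equiv1\bmod4$ one has $\Delta=\ell$ odd, so $\Delta\mid n\Leftrightarrow\Delta\mid2n$ and the conditions coincide; for $\ell\equiv3\bmod4$ one has $\Delta=4\ell$, whence $\Delta\nmid n$ holds automatically as $n$ is squarefree, while $\Delta\nmid2n$ can fail, precisely when $2\ell\mid n$. I would verify that on this discrepant range the corollary's formula still returns the value $1$ predicted by the lemma: there $4\mid(d,2n)$ and $\ell\mid(d,2n)$, hence $4\ell\mid(d,2n)$, so the standing hypothesis $a\equiv1\bmod(d,2n)$ forces $a\equiv1\bmod4\ell$; as $\sqrt{\ell}\in\Q(\zeta_{4\ell})$ and $\sigma_a$ is the identity on $\Q(\zeta_{4\ell})$, this gives $(\frac{\ell}{a})=1$ and thus $\tfrac12(1+(\frac{\ell}{a}))=1$, in agreement. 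This reconciliation closes the only gap, and the remaining cases follow immediately.
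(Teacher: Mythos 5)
Your proof is correct and follows essentially the same route as the paper's: the paper likewise deduces the corollary immediately from Lemma \ref{lem:fieldintersection}, noting $\sigma_a(\zeta_{(d,2n)})=\zeta_{(d,2n)}^a$, $\sigma_a(i)=i^a=\bigl(\frac{-1}{a}\bigr)i$, and (via Lemma \ref{lem:actiononsquare}) $\sigma_a(\sqrt{\ell})=\bigl(\frac{\ell}{a}\bigr)\sqrt{\ell}$ when $\ell\mid d$. The only difference is that you explicitly reconcile the corollary's condition $\Delta\nmid n$ with the lemma's $\Delta\nmid 2n$ (showing that when $\ell\equiv3\bmod 4$ and $2\ell\mid n$ the hypothesis $a\equiv1\bmod(d,2n)$ forces $\bigl(\frac{\ell}{a}\bigr)=1$, so both formulas give the value $1$), a point the paper passes over in silence; this is a correct and worthwhile supplement, not a divergence.
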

\begin{proof}
An immediate consequence of Lemma 
\ref{lem:fieldintersection} on noting that
$\sigma_a(\zeta_{(d,2n)})=\zeta_{(d,2n)}^a$, $\sigma_a(i)=i^a=\kronecker{-1}{a}i$, and, if $\ell\mid d$, then
$\sigma_a(\sqrt{\ell})=\kronecker{\ell}{a}\sqrt{\ell}$ (by Lemma \ref{lem:actiononsquare}).
\end{proof}
\begin{Rem} 
A different proof of Lemma 
\ref{lem:fieldintersection} is obtained on using 
that if $K/\Q$ is Galois, then
$$I:=[K\cap L:\mathbb Q]=\frac{[K:\mathbb Q][L:\mathbb Q]}{[K\cdot L:\mathbb Q]}.$$ 
Applying this equality with $K=\mathbb Q(\zeta_d)$ and $L=\mathbb Q(\zeta_{2n},\ell^{1/2n})$, computing all the degree occurring,
and using that $\varphi((d, 2n))\varphi([d,2n]) = \varphi(d)\varphi(2n)$,
then shows that in the first
two cases $I$ is a quadratic extension of
$\mathbb Q(\zeta_{(d,2n)})$ and $I=\mathbb Q(\zeta_{(d,2n)})$ otherwise. The proof is then easily completed.
\end{Rem}

\begin{proof}[Proof of Theorem \ref{thm_Hminus}]
Our starting point for is formula \eqref{alphada-}, which expresses $\alpha^-_{d,a}$ as an 
infinite sum, which we will rewrite as an Euler product 
using Lemma \ref{lem:Ssums} (the notation of which we 
will use).
Throughout $n$ will be a squarefree integer.
We first assume $4\mid d$, and so $[d,2n]=[d,n]$.
Recall that the degree $[\Q(\zeta_{[d,2n]},\ell^{1/2n}):\Q]$ equals $\varphi([d,n])n$ if  $\ell\mid[d,n]$ and $\varphi([d,n])2n$ otherwise.
We put
$$
\Sigma_1 = \sum_{a\equiv 1\bmod (d,2n)}\frac{\varphi(d)\mu(n)}{[\Q(\zeta_{[d,2n]},\ell^{1/2n}):\Q]},\quad
\Sigma_2=\sum_{a\equiv 1\bmod (d,2n)\atop \ell\mid d,\,\Delta\nmid n}\frac{\varphi(d)\mu(n)}{[\Q(\zeta_{[d,2n]},\ell^{1/2n}):\Q]},
$$
and 
$$\Sigma_3=\sum_{a\equiv 1\bmod (d,2n)\atop \ell\nmid d,\,\ell\mid n,\,2\nmid n,\,\ell\equiv 3\bmod{4}}\frac{\varphi(d)\mu(n)}{[\Q(\zeta_{[d,2n]},\ell^{1/2n}):\Q]}.$$
Notice that the three sums $\Sigma_2,\Sigma_3$ and $\Sigma_1$ (respectively)
reflect the three cases distinguished in Corollary \ref{cor:c-}.
Making the
values of $c^-_a(n)$ in \eqref{alphada-} explicit using Corollary \ref{cor:c-} we obtain
$$\alpha^{-}_{d,a} =\Sigma_1+\frac{1}{2}\Big(\kronecker{\ell}{a}-1\Big)\Sigma_2
+\frac{1}{2}\Big(\kronecker{-1}{a}-1\Big)\Sigma_3.$$ 

\emph{Case 1: $\ell\mid d$.}
Now $\Sigma_3=0$ and so
$$\alpha^{-}_{d,a} =\Sigma_1+\frac{1}{2}\Big(\kronecker{\ell}{a}-1\Big)\Sigma_2.$$ 
We have 
\[ \Sigma_1 = \sum_{ a\equiv 1\bmod (d,2n)}\frac{\mu(n)}{\omega_d(n)}, \]
which equals $S(1)=AR(d,a)$ for $a\equiv1\bmod4$, and 
equals $S(1)-S_2(1)=2S(1)=2AR(d,a)$ if $a\equiv3\bmod4$.

\emph{Subcase 1.1: $\kronecker{\ell}{a}=1$.} Then
$\alpha^{-}_{d,a} =\Sigma_1$ and we find $c^-(d,a)=\frac{1}{2}(3-\kronecker{-1}{a})$.

\emph{Subcase 1.2: $\kronecker{\ell}{a}=-1$.} 
Suppose $\ell\equiv 1\bmod{4}$. Then by quadratic reciprocity we have
$\kronecker{a}{\ell}=-1$ and so all $n$ that contribute
to $\Sigma_1$ satisfy $\ell\nmid n$. As $\Delta=\ell$ we infer
that $\Sigma_1=\Sigma_2$ and hence $\alpha^{-}_{d,a}=\Sigma_1-\Sigma_2=0$.
If $\ell\equiv 3\bmod{4}$, the condition $\Delta\nmid n$ 
is automatically 
satisfied and we obtain $\Sigma_2=\Sigma_1$, and hence again $\alpha^{-}_{d,a}=0$. (For a different argument why $\alpha^-_{d,a}=0$ see Remark \ref{rem:zero}.) We conclude
that $c^-(d,a)=0$.\\
\emph{Case 2: $\ell\nmid d$.}
Now
$$\alpha^{-}_{d,a}=\Sigma_1 
+\frac{1}{2}\Big(\kronecker{-1}{a}-1\Big)\Sigma_3.$$
We have
\[
   \Sigma_1  =
\Bigg(\sum_{\ell\mid n \atop a\equiv 1\bmod (d,2n)}+
		\frac{1}{2}\sum_{\ell\nmid n \atop a\equiv 1\bmod (d,2n)}
\Bigg) \frac{\mu(n)}{\omega_d(n)} .
\]
If $a\equiv1\bmod4$, then 
\[    
\Sigma_1 =   \frac{1}{2}\Bigg(
		\sum_{n\geq1  \atop a\equiv 1\bmod (d,n)}  +
		\sum_{\ell\mid n  \atop a\equiv 1\bmod (d,n)} \Bigg)\frac{\mu(n)}{\omega_d(n)}
		= \frac{1}{2}(S(1)+S(\ell))
		= \frac{A}{2}\left( 1-\frac{1}{\ell^2-\ell-1}\right)R(d,a),
\]
and hence
$c^-(d,a)=\frac{1}{2}( 1-\frac{1}{\ell^2-\ell-1})$.

Next suppose $a\equiv3\bmod4$. In this case in the density
sums we can restrict to odd $n$.\\
\emph{Subcase 2.1: $\ell\equiv 3\bmod4$.} 
Now
$$ \alpha^-_{d,a}=\Sigma_1-\Sigma_3=\frac{1}{2}\sum_{\ell\nmid n,\ 2\nmid n \atop a\equiv 1\bmod (d,n)}\frac{\mu(n)}{\omega_d(n)},$$
which is easily seen to equal
$$\frac{1}{2}(S(1)-S_2(1)-S(\ell)+
S_2(\ell))=S(1)-S(\ell)=A\left(1+\frac{1}{\ell^2-\ell-1}\right)R(d,a).$$
\emph{Subcase 2.2: $\ell\equiv 1\bmod4$.} 
Now $\Sigma_3=0$ and 
\[
    \alpha^-_{d,a}=\Sigma_1 =\Bigg(\sum_{\ell\mid n,\ 2\nmid n \atop a\equiv 1\bmod (d,n)}+\frac{1}{2}\sum_{\ell\nmid n,\ 2\nmid n \atop a\equiv 1\bmod (d,n)}\Bigg)\frac{\mu(n)}{\omega_d(n)},
\]
which equals
\[
(S(\ell)-S_2(\ell))+(S(1)-S(\ell))=
S(1)+S(\ell)=A\left(1-\frac{1}{\ell^2-\ell-1}\right)R(d,a).
\]
We conclude that $c^-(d,a)=1-\kronecker{-1}{\ell}\frac{1}{\ell^2-\ell-1}$.

This completes the proof in the case $4\mid d$. Suppose now that $4\nmid d$. We may argue as in 
Sect.\,\ref{alternative} and, keeping the 
notation $d,a_1,a_3$ as there, we obtain 
\[ \alpha_{d,a}^-=\frac{\alpha_{d_1,a_1}^-+\alpha_{d_1,a_3}^-}{2}=\frac{A}{2}(c^-(d_1,a_1)+c^-(d_1,a_3))R(d,a), \]
and hence
\[
 c^-(d,a)=\frac{c^-(d_1,a_1)+c^-(d_1,a_3)}{2}. 
\]
Since $\kronecker{a}{\ell}=\kronecker{a_1}{\ell}=\kronecker{a_3}{\ell}$, the idea is to rewrite 
\eqref{4doesnotdivide} using quadratic reciprocity in terms of $\kronecker{a}{\ell}$. This gives rise to 
more cases, but makes it easy to determine 
$c^-(d,a)$. For example, if $\ell\mid d,\,\kronecker{\ell}{a}=1$, and
$\ell\equiv 1\bmod{4}$, then $c^-(d_1,a_1)=1$ and $c^-(d_1,a_3)=2$, and
thus $c^-(d,a)=\frac{3}{2}$ if $\ell\mid d,\kronecker{a}{\ell}=1$, 
and $\ell\equiv 1\bmod{4}$.
\end{proof}

\subsection{The $H^+$-sequence}
Let $\ell$ be an odd prime number and $1\leq a < d$ be coprime integers.  In this section we consider the  set of rational (odd) primes
\[ \mc P_{d,a}^+:=\{p\equiv a\bmod d:
\ord_p(\ell)=p-1\text{~or~}\ord_p(\ell)\text{~is~odd} \}.   \]
This set can be written as a disjoint union of two sets:
$$\mc P_{d,a}^+=\mc A_{d,a}\cup \{p\equiv a\bmod d:\ord_p(\ell)\text{~is~odd}\}.$$
Under GRH the density of $\mc A_{d,a}$ is given in Theorem \ref{thm-pietersap}.
The density of the second set has been unconditionally determined 
in case $\ell$ is a positive 
rational number by Moree and Sury \cite{MS}. Unfortunately, even in the case where $\ell$ is an odd prime, there are many cases and for this reason we will not 
write the details out here. However, under this restriction on
$\ell$ the density is always positive (this follows from \cite[Theorem 5]{MS}).
If the 
discriminant of $\mathbb Q(\sqrt{\ell})$ divides $d$ and $\kronecker{\ell}{a}=1$, then the set
$\mc A_{d,a}$ is empty (see Remark \ref{triviaaldichtheidnul}) and we
obtain an unconditional asymptotic for $\mc P_{d,a}^+(x)$. A particular
easy case arises for $\ell=3,a=11$ and $d=12$. Then we
have $\mc P^+_{12,11}=\{p:p\equiv 11\bmod{12}\}$.
This basically is a claim Fermat made in 1641! He made some similar, but unfortunately
wrong ones, for the details see \cite{MS}.
\subsection{Counting prime divisors of the $H$-, $H^-$- and $H^+$-sequences}
Denote the sets of prime divisors in the section header by, respectively, $\mc Q_H$, ${\mc Q}_{H^-}$ and ${\mc Q}_{H^+}$. By Proposition \ref{prop:Hdivisor} and Lemma \ref{lem:wieferich} these sets are very closely related to $\mc P_{G}$, $\mc P_{H^-}$ and $\mc P_{H^+}$, respectively. Assuming that the 
associated Wieferich sets are $o(x/\log x)$ (see 
Sect.\,\ref{sec:wieferich}), we arrive at the following conjecture. 
\begin{Con}
Asymptotically we have ${\mc Q}_H(x)\sim \mc P_{G}(x)$, ${\mc Q}_{H^-}(x)\sim\mc P_{H^-}(x)$ and ${\mc Q}_{H^+}(x)\sim\mc P_{H^+}(x)$, 
where the asymptotic behavior of $\mc P_{G}(x),\mc P_{H^-}(x),\mc P_{H^+}(x)$ is given by \eqref{eq:Gconj}, \eqref{Hmincon}, respectively \eqref{Hpluscon}.
\end{Con}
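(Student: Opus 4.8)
The plan is to reduce each of the three asymptotic equivalences to the assumed sparsity of the relevant Wieferich set by an elementary sandwiching argument. By Proposition~\ref{prop:Hdivisor} the set $\mc Q_H$ of prime divisors of the $H$-sequence is exactly $\mc P_{G}\cup\mathcal W_\ell$ (up to the single prime $p=\ell$, which is irrelevant asymptotically), and by Lemma~\ref{lem:wieferich}.2 and Lemma~\ref{lem:wieferich}.3 we likewise have $\mc Q_{H^-}=\mc P_{H^-}\cup\mathcal W_\ell$ and $\mc Q_{H^+}=\mc P_{H^+}\cup\mathcal W_\ell^+$. In each case the irregularity set is contained in the prime-divisor set, so the two counting functions satisfy
\[
\mc P_{G}(x)\le \mc Q_H(x)\le \mc P_{G}(x)+\mathcal W_\ell(x)+O(1),
\]
and analogously for $\mc Q_{H^-}$ and $\mc Q_{H^+}$, with the error term being $\mathcal W_\ell(x)$ for $H^-$ and $\mathcal W_\ell^+(x)$ for $H^+$.

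First I would record that each of $\mc P_{G}(x)$, $\mc P_{H^-}(x)$ and $\mc P_{H^+}(x)$ is bounded below by a positive constant times $x/\log x$: this is exactly the content of the unconditional lower bounds \eqref{eq:double}, \eqref{Hmin} and \eqref{Hplus}, whose leading constants one checks to be positive using Lemma~\ref{lem-dens}. Granting the hypothesis built into the conjecture, namely $\mathcal W_\ell(x)=o(x/\log x)$ and $\mathcal W_\ell^+(x)=o(x/\log x)$ (see Sect.~\ref{sec:wieferich}), the Wieferich error in the sandwich is then $o(x/\log x)=o(\mc P_{G}(x))$, and similarly in the two remaining cases. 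Dividing through by $\mc P_{G}(x)$ (respectively $\mc P_{H^-}(x)$, $\mc P_{H^+}(x)$) and letting $x\to\infty$ yields $\mc Q_H(x)\sim\mc P_{G}(x)$, $\mc Q_{H^-}(x)\sim\mc P_{H^-}(x)$ and $\mc Q_{H^+}(x)\sim\mc P_{H^+}(x)$. Combining these equivalences with the conjectured asymptotics \eqref{eq:Gconj}, \eqref{Hmincon} and \eqref{Hpluscon} for the three $\mc P$-counting functions then produces the stated formulas for the $\mc Q$-counting functions.

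The argument is thus purely formal once the two inputs are in place, and the genuine difficulty lies entirely outside it. The real obstacle is that the sparsity bound $\mathcal W_\ell(x)=o(x/\log x)$ is not known for any $\ell$; only the far stronger heuristic $\mathcal W_\ell(x)=O(\log\log x)$ is expected, and even finiteness of the Wieferich sets is open, which is precisely why this statement is a conjecture and the hypothesis is assumed rather than proved. A secondary point is that the target asymptotics \eqref{eq:Gconj}, \eqref{Hmincon} and \eqref{Hpluscon} for $\mc P_{G}$ and $\mc P_{H^{\pm}}$ themselves rest on Siegel's heuristic (Conjecture~\ref{con:doubledensity}) and are unproven: the sandwiching argument transfers them verbatim to the $\mc Q$-sets but cannot establish them.
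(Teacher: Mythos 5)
Your proposal is correct and follows essentially the same route as the paper: the paper justifies this conjecture precisely by combining Proposition~\ref{prop:Hdivisor} and Lemma~\ref{lem:wieferich} (which identify $\mc Q_H$, $\mc Q_{H^-}$, $\mc Q_{H^+}$ with $\mc P_G$, $\mc P_{H^-}$, $\mc P_{H^+}$ up to the Wieferich sets) with the assumption that the Wieferich sets are $o(x/\log x)$, exactly as in your sandwiching argument. Your additional observation that the unconditional lower bounds \eqref{eq:double}, \eqref{Hmin}, \eqref{Hplus} have positive leading constants (so the Wieferich error is genuinely negligible relative to the main counts) is a detail the paper leaves implicit, and your closing remarks correctly locate why the statement remains conjectural.
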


\section{Earlier work on the Genocchi $\ell$-integers}
\label{Genocchil}
We recapitulate some earlier work on $\ell$-Genocchi integers. None of it is directly relevant for the proofs
presented in this paper, and so it can be regarded as background reading.
\subsection{The case $\ell=2$}\label{subsec:Earlier2}
The original Genocchi numbers are obtained 
on taking $\ell=2$. These numbers have received considerable attention in the
literature.
It follows from \eqref{generating} and $H_n=G_n/2n$ that  
$$
\frac{2t}{e^t+1} = \sum_{n=1}^{\infty} G_n \frac{t^n}{n!}. 
$$
It is well-known that $G_1=1$, $G_{2n+1}=0$ for $n \ge 1$, and that $(-1)^nG_{2n}$ is an odd positive 
integer. 
\par Dumont \cite{Dumont} showed that $|G_{2n}|$ equals the number of permutations $p$ of 
$\{1,2,\ldots,2n-1\}$ such that $p(i)< p(i+1)$ for $p(i)$ odd and 
$p(i)> p(i+1)$ for $p(i)$ even $(1\le i\le 2n-2)$. For a survey of related
material see Stanley \cite{Stanley}.
\par Setting $\ell=2$ in 
\eqref{Voronoi}, in case $p-1\nmid 2n$  we 
obtain 
\begin{equation}
    \label{Voronoi2}
H_{2n}\equiv -2^{2n-1}\sum_{j=(p+1)/2}^{p-1}j^{2n-1}\equiv 2^{2n-1}\sum_{j=1}^{(p-1)/2}j^{2n-1}\pmod*{p}.
\end{equation}
A related result is due to Emma Lehmer \cite{EmmaLehmer}, who showed in case $2n\not\equiv 2\bmod{(p-1)}$ that
$$H_{2n}\equiv -\sum_{j=1}^{(p-1)/2}(p-2j)^{2n-1}\bmod{p^2}.$$
Under the same assumption on $n$ and in the same spirit she showed that
$$\sum_{j=1}^{[p/4]}(p-4j)^{2n-1}\equiv -H_{2n}(2^{2n-1}+1)\bmod{p^2},\quad p>3.$$
In case $2n\equiv 2\bmod{(p-1)}$ both congruences still hold true modulo $p$.
\par For $|t|<\pi/2$ we have (see, e.g., \cite[Prop.\,1.17]{AIK})
$$\tan t =\sum_{n=1}^{\infty}T_n\frac{t^{2n-1}}{(2n-1)!},\,\text{~with~}\,T_n=(-4)^n H_{2n}.$$
The numbers $T_n$ are called  \emph{tangent numbers} 
and count the number of all alternating permutations of length $2n-1$ (see 
Entringer \cite{Entringer} or Knuth and Buckholtz \cite{KB}).
\par Let $p>3$ be a prime satisfying $p\equiv 3\pmod*{4}$ and 
put $m=(p-1)/2$. Then the class number $h(-p)$ of the quadratic field $\mathbb Q(\sqrt{-p})$ satisfies
$$h(-p)= \frac{1}{2-\kronecker{2}{p}}\sum_{j=1}^m \kronecker{j}{p}\equiv \frac{1}{2-2^m}\sum_{j=1}^mj^m\equiv \frac{2^{-m}}{2-2^m}
H_{\frac{p+1}{2}}
\equiv -2B_{\frac{p+1}{2}}\,\pmod*{p},$$
a congruence due to Cauchy. 
The first identity is an easy consequence of Dirichlet's class
number formula (see, for example, \cite[p.\,99]{AIK}),
the second congruence follows from \eqref{Voronoi2} on setting $n=(p+1)/4$.
The first identity implies that
$h(-p)\le (p-1)/2$ and thus the congruence uniquely determines
the value of $h(-p)$. 
For some related results we refer to
the recent preprint by Miná\v{c} et al.\,\cite{Minacetal}.

\subsection{The case $\ell$ is odd}\label{subsec:Earlierodd}
Despite the enormous literature on variations of Bernoulli numbers, we found only very little earlier
work on $\ell$-Genocchi numbers for
odd $\ell$. 
For example, in case $\ell=3$ and $p>3$, Emma Lehmer \cite{EmmaLehmer} showed that
$$H_{2n}\equiv -2\sum_{j=1}^{[p/3]}(p-3j)^{2n-1} \bmod{p^2}.$$
The deepest result we found gives a connection with functions related
to polylogarithms. Namely, given any integer $k$ consider the formal series
\[ l_k(s) = \sum_{n=1}^\infty \frac{s^n}{n^k}. \]
W\'ojcik \cite{wojcik} found an explicit formula for $l_k$ for $k\leq 0$, namely
\[ l_k(s) = -\frac{sR_n(s)}{(s-1)^{n+1}}, \]
where $n=-k$ and the $R_n\in\mathbb Z[s]$ are the classical Euler-Frobenius polynomials defined by the formula
\[ \frac{1-s}{e^t-s} = \sum_{n=0}^\infty \frac{R_n(s)}{(1-s)^n}\frac{t^n}{n!}. \]
The individual terms in the first sum of \eqref{generating} can be connected with the series $l_k(s)$, 
namely it can be shown (cf.\,Urbanowicz and Williams \cite[p.\,132]{UW}) that
\[ \frac{z}{e^t-z} = \sum_{n=1}^\infty (-1)^nl_{-n}(z)\frac{t^n}{n!}. \]
From \eqref{generating} we then infer on equating Taylor coefficients
\[ H_n=(-1)^n\sum_{a=1}^{\ell-1} l_{1-n}(\zeta_\ell^a). \]

\section{Experimental data}
In this section we provide some numerical examples for the densities of $G$-, $H^+$- and $H^-$-irregular primes, and $G$-irregular primes in arithmetic progressions and compare them with our conjectural predictions. As customary, $\pi(x)$ is the number of primes $p\leq x$. All data have been produced with SageMath \cite{Sage}.
\begin{table}[h!] 
\centering
\caption{The ratio $\mc P_G(x)/\pi(x)$ for $x=10^5$}
\label{tabG1}
\vspace{0.2cm}
\begin{tabular}{|c|c|c|}
\hline
$\ell$    & experimental & theoretical  \\ \hline \hline  
 2 & 0.661593 & 0.659776 \\
 3 & 0.635113  & 0.637095 \\ 
 5 & 0.657214 & 0.653807  \\ 
 7 & 0.660863  &  0.657010 \\ 
 11 & 0.660133  &  0.658736 \\ 
 13 & 0.659612  & 0.659045 \\ 
 17 & 0.662948  & 0.659358 \\ 
 19 & 0.657110  & 0.659444 \\  \hline
\end{tabular}
\end{table}

\begin{table}[h!] 
\centering
\caption{The ratios $\mc P_{H^+}(x)/\pi(x)$ and $\mc P_{H^-}(x)/\pi(x)$ for $x=10^5$}
\label{tabHpm}
\vspace{0.2cm}
\begin{tabular}{|c|cc||cc|}
\hline
& $\mc P_{H^+}(x)/\pi(x)$ & & $\mc P_{H^-}(x)/\pi(x)$ & \\ \hline
$\ell$    & experimental & theoretical & experimental & theoretical \\ \hline \hline  
 2 & 0.599145  & 0.596279 & 0.603315 & 0.603072 \\ 
 3 &  0.568390 & 0.571007 & 0.588198 & 0.591731 \\ 
 5 &  0.563699 & 0.559070 & 0.599458 & 0.600088 \\ 
 7 & 0.575271  & 0.571007 & 0.604671 & 0.601689 \\ 
 11 &  0.571726 & 0.571007 & 0.604462 & 0.602552 \\ 
 13 &  0.571518 & 0.569544 & 0.600292 & 0.602706 \\ 
 17 & 0.573499  & 0.570170 & 0.607173 & 0.602863 \\ 
 19 &  0.569537 & 0.571007 & 0.599875 & 0.602906 \\   \hline
\end{tabular}
\end{table}
\FloatBarrier

\begin{table}[h!] 
\centering
\caption{The ratio $\mc P_G(d,a)(x)/\pi(x;d,a)$ for $x=10^5$}
\label{tabG3}
\vspace{0.2cm}
\begin{tabular}{|c|cc|cc|}
\hline
$\ell$  & $d$ & $a$  & experimental & theoretical  \\ \hline \hline  
 3 & 3 & 1 & 0.816097 & 0.818547 \\
  & 3 & 2 & 0.454337 & 0.455642\\ 
  & 4 & 1 & 0.717473 & 0.727821\\
  & 4 & 3 & 0.552752 & 0.546368\\ \hline
 5 & 3 & 1 & 0.725396 & 0.723046\\
  & 3 & 2 & 0.589241 & 0.584569\\ 
  & 4 & 1 & 0.763970 & 0.761246\\
  & 4 & 3 & 0.550459 & 0.546368\\ \hline
 7 & 3 & 1 & 0.723311 & 0.725608\\
  & 3 & 2 & 0.598624 & 0.588412\\ 
  & 4 & 1 & 0.764178 & 0.767652\\
  & 4 & 3 & 0.557548 & 0.546368\\  \hline
\end{tabular}
\qquad
\begin{tabular}{|c|cc|cc|}
\hline
$\ell$  & $d$ & $a$  & experimental & theoretical  \\ \hline \hline  
 11 & 7 & 1 & 0.695580 & 0.700353 \\
  & 7 & 2 & 0.661802 & 0.650412 \\
  & 7 & 3 & 0.649291 & 0.650412 \\
  & 7 & 4 & 0.649917 & 0.650412 \\
  & 7 & 5 & 0.670559 & 0.650412 \\
  & 7 & 6 & 0.634279 & 0.650412 \\ \hline
   & 15 & 1 & 0.753962 & 0.770096 \\
  & 15 & 2 & 0.576314 & 0.568929 \\  
  & 15 & 4 & 0.706422 & 0.712620 \\
   & 15 & 7 & 0.724771 & 0.712620 \\
  & 15 & 8 & 0.573812 & 0.568929 \\
    & 15 & 11 & 0.649708 & 0.655144 \\
   & 15 & 13 & 0.712260 & 0.712620 \\
  & 15 & 14 & 0.583820 & 0.568929 \\  \hline
\end{tabular}
\end{table}
\FloatBarrier

\section*{Acknowledgement}
The authors thank Karl Dilcher, Bakir Farhi, 
Masanobu Kaneko and Bernd Kellner for kindly answering some
Bernoulli number related questions. 
A very helpful e-mail exchange with Efthymios Sofos led to
a simplified proof of Proposition \ref{casesbounds}.
Thanks to Antonella Perucca for her constant support and encouragement.

\end{document}